\newtheorem{theorem}{Theorem}[section]
\newtheorem{lemma}[theorem]{Lemma}
\newtheorem{corollary}[theorem]{Corollary}
\newtheorem{proposition}[theorem]{Proposition}
\theoremstyle{remark}
\newtheorem{remark}[theorem]{\bf{Remark}}
\theoremstyle{definition}
\newtheorem{assumption}[theorem]{Assumption}
\newtheorem{definition}[theorem]{Definition}
\newcommand\cbrk{\text{$]$\kern-.15em$]$}}
\newcommand\opar{\text{\,\raise.2ex\hbox{${\scriptstyle
|}$}\kern-.34em$($}}
\newcommand\cpar{\text{$)$\kern-.34em\raise.2ex\hbox{${\scriptstyle |}$}}\,}
\newcommand{\aint}{-\hspace{-0.38cm}\int}
\newcommand\bE{\mathbb{E}}
\newcommand\bH{\mathbb{H}}
\newcommand\bL{\mathbb{L}}
\newcommand\bN{\mathbb{N}}
\newcommand\bP{\mathbb{P}}
\newcommand\bR{\mathbb{R}}
\newcommand\bZ{\mathbb{Z}}
\newcommand\cF{\mathcal{F}}
\newcommand\cG{\mathcal{G}}
\newcommand\cI{\mathcal{I}}
\newcommand\cK{\mathcal{K}}
\newcommand\cM{\mathcal{M}}
\newcommand\cO{\mathcal{O}}
\newcommand\cS{\mathcal{S}}
\newcommand{\mysection}[1]{\section{#1}
\setcounter{equation}{0}}
\newcommand{\Ccinf}{C_{c}^{\infty}}
\newcommand{\R}{\mathbb{R}}
\begin{document}

\title[Evoution equations with space-time anisotropic non-local operators]
{A regularity theory for evolution equations with space-time anisotropic non-local operators in mixed-norm Sobolev spaces}

\author{Jae-Hwan Choi}
\address{School of Mathematics, Korea Institute for Advanced Study, 85 Hoegi-ro, Dongdaemun-gu, Seoul 02455, Republic of Korea} 
\email{jhchoi@kias.re.kr}

\author{Jaehoon Kang}
\address{Department of Applied Mathematics and Institute for Integrated Mathematical Sciences, Hankyong National University, 327 Jungang-ro, Anseong-si, Gyeonggi-do 17579, Republic of Korea} 
\email{jaehoon.kang@hknu.ac.kr}

\author{Daehan Park}
\address{Department of Mathematics, Kangwon National University, 1 Kangwondaehakgil, Chucheon-si, Gangwon-do, 24341, Republic of Korea} 
\email{daehanpark@kangwon.ac.kr}

\author{Jinsol Seo}
\address{School of Mathematics, Korea Institute for Advanced Study, 85 Hoegi-ro, Dongdaemun-gu, Seoul 02455, Republic of Korea} 
\email{seo9401@kias.re.kr}

\subjclass[2020]{Primary: 26A33, 35S10, 47G20,
Secondary: 30H25,  46B70,  46E35 }

\keywords{Space-time non-local equation, Anisotropic non-local operator, Caputo fractional derivative, Subordinate Brownian motion, Initial value problem, Generalized real interpolation, Trace theorem}

\begin{abstract}
In this article, we study the regularity of solutions to inhomogeneous time-fractional evolution equations involving anisotropic non-local operators in mixed-norm Sobolev spaces of variable order, with non-trivial initial conditions.
The primary focus is on space-time non-local equations where the spatial operator is the infinitesimal generator of a vector of independent subordinate Brownian motions, making it the sum of subdimensional non-local operators.
A representative example of such an operator is $(\Delta_{x})^{\beta_{1}/2}+(\Delta_{y})^{\beta_{2}/2}$.
We establish existence, uniqueness, and precise estimates for solutions in corresponding Sobolev spaces.
Due to singularities arising in the Fourier transforms of our operators, traditional methods involving Fourier analysis are not directly applicable. 
Instead, we employ a probabilistic approach to derive solution estimates. 
Additionally, we identify the optimal initial data space using generalized real interpolation theory.
\end{abstract}

\maketitle

\tableofcontents 

\mysection{Introduction}
\subsection{Motivations and goals}
Anisotropic non-local operators such as $\Delta^{\beta_{1}/2}_{x} + \Delta^{\beta_{2}/2}_{y}$ are important in describing phenomena that exhibit distinct behaviors in different coordinate directions.
Applications of anisotropic non-local operators appear frequently in various scientific fields; see, for instance, \cite{DWZ20anisotropic,de2011anisotropic,H08MRI}.
Additionally, there has been significant theoretical development and practical applications of space-time non-local operators.
Examples include the derivation of space-time fractional Fokker–Planck–Kolmogorov equations within fractional kinetics frameworks \cite{Z02,ZEN97} and the study of space-time non-local diffusion-advection equations \cite{GK25,P15diffadv}.

Motivated by these applications and developments, we study the following fractional evolution equation involving anisotropic spatial non-local operators:
\begin{align*}
\frac{1}{\Gamma(1-\alpha)} \int_{0}^{t} (t-s)^{-\alpha} \left( u(s,\vec{x}) - u_{0}(\vec{x}) \right) \mathrm{d}s = \int_{0}^{t} \left( \sum_{i=1}^{\ell} \phi_{i}(\Delta_{x_{i}}) u(s,\vec{x}) + f(s,\vec{x}) \right) \mathrm{d}s, \quad (t,\vec{x}) \in (0,T)\times \mathbb{R}^{d},
\end{align*}
where $\alpha \in (0,1)$ and the spatial dimension $d$ is composed of $\ell$ sub-dimensions $d_{1},\dots,d_{\ell}$, so that $\bR^{d} = \bR^{d_{1}}\times \dots \times \bR^{d_{\ell}}$. Each point $\vec{x}\in \bR^{d}$ can thus be represented as
\begin{equation}\label{eqn 07.11.19:58}
\vec{x} = (x_{1},\dots, x_{\ell}), \quad x_{i}=(x^{1}_{i},\dots,x^{d_{i}}_{i})\in \bR^{d_{i}}, \quad i=1,\dots,\ell.
\end{equation}
The spatial non-local operators $\phi_i(\Delta_{x_i})$ are defined by
\begin{align*}
\phi_{i}(\Delta_{x_{i}})g(\vec{x}) := b_i\Delta_{x_i}g(\vec{x})+\int_{\mathbb{R}^{d_i}}\left(g(x_1,\dots,x_{i-1},x_i+y_i,x_{i+1},\dots,x_{\ell})-g(\vec{x})-1_{|y_i|\leq 1}y_i\cdot\nabla_{x_i}g(\vec{x})\right)J_{\phi_i}(y_i)\,\mathrm{d}y_i,
\end{align*}
where $b_i\geq0$ and $\Delta_{x_{i}}$ denotes the standard $d_{i}$-dimensional Laplacian.
Differentiating in time, the equation can equivalently be expressed in terms of the Caputo fractional derivative $\partial^{\alpha}_{t}$ as follows:
\begin{equation}\label{eqn 07.05.17:01}
\partial^{\alpha}_{t}u(t,\vec{x}) =  \sum_{i=1}^{\ell} \phi_{i}(\Delta_{x_{i}})  u(t,\vec{x}) + f(t,\vec{x}), \quad (t,\vec{x}) \in (0,T)\times \bR^{d}, \quad u(0,\vec{x}) = u_{0}(\vec{x}).
\end{equation}
The objectives of this article are three-fold:
\begin{itemize}
\item Identify the optimal initial data space $X$ (trace and extension theorem for \eqref{eqn 07.05.17:01}).
\item Prove existence and uniqueness of solutions to \eqref{eqn 07.05.17:01} in $L_q((0,T);L_p)$.
\item Obtain maximal regularity estimates for solutions to \eqref{eqn 07.05.17:01}, specifically
\begin{align}\label{eqn 02.13.17:11}
\|\partial^{\alpha}_{t}u\|_{L_{q}((0,T);L_{p})} + \left\|\sum_{i=1}^{\ell} \phi_{i}(\Delta_{x_{i}})  u\right\|_{L{q}((0,T);L_{p})} \leq C \left( \|u_{0}\|_{X} + \|f\|_{L_{q}((0,T);L_{p})} \right), \quad 1<p,q<\infty.
\end{align}
\end{itemize}

\subsection{Historical Results}

In this subsection, we summarize some known results from the literature concerning the fractional evolution equations, and PDEs involving anisotropic non-local operators. 
For a more comprehensive historical overview beyond the scope of this article, we refer the reader to the introduction of \cite{CKP23}.

\emph{Evolution equations with time fractional derivative.}
The Sobolev regularity theory for fractional evolution equations initially focused on equations involving second-order differential operators.
For instance, I. Kim, K.-H. Kim, and S. Lim \cite{KKL17} studied fractional diffusion-wave-type equations (\textit{i.e.}, $\alpha \in (0,2)$) with second-order differential operators having continuous coefficients in mixed-norm Lebesgue spaces. 
B.-S. Han, K.-H. Kim, and D. Park \cite{HKP20weighted} investigated the weighted counterpart of \cite{KKL17}, which was subsequently extended to higher regularity by D. Park \cite{P23weighted}.
A particularly challenging research direction has involved relaxing the continuity assumptions on coefficients, significantly advanced by H. Dong and D. Kim. 
Detailed unweighted results can be found in \cite{DK19fractional,DK20div,DK2021}, while their weighted analogues are presented in \cite{DK21weighted,DK2023}.
Additionally, H. Dong and Y. Liu \cite{DY22fractional} provided weighted results specifically for $\alpha \in (1,2)$.

The regularity theory for fractional evolution equations involving non-local operators is a natural subsequent research direction. 
K.-H. Kim, D. Park, and J. Ryu \cite{KPR21nonlocal} explored evolution equations with time fractional derivatives and variable-order spatial non-local operators in mixed-norm Lebesgue spaces. 
The assumptions regarding spatial non-local operators were further relaxed by J. Kang and D. Park \cite{KP23}, who studied equations associated with infinitesimal generators of general Lévy processes.  
Additionally, H. Dong and Y. Liu \cite{DY23nonlocal} investigated fractional evolution equations involving space-dependent non-local operators.
We also refer readers to \cite{AL24abstract,P13,Z05abstract,Z06abstract} for alternative approaches to abstract Volterra equations.

One of the important research directions to study fractional evolution equations is the trace theorem.
D. Kim and K. Woo \cite{KW2025} provided trace theorems for fractional evolution equations involving second-order divergence and non-divergence operators. 
Additionally, J.-H. Choi, J. B. Lee, J. Seo, and K. Woo \cite{CLSW2023trace} established trace theorems for generalized time fractional equations within a generalized real interpolation framework. 
More references on this topic can be found in the introductions of \cite{CLSW2023trace,KW2025}.

\emph{PDEs with anisotropic non-local operators.}
We consider anisotropic non-local operators of the following form:
\begin{equation}\label{eqn 07.30.14:45}
\mathcal{L}_{\beta_{1},\beta_{2}}:=\Delta^{\beta_{1}/2}_{x^{1}} + \Delta^{\beta_{2}/2}_{x^{2}}, \quad (x^{1},x^{2})\in \bR^{2}, \quad \beta_{1},\beta_{2} \in (0,2).
\end{equation}
R. Mikulevi{\v{c}}ius and C. Phonsom \cite{mikulevivcius2017p, mikulevivcius2019cauchy} investigated the Sobolev regularity theory for parabolic PDEs involving scalable non-local operators. 
J.-H. Choi and I. Kim \cite{CK2020} extended these results specifically to the case of homogeneous parabolic PDEs.
A. de Pablo, F. Quir\'os, and A. Rodr\'iguez  studied the well-posedness and regularity of very weak solutions to anisotropic non-local parabolic PDEs defined by operators of the form
$$
\mathcal{L}u(x)=\frac{1}{2}\int_{\mathbb{R}^d}(u(x+y)+u(x-y)-2u(x))\nu(\mathrm{d}y),
$$
where $\nu$ is the L\'evy measure given by
$$
\nu(A):=\int_{\mathbb{S}^{d-1}}\int_0^{\infty}1_A(r\theta)\frac{\mathrm{d}r}{r^{1+\alpha}}\mu(\mathrm{d}\theta),
$$
and $\mu$ is a nondegenerate finite surface measure defined on $\mathbb{S}^{d-1}$.
In particular, if $d=2$ and
$$
\mu(\mathrm{d}\theta):=\epsilon_{(1,0)}(\mathrm{d}\theta)+\epsilon_{(0,1)}(\mathrm{d}\theta),
$$
where $\epsilon_{(1,0)}$ and $\epsilon_{(0,1)}$ denote Dirac measures centered at $(1,0)$ and $(0,1)$ respectively, then we have $\mathcal{L}=\mathcal{L}_{\alpha,\alpha}$.
Recently, H. Dong and J. Ryu \cite{DR2024} developed the weighted Sobolev regularity theory for elliptic and parabolic PDEs in $C^{1,\tau}$-domains associated with the operator $\mathcal{L}$.
However, these earlier works exclusively considered operators \eqref{eqn 07.30.14:45} with $\beta_1=\beta_2$.
J.-H. Choi, J. Kang, and D. Park \cite{CKP23} subsequently developed the Sobolev regularity theory for elliptic and parabolic PDEs with $\mathcal{L}_{\beta_1,\beta_2}$ for arbitrary $\beta_1,\beta_2\in(0,2)$.

Although not covered in this article, an interesting anisotropic nonlocal operator is given by
$$
Lu(x)=\int_{\mathbb{R}^d}\frac{u(x+y)-u(x)-\nabla u(x)\cdot y\mathbf{1}_{|y|\leq1}}{|y_1|^{d+\beta_1}+\cdots+|y_d|^{d+\beta_d}}\mathrm{d}y.
$$
L.A. Caffarelli, R. Leit\~ao, and J.M. Urbano developed the regularity theory for fully nonlinear integro-differential equations involving $L$.
A version of Caffarelli-Silvestre's extension problem \cite{C2007extension} associated with $L$ was explored by  R. Leit\~ao \cite{Leit20}. 
E.B. dos Santos, R. Leit\~ao \cite{SL21} studied the H\"older regularity theory for equations involving $L$-like operators.
R. Leit\~ao \cite{Leit23} also established Sobolev regularity theory for equations involving $L$, following the spirit of \cite{DK12elliptic}.

\subsection{Description of Approaches}

We now describe the approach employed in this article.
For parabolic PDEs involving anisotropic non-local operators of the form
\begin{equation*}
\partial_{t} u = \sum_{i=1}^{\ell} \phi_{i}(\Delta_{x_{i}})u  + f, \quad u(0)=0,
\end{equation*}
the solution $u$ admits the following representation:
\begin{align}\label{eqn 08.26.15:26}
u(t,\vec{x}) = \int_{0}^{t} \int_{\bR^{d}} p(t-s,\vec{x}-\vec{y})f(s,\vec{y}) \mathrm{d}\vec{y}\mathrm{d}s = \int_{0}^{t} \mathbb{E}[f(s,\vec{x}-\vec{X}_{t-s})] \mathrm{d}s,
\end{align}
where $p(t,\vec{x})$ is the transition density of the \emph{independent array of subordinate Brownian motion} $\vec{X}_{t}$.
One difficulty arises in proving the maximal regularity estimates \eqref{eqn 02.13.17:11}. 
A natural approach to obtain \eqref{eqn 02.13.17:11} is the Calder\'on-Zygmund approach based on the Fourier transform. Specifically, the Fourier transform of our spatial operator is given by
$$
\mathcal{F}_d\left[\sum_{i=1}^{\ell} \phi_{i}(\Delta_{x_{i}})u(t,\cdot)\right](\vec{\xi})=-\sum_{i=1}^{\ell}  \phi_{i}(|\xi_{i}|^{2})\mathcal{F}_d[u(t,\cdot)](\vec{\xi}) \quad \vec{\xi} = (\xi_{1},\dots,\xi_{\ell}) \in \mathbb{R}^{d_{1}}\times \cdots \times \mathbb{R}^{d_{\ell}}=\mathbb{R}^d.
$$
However, singularities arise when estimating derivatives of the symbol $m(\vec{\xi}):=-\sum_{i=1}^{\ell} \phi_{i}(|\xi_{i}|^{2})$ due to its \textit{coordinate-wise} symmetry.
Consequently, classical multiplier theorems such as those by Mikhlin and Marcinkiewicz are \textit{not applicable}, even in simpler parabolic PDE cases (see \cite[Remark 2.14]{CKP23}). 
Thus, directly applying existing results on time non-local equations such as \cite{AL24abstract,Z05abstract,Z06abstract} to establish \eqref{eqn 02.13.17:11} is nontrivial.
This motivates us to revisit and adapt the Calder\'on–Zygmund theory and seek a suitable representation analogous to \eqref{eqn 08.26.15:26} for the time non-local setting.

If we replace the time variable $t$ of the process $\vec{X}_{\cdot}$ by the inverse $R_{t}$ (with transition density $\varphi(t,r)$) of an $\alpha$-stable process, then the resulting transition density $q(t,\vec{x})$ of $\vec{X}_{R_{t}}$ serves as the fundamental solution to the fractional PDE
$$
\partial^{\alpha}_{t} q= \sum_{i=1}^{\ell} \phi_{i}(\Delta_{x_{i}})q.
$$
This allows us to represent the solution of the fractional PDE
$$
\partial_t^{\alpha}u=\sum_{i=1}^{\ell} \phi_{i}(\Delta_{x_{i}})u+f,\quad u(0)=0,
$$
as
\begin{equation}\label{eqn 12.06.14:38}
u(t,\vec{x}) = \int_{0}^{t} \int_{\bR^{d}} D^{1-\alpha}_{t}\mathbb{E}[f(s,\vec{x}-\vec{X}_{R_{t-s}})]\,\mathrm{d}s = \int_{0}^{t} \int_{\mathbb{R}^{d}} D^{1-\alpha}_{t}q(t-s,\vec{y})f(s,\vec{x}-\vec{y}) \,\mathrm{d}\vec{y}\,\mathrm{d}s,
\end{equation}
where $D^{1-\alpha}_{t}$ denotes the Riemann–Liouville fractional derivative of order $1-\alpha$, and the transition density $q(t,\vec{x})$ is given by the integral representation
\begin{equation}\label{eqn 07.29.16:40}
q(t,\vec{x}) = \int_{0}^{\infty} p(r,\vec{x}) \varphi(t,r) \,\mathrm{d}r,
\end{equation}
where $p(r,\vec{x})$ is the transition density of $\vec{X}_r$ and $\varphi(t,r)$ is the transition density of $R_t$.
For detailed derivations of \eqref{eqn 12.06.14:38} and \eqref{eqn 07.29.16:40}, we refer to Section \ref{sec 07.31.15:14} and Lemma \ref{lem 06.24.15:35}.

We now briefly outline our approach to establish \eqref{eqn 02.13.17:11}.
The proof of \eqref{eqn 02.13.17:11} consists of three parts:
\begin{itemize}
    \item Upper bound estimates for the heat kernel $q(t,\vec{x})$ defined by \eqref{eqn 07.29.16:40}: Section \ref{sec 07.31.15:14}.
    \item $BMO$-$L_{\infty}$ estimates of the solution $\sum_{i=1}^{\ell}\phi_i(\Delta_{x_i})u(t,\vec{x})$ in \eqref{eqn 12.06.14:38}: Section \ref{25.04.07.15.44}.
    \item Initial trace theorem: Section \ref{sec 01.17.17:00}.
\end{itemize}

The first part involves establishing appropriate upper bound estimates for $q(t,\vec{x})$. When $\alpha=1$ (the classical parabolic case), each component of the process $\vec{X}_t=(X^1_t,\dots,X^{\ell}_t)$ is independent, yielding
\begin{equation}
\label{25.04.07.15.22}
p(t,\vec{x})=p_1(t,x_1)\times\cdots\times p_{\ell}(t,x_{\ell}),
\end{equation}
where each $p_i(t,x_i)$ is the transition density of $X^i_t$.
The product structure \eqref{25.04.07.15.22} directly provides upper bound estimates for $p$ based on the known estimates for $p_i$.
However, since $q(t,\vec{x})$ is the transition density of $(X^{1}_{R_{t}}, \dots, X^{\ell}_{R_{t}})$, whose component processes are no longer independent, we cannot easily expect an estimate of the form
\begin{align*}
\left| q(t,\vec{x}) \right| \leq G_{1}(t,x_{1}) \times \cdots \times G_{\ell}(t,x_{\ell}),
\end{align*}
where $G_{i}(t,x_{i})$ suitably bounds the transition density $q_{i}(t,x_{i})$ of $X^{i}_{R_{t}}$.
Therefore, obtaining proper upper bound estimates for $q$ requires a detailed analysis of the representation \eqref{eqn 07.29.16:40}, combined with existing estimations of $p$ from \cite{CKP23}.
Furthermore, since the given process $\vec{X}_{t}$ lacks global symmetry in $\mathbb{R}^{d}$, there is no straightforward criterion to derive estimates for $q$ from the estimates for $p$.
These complexities necessitate more sophisticated estimations compared to those previously considered in the literature (see, \textit{e.g.}, \cite{KP23,KPR21nonlocal}).

The second part is to establish the $BMO$–$L_{\infty}$ estimate of solutions, specifically
\begin{equation}\label{eqn 07.29.17:13}
\left\| \sum_{i=1}^{\ell}\phi_i(\Delta_{x_i})  u \right\|_{BMO} \lesssim \|f\|_{L_{\infty}}.
\end{equation}
From the representation \eqref{eqn 12.06.14:38}, we have
$$
\sum_{i=1}^{\ell}\phi_i(\Delta_{x_i}) u(t,\vec{x})=\int_{0}^{t} \int_{\mathbb{R}^{d}} D^{1-\alpha}_{t}\sum_{i=1}^{\ell}\phi_i(\Delta_{x_i}) q(t-s,\vec{y})f(s,\vec{x}-\vec{y}) \,\mathrm{d}\vec{y}\,\mathrm{d}s=:\mathcal{G}f(t,\vec{x}).
$$
Thus, the estimate \eqref{eqn 07.29.17:13} is equivalent to
\begin{equation}
\label{25.04.05.09.14}
\left\| \mathcal{G}f\right\|_{BMO} \lesssim \|f\|_{L_{\infty}}.
\end{equation}
If the kernel of the operator $\mathcal{G}$, defined by
\begin{equation}
\label{25.04.08.09.16}
(t,\vec{x})\mapsto D_t^{1-\alpha}\sum_{i=1}^{\ell}\phi_i(\Delta_{x_i})q(t,\vec{x}),
\end{equation}
were integrable on $(0,T)\times\mathbb{R}^d$, then the estimate \eqref{25.04.05.09.14} would follow immediately.
However, integrability of the kernel \eqref{25.04.08.09.16} is generally not expected (see Lemma \ref{lem 06.08.14:52}).
Consequently, to obtain \eqref{25.04.05.09.14}, we utilize detailed upper bound estimates for the heat kernel $q$.
Unlike the parabolic case \cite{CKP23}, in which the heat kernel $p$ admits coordinate-wise separable estimates \eqref{25.04.07.15.22}, our kernel $q$ involves intricate, intertwined estimates across all coordinates. Hence, even when analyzing the mean oscillation of $\mathcal{G}f$ in a single coordinate $x_{i}$, we cannot ignore the influence of other variables. To isolate behavior along the $x_{i}$-direction while still accounting for this dependency, we derive the following natural bound:
$$
\left| \int_{\bR^{d-d_{i}}} q(t,\vec{x}) \, \mathrm{d}\hat{x}_i  \right| \leq G_{i}(t,x_{i}) \quad \left( \hat{x}_i = (x_{1},\dots,x_{i-1},x_{i+1},\dots,x_{\ell}) \right),
$$
reflecting that the integral above represents the transition density of the component $X^{i}_{R_{t}}$.

The third part involves establishing the initial trace theorem, identifying the optimal initial data space. To achieve this, we rely on the trace results presented in \cite{CLSW2023trace,KW2025}. Specifically, if $q\in(1,\infty)$ and $\alpha\in(1/q,1]$, then it is known that
\begin{equation}
\label{25.04.08.11.07}
X=(H_p^{\vec{\phi},2},L_p)_{\frac{1}{\alpha q},q},
\end{equation}
where $X$ denotes the optimal initial data space appearing in \eqref{eqn 02.13.17:11}. A more explicit characterization of \eqref{25.04.08.11.07} is desirable for broader applicability. However, to the authors' best knowledge, even for the specific case $\vec{\phi}(\lambda)=(\lambda_1^{1/2},\lambda_2)$—that is, $H_p^{\vec{\phi},2}=W_p^{1,2}(\mathbb{R}\times\mathbb{R})$—such a detailed characterization remains unresolved. The primary difficulty arises from classical Littlewood–Paley operators, which are optimized for isotropic rather than anisotropic differentiability.
To overcome this, we introduce a modified Littlewood–Paley operator $\Delta_j^{\vec{\phi}}$ tailored to the symbol $\sum_{i=1}^{\ell}\phi_i(|\xi_i|^2)$, thereby capturing the anisotropic differentiability effectively. 
Additionally, following the approach in \cite{KW2025}, we extend the trace theorem to the range $\alpha\in(0,1/q]$.

\subsection{Notations}
We finish the introduction with some notations. We use $``:="$ or $``=:"$ to denote a definition. The symbol $\bN$ denotes the set of positive integers and $\bN_0:=\bN\cup\{0\}$. Also we use $\bZ$ to denote the set of integers. For any $a\in \bR$, we denote $\lfloor a \rfloor$ the greatest integer less than or equal to $a$. As usual $\bR^d$ stands for the Euclidean space of points $x=(x^1,\dots,x^d)$. We set
$$
B_r(x):=\{y\in \bR^{d} : |x-y|<r\}, \quad \bR_+^{d+1} := \{(t,x)\in\bR^{d+1} : t>0 \}.
$$
For $i=1,\ldots,d$,
multi-indices $\sigma=(\sigma_{1},\ldots,\sigma_{d})$,
 and functions $u(t,x)$ we set
$$
\partial_{x^{i}}u=\frac{\partial u}{\partial x^{i}}=D_{i}u.
$$
We also use the notation $D_{x}^{m}$ for arbitrary partial derivatives of
order $m$ with respect to $x$.
For an open set $\cO$ in $\bR^{d}$ or $\bR^{d+1}$, $C_c^\infty(\cO)$ denotes the set of infinitely differentiable functions with compact support in $\cO$. By 
$\cS=\cS(\bR^d)$ we denote the  class of Schwartz functions on $\bR^d$.
$\cS'=\cS'(\bR^d)$ denotes the dual space of $\mathcal{S}$.
For $p\geq1$, by $L_{p}$ we denote the set
of complex-valued Lebesgue measurable functions $u$ on $\R^{d}$ satisfying
\[
\left\Vert u\right\Vert _{L_{p}}:=\left(\int_{\R^{d}}|u(x)|^{p} \mathrm{d}x\right)^{1/p}<\infty.
\]
Generally, for a given measure space $(X,\mathcal{M},\mu)$, $L_{p}(X,\cM,\mu;F)$
denotes the space of all $F$-valued $\mathcal{M}^{\mu}$-measurable functions
$u$ so that
\[
\left\Vert u\right\Vert _{L_{p}(X,\cM,\mu;F)}:=\left(\int_{X}\left\Vert u(x)\right\Vert _{F}^{p}\mu(\mathrm{d}x)\right)^{1/p}<\infty,
\]
where $\mathcal{M}^{\mu}$ denotes the completion of $\cM$ with respect to the measure $\mu$.
We also denote by $L_{\infty}(X,\cM,\mu;F)$ the space of all $\mathcal{M}^{\mu}$-measurable functions $f : X \to F$ with the norm
$$
\|f\|_{L_{\infty}(X,\cM,\mu;F)}:=\inf\left\{r\geq0 : \mu(\{x\in X:\|f(x)\|_F\geq r\})=0\right\}<\infty.
$$
If there is no confusion for the given measure and $\sigma$-algebra, we usually omit the measure and the $\sigma$-algebra. 
For any given function $f:X \to \bR$, we denote its inverse (if it exists) by $f^{-1}$. Also, for $\nu \in \bR\setminus\{-1\}$ and nonnegative function $f$, we denote $f^{\nu}(x)=(f(x))^{\nu}$.
We denote $a\wedge b := \min\{a,b\}$ and $a\vee b:=\max\{a,b\}$. By $\cF$ and $\cF^{-1}$ we denote the $d$-dimensional Fourier transform and the inverse Fourier transform respectively, i.e.
$$
\cF[f](\xi):=\int_{\bR^d} \mathrm{e}^{-i\xi\cdot x} f(x)\mathrm{d}x, \quad \cF^{-1}[f](\xi):=\frac{1}{(2\pi)^d}\int_{\bR^d} \mathrm{e}^{i\xi\cdot x} f(x)\mathrm{d}x. 
$$
For any $a,b>0$, we write $a\simeq b$ if there is a constant $c>1$ independent of $a,b$ such that $c^{-1}a\leq b\leq ca$. We use
$$
\sum_{i=1}^{k}a_{i} , \quad \prod_{i=1}^{k} a_{i}
$$
to denote the summation and the product of indexed numbers. If the given index set is not well-defined, we define the summation as $0$ and the product as $1$.   For any complex number $z$, we denote $\Re[z]$ and $\Im[z]$ as the real and imaginary parts of $z$. If we write $C=C(\dots)$, this means that the constant $C$ depends only on what are in the parentheses. The constant $C$ can differ from line to line.

\mysection{Main Results} \label{Main result sec}

\subsection{Definition of Non-Local Operators}
We begin by introducing the mathematical formulation of the nonlocal operators in our main equation \eqref{eqn 07.05.17:01}. 

$\bullet$ \textbf{Definition of time-nonlocal operators}
\\
For $\alpha>0$ and $\varphi\in L_{1}((0,T))$, the \textit{Riemann-Liouville fractional integral} of the order $\alpha$ is defined as
$$
I_{t}^{\alpha}\varphi:=\frac{1}{\Gamma(\alpha)}\int_{0}^{t}(t-s)^{\alpha-1}\varphi(s)\mathrm{d}s, \quad 0\leq t\leq T.
$$
For convenience, we set $I^0\varphi:=\varphi$.
Let $n\in \bN$ be such that  $\alpha \in [n-1, n)$. 
Suppose that  $\varphi(t)$ is  $(n-1)$-times continuously differentiable and that $\left(\frac{\mathrm{d}}{\mathrm{d}t}\right)^{n-1} I_t^{n-\alpha}  \varphi$ is absolutely continuous on $[0,T]$.
Then the \textit{Riemann-Liouville fractional derivative} $D_{t}^{\alpha}$ and the \textit{Caputo fractional derivative} $\partial_{t}^{\alpha}$ of order $\alpha$ are defined as
\begin{equation}
                          \label{eqn 4.15}
D_{t}^{\alpha}\varphi:=\left(\frac{\mathrm{d}}{\mathrm{d}t}\right)^{n}\left(I_{t}^{n-\alpha}\varphi\right),
\end{equation}
and
\begin{align*}
\partial_{t}^{\alpha}\varphi= D_{t}^{\alpha} \left(\varphi(t)-\sum_{k=0}^{n-1}\frac{t^{k}}{k!}\varphi^{(k)}(0)\right).
\end{align*}
 Using  Fubini's theorem, we obtain the following composition property of fractional integrals: for any $\alpha,\beta\geq 0$,
 \begin{equation}
                                                          \label{eqn 4.15.3}
I^{\alpha}_tI^{\beta}_t \varphi=I^{\alpha+\beta}_t \varphi, \quad
\text{$(a.e.)$} \,\, t\leq T.
\end{equation}
It is important to note that if  $\varphi(0)=\varphi'(0)=\cdots=\varphi^{(n-1)}(0)=0$, then $D^{\alpha}_t\varphi=\partial^{\alpha}_t \varphi$.
Furthermore, from \eqref{eqn 4.15.3} and \eqref{eqn 4.15}, we obtain the following fundamental properties: for any  $\alpha,\beta\geq 0$,
\begin{equation*}
D^{\alpha}_tD^{\beta}_t=D^{\alpha+\beta}_t, \quad D^{\alpha}_t I_{t}^{\beta} \varphi=
D_{t}^{\alpha-\beta}\varphi,
\end{equation*}
where for $\alpha<0$, we define $D_t^{\alpha}\varphi:=I_t^{-\alpha}\varphi$.
Additionally, if
$\varphi(0)=\varphi^{(1)}(0)=\cdots = \varphi^{(n-1)}(0)=0$ 
then by definition of $\partial^{\alpha}_{t}$, we have
\begin{equation*}
I^{\alpha}_{t}\partial^{\alpha}_{t}u=I^{\alpha}_{t}D^{\alpha}_{t}\varphi=\varphi.
\end{equation*}

$\bullet$ \textbf{Definition of spatial non-local operators} 
\\
We now define the spatial nonlocal operator $\vec{\phi} \cdot \Delta_{\vec{d}}$. 
Let $B=(B_t)_{t\geq0}$ be a $d$-dimensional Brownian motion, and let $S = (S_{t})_{t\geq0}$ be a real-valued increasing L\'evy  process that is independent of $B_{t}$, and starts at $0$ with the Laplace transform given by
$$
\bE[\mathrm{e}^{-\lambda S_t}]:=\int_{\Omega} \mathrm{e}^{-\lambda S_t(\omega)} \,\bP(\mathrm{d}\omega)=\mathrm{e}^{-t\phi(\lambda)},\quad \forall (t,\lambda)\in[0,\infty)\times\bR_+.
$$
The process $X=(B_{S_t})_{t\geq0}$ is called a \textit{subordinate Brownian motion} (SBM) with subordinator $S$, and its infinitesimal generator is defined as
$$
\phi(\Delta_{x})f(x)=\phi(\Delta)f(x):=\lim_{t \downarrow 0} \frac{ \bE[f(x+X_t)] -f(x)}{t}.
$$
It follows that $S$ is a subordinator if and only if the Laplace exponent $\phi$ of $S$ is a \textit{Bernstein function}, meaning that $\phi$ is a nonnegative continuous function on $[0,\infty)$ satisfying
$$
(-1)^{n}D^{n}\phi(\lambda) \leq 0 \quad \forall\, \lambda > 0, \quad \forall\, n\in \mathbb{N}.
$$
Furthermore, $\phi$ admits the following representation (see, \textit{e.g.}, \cite[Theorem 3.2]{schbern})
\begin{align}
\label{23.03.08.12.52}
    \phi(\lambda)=b\lambda+\int_{0}^{\infty}\left(1-\mathrm{e}^{-\lambda t}\right)\,\mu(\mathrm{d}t) \quad \left( \int_0^{\infty}\left(1\wedge t\right)\mu(\mathrm{d}t)<\infty \right).
\end{align}
Here, the constant $b\geq0$ is called the \emph{drift} of $\phi$, and $\mu$ is referred to as the \emph{L\'evy measure} of $\phi$.
According to \cite[Theorem 31.5]{sato1999}, $\phi(\Delta_x)$ has the following equivalent representations:
\begin{align}\label{fourier200408}
\phi(\Delta_x)f(x) &= b\Delta_x f + \int_{\mathbb{R}^d}\left(f(x+y)-f(x)-\nabla_xf(x)\cdot y\mathbf{1}_{|y|\leq1}\right)J(y)\mathrm{d}y,\nonumber\\
&= \mathcal{F}^{-1}[-\phi(|\cdot|^2)\mathcal{F}[f]](x),
\end{align}
where $J(y) := j(|y|)$ and the function $j:(0,\infty)\rightarrow(0,\infty)$ is given by
\begin{equation*}
J(y) = j(|y|)=\int_{(0,\infty)} (4\pi t)^{-d/2} \mathrm{e}^{-|y|^2/(4t)} \mu(\mathrm{d}t).
\end{equation*}
Recalling \eqref{eqn 07.11.19:58}, for any vector $\vec{x} \in \bR^{\vec{d}} := \bR^{d_{1}} \times \cdots \times \bR^{d_{\ell}}$, we use the notation
$$
\vec{x}=(x_{1},\dots,x_{\ell}), \quad x_{i}=(x^{1}_{i},\dots,x^{d_{i}}_{i})\in \bR^{d_{i}} \quad (i=1,\dots,\ell).
$$
Let $X^{1},\dots,X^{\ell}$ be independent $d_{i}$-dimensional ($i=1,\dots,\ell$) SBMs with characteristic exponents $\phi_{i}(|\cdot|^{2})$, respectively. 
We say that $\vec{X} = (X^{1},\cdots,X^{\ell})$ is an \emph{independent array of SBM (IASBM)}. 
Then, $\vec{X}$ is an $\bR^d$-valued L\'evy process, and its characteristic exponent is given by
\begin{equation*}
    \bE[\mathrm{e}^{i\vec{\xi}\cdot \vec{X}_t}]=\prod_{i=1}^{\ell}\exp\left(-t\phi_i(|\xi_i|^2)\right),\quad \vec{\xi}=(\xi_1,\cdots,\xi_{\ell})\in\bR^{d}.
\end{equation*}
Since each component of $\vec{X}$ is independent, the infinitesimal generator of $\vec{X}$ can be expressed as
\begin{align*}
\lim_{t\downarrow0}\frac{\bE[f(\vec{x}+\vec{X}_t)]-f(\vec{x})}{t}
=\sum_{i=1}^{\ell}\phi_{i}(\Delta_{x_i})f(x)=:(\vec{\phi}\cdot\Delta_{\vec{d}})f(x),
\end{align*}
where $\Delta_{\vec{d}}:=(\Delta_{x_1},\Delta_{x_2},\cdots,\Delta_{x_{\ell}})$, $\Delta_{x_i}$ is the Laplacian operator on $\bR^{d_i}$. 
Using the vector notations
$$
d=\vec{d}\cdot\vec{1}=\sum_{i=1}^{\ell}d_{i}, \quad \vec{1}:=(1,\cdots,1),\,\vec{d}:=(d_1,\cdots,d_{\ell})\in \bN^{\ell}, \quad \vec{\phi}=(\phi_{1},\cdots,\phi_{\ell}),
$$
we express the operator $\vec{\phi}\cdot\Delta_{\vec{d}}$ as follows (recalling \eqref{fourier200408})
\begin{align*}
    (\vec{\phi}\cdot\Delta_{\vec{d}})f(\vec{x})&=\vec{b}\cdot\Delta_{\vec{d}}f(\vec{x})+\int_{\bR^{d}}(f(\vec{x}+\vec{y})-f(\vec{x})-\nabla_{\vec{x}}f(\vec{x})\cdot \vec{y} \, \mathbf{1}_{|\vec{y}|\leq1})\vec{1}\cdot\vec{J}(\mathrm{d}\vec{y})\\
    &=\cF^{-1}_{d}\left[-\sum_{i=1}^{\ell}\phi_{i}(|\xi_i|^2)\cF_{d}[f]\right](\vec{x}).\nonumber
\end{align*}
Here, $\vec{b}=(b_{1},\cdots,b_{\ell})$ is the drift of $\vec{\phi}$, and $\vec{J}(\mathrm{d}\vec{y})$ is a vector of L\'evy measures defined by
\begin{align}
    \label{23.03.04.19.19}
    \vec{J}(\mathrm{d}\vec{y})=(J_1(\mathrm{d}\vec{y}),\cdots,J_{\ell}(\mathrm{d}\vec{y})),\quad J_i(\mathrm{d}\vec{y}):=J_i(y_i)\mathrm{d}y_{i}\epsilon_0^i(\mathrm{d}y_1,\cdots,\mathrm{d}y_{i-1},\mathrm{d}y_{i+1},\cdots,\mathrm{d}y_{\ell}),
\end{align}
where $J_i(y_i)$ is the jumping kernel of $\phi_{i}(\Delta_{x_{i}})$ and $
\epsilon_0^i$ is the centered Dirac measure in $\mathbb{R}^{d-d_i}$.

We now introduce the assumptions imposed on $\vec{\phi}$.
A function $f : (0,\infty) \to (0,\infty)$ is said to satisfy \emph{weak lower scaling condition} denoted by \textbf{WLS($c_{0},\delta_{0}$)}, if it holds that
\begin{equation}\label{eqn 07.15.14:41}
c_{0} \left(\frac{R}{r}\right)^{\delta_{0}}\leq \frac{f(R)}{f(r)}, \qquad 0<r<R<\infty.
\end{equation}

\begin{assumption}[\textbf{Weak Lower Scaling Codition}]\label{23.03.03.16.04}
There exist constants $\delta_0\in (0,1]$ and $c_{0}>0$ such that the Bernstein functions $\phi_1,\cdots,\phi_{\ell}$ satisfy \textbf{WLS($c_{0},\delta_{0}$)}, \textit{i.e.},
\begin{equation*}
\begin{gathered}
c_{0} \left(\frac{R}{r}\right)^{\delta_{0}}\leq\min\left(\frac{\phi_{1}(R)}{\phi_{1}(r)},\cdots,\frac{\phi_{\ell}(R)}{\phi_{\ell}(r)}\right), \qquad 0<r<R<\infty.
\end{gathered}
\end{equation*}
\end{assumption}

\begin{remark}\label{rmk 11.05.17:16}
\begin{enumerate}[(i)]
\item If $\phi_{i}(r)=r^{\alpha_{i}}$ with $\alpha_{i}\in(0,1]$ ($i=1,\dots,\ell$), then Assumption \ref{23.03.03.16.04} holds with $c_{0}=1$, and $\delta_{0}=\min\{\alpha_{1},\dots,\alpha_{\ell}\}$.
Consequently, this assumption covers vectors consisting of stable processes and Brownian motions.
Furthermore, combining Assumption \ref{23.03.03.16.04} with the concavity of $\phi$ yields the following two-sided bound:
 \begin{equation}
\label{phiratio}
c_{0}\left(\frac{R}{r}\right)^{\delta_0}\leq \frac{\phi_{i}(R)}{\phi_{i}(r)} \leq \frac{R}{r} , \qquad 0<r<R<\infty.
\end{equation}
\item If $\phi_{i}$ satisfies \textbf{WLS($c_{0},\delta_{0}$)}, then the following inequalities hold: 
\begin{align} \label{int phi}
\int_{\lambda^{-1}}^\infty r^{-1}(\phi_{i}(r^{-2}))^{\nu}\mathrm{d}r   \nonumber
& \leq c^{-\nu}_{0} \lambda^{-2\delta_{0}\nu} \left(\phi_{i}(\lambda^{2})\right)^{\nu} \int_{\lambda^{-1}}^{\infty}  r^{-1-2\delta_{0}\nu} \mathrm{d}r  \nonumber
\\
& \leq \frac{c^{-\nu}_{0}}{2\delta_{0}\nu} (\phi_{i}(\lambda^2))^{\nu} \quad \forall\, i=1,\dots,\ell \quad \forall\, \lambda,\nu>0.
\end{align}
\item Let $f: (0,\infty) \to (0,\infty)$ be an increasing function with an inverse function $f^{-1}$, and suppose that $f$ satisfies \textbf{WLS($c_{0},\delta_{0}$)}. 
Applying \eqref{eqn 07.15.14:41} with $f^{-1}(R)$ and $f^{-1}(r)$ in place of $R$ and $r$ ($0<r<R$), we obtain
$$
c_{0} \left(  \frac{f^{-1}(R)}{f^{-1}(r)} \right)^{\delta_{0}} \leq \frac{R}{r},
$$
which implies
$$
\frac{f^{-1}(R)}{f^{-1}(r)} \leq c^{-1/\delta_{0}}_{0} \left( \frac{R}{r} \right)^{1/\delta_{0}}.
$$
Since each \( \phi_{i} \) is a nontrivial Bernstein function, we have \( \phi'_{i}(\lambda) > 0 \) for all \( \lambda > 0 \).
Consequently, by \eqref{phiratio}, the inverse function \( \phi^{-1}_{i} \) satisfies the following inequality:
\begin{align}\label{eqn 07.15.14:49}
\left( \frac{R}{r} \right) \leq \frac{\phi^{-1}_{i}(R)}{\phi^{-1}_{i}(r)} \leq c^{-1/\delta_{0}}_{0} \left( \frac{R}{r} \right)^{1/\delta_{0}} \quad \forall\, 0<r<R<\infty.
\end{align}
\end{enumerate}
\end{remark}

\subsection{Solution spaces}
Next, we introduce Sobolev spaces associated with the operator $\vec{\phi}\cdot\Delta_{\vec{d}}$ will serve as our solution spaces.
\begin{definition}\label{defn defining}
Let $1<p,q<\infty$, $\gamma\in \bR$, and $0<T<\infty$. 
For a Schwartz function $u$, we define $(1-\vec{\phi}\cdot\Delta_{\vec{d}})^{\gamma/2}u$ as
$$
\cF[(1-\vec{\phi}\cdot\Delta_{\vec{d}})^{\gamma/2}u](\vec{\xi}):=\left( 1-\cF[\vec{\phi}\cdot\Delta_{\vec{d}}](\vec{\xi}) \right)^{\gamma/2}\cF[u](\vec{\xi}):= \left( 1+\sum_{i=1}^{\ell}\phi_{i}(|\xi_{i}|^{2}) \right)^{\gamma/2}\cF[u] (\vec{\xi}).
$$
For the well-definedness of $(1-\vec{\phi}\cdot\Delta_{\vec{d}})^{\gamma/2}u$ in $\cS'(\bR^d)$, we refer the reader to \cite{farkaspsi}.

(i) The space $H^{\vec{\phi},\gamma}_{p}=H^{\vec{\phi},\gamma}_{p}(\bR^{d})$ is a closure of $\cS(\bR^d)$ under the norm
$$
\|u\|_{H^{\vec{\phi},\gamma}_{p}}:= \|(1-\vec{\phi}\cdot\Delta_{\vec{d}})^{\gamma/2}u\|_{L_{p}}<\infty.
$$

(ii) We denote $C^{\infty}_{p}([0,T]\times \bR^{d})$ as a collection of functions $u(t,x)$ such that $D^{m}_{x}u \in C([0,T];L_{p})$ for all $m\in \bN_{0}$. 

(iii) The space $H^{\vec{\phi},\gamma}_{q,p}(T)$ is a closure of $C^{\infty}_{p}([0,T]\times\bR^{d})$ under the norm
$$
\|u\|_{H^{\vec{\phi},\gamma}_{q,p}(T)}:= \left(\int_{0}^{T} \|u(t,\cdot)\|^{q}_{H^{\vec{\phi},\gamma}_{p}} \mathrm{d}t\right)^{1/q} <\infty.
$$
We also denote $L_{q,p}(T):=H^{\vec{\phi},0}_{q,p}(T)$.
\end{definition}

First, we list some properties of the space $H_p^{\vec{\phi},\gamma}$ whose proof is contained in \cite[Lemma 2.6]{CKP23}.
\begin{proposition}\label{H_p^phi,gamma space}
Let $1<p<\infty$ and $\gamma\in\bR$.

(i) The space $H_p^{\vec{\phi},\gamma}$ is a Banach space.

(ii) For any $\mu\in\bR$, the map $(1-\vec{\phi}\cdot\Delta_{\vec{d}})^{\mu/2}$ is an isometry from $H^{\vec{\phi},\gamma}_{p}$ to $H^{\vec{\phi},\gamma-\mu}_{p}$.

(iii) If $\mu>0$, then we have continuous embeddings $H_p^{\vec{\phi},\gamma+\mu}\subset H_p^{\vec{\phi},\gamma}$ in the sense that
\begin{equation*}
\|u\|_{H_p^{\vec{\phi},\gamma}}\leq C \|u\|_{H_p^{\vec{\phi},\gamma+\mu}},
\end{equation*}
where the constant $C$ is independent of $u$. 

(iv) For any $u\in H^{\vec{\phi},\gamma+2}_{p}$, we have
\begin{equation*}\label{eqn 03.25.15:03}
\|u\|_{H_p^{\vec{\phi},\gamma+2}} \simeq \left(\|u\|_{H^{\vec{\phi},\gamma}_p}+\|(\vec{\phi}\cdot\Delta_{\vec{d}})u\|_{H^{\vec{\phi},\gamma}_p}\right) \simeq \left(\|u\|_{H^{\vec{\phi},\gamma}_p}+\sum_{i=1}^{\ell}\|\phi_{i}(\Delta_{x_{i}})u\|_{H^{\vec{\phi},\gamma}_p}\right).
\end{equation*}
In particular, if $\phi_1(\lambda)=\cdots=\phi_{\ell}(\lambda)=\lambda^{\beta}$, then $H_{p}^{\vec{\phi},2}$ becomes the classical Bessel potential space $H_{p}^{2\beta}$. 
\end{proposition}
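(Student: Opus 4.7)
The plan is to verify the four claims in order, relying on the Fourier/semigroup description of $(1-\vec{\phi}\cdot\Delta_{\vec{d}})^{s/2}$ together with the probabilistic structure of the IASBM $\vec{X}_t$.

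Parts (i) and (ii) should be cheap. Part (i) is automatic, since by definition $H_p^{\vec{\phi},\gamma}$ is the completion of a normed space. For (ii) I would first verify the group law
$$(1-\vec{\phi}\cdot\Delta_{\vec{d}})^{s/2}(1-\vec{\phi}\cdot\Delta_{\vec{d}})^{t/2}=(1-\vec{\phi}\cdot\Delta_{\vec{d}})^{(s+t)/2}\qquad(s,t\in\bR)$$
on Schwartz functions by comparing Fourier symbols $(1+\sum_i\phi_i(|\xi_i|^2))^{(s+t)/2}$, and then specialize to $s=\mu$, $t=\gamma-\mu$, extending from $\cS(\bR^d)$ to $H_p^{\vec{\phi},\gamma}$ by density.

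For (iii), set $L=\vec{\phi}\cdot\Delta_{\vec{d}}$ and let $P_tu(\vec{x})=\bE[u(\vec{x}+\vec{X}_t)]$ be the semigroup generated by $L$. Since $\vec{X}_t$ is an $\bR^d$-valued L\'evy process, $P_t$ is convolution with a probability measure, hence an $L_p$-contraction for $1\leq p\leq\infty$. The Bochner subordination identity
$$(1-L)^{-\mu/2}=\frac{1}{\Gamma(\mu/2)}\int_{0}^{\infty}t^{\mu/2-1}\mathrm{e}^{-t}P_t\,\mathrm{d}t,\qquad \mu>0,$$
then gives $\|(1-L)^{-\mu/2}u\|_{L_p}\leq\|u\|_{L_p}$, and combining with part (ii) yields $\|u\|_{H_p^{\vec{\phi},\gamma}}\leq\|u\|_{H_p^{\vec{\phi},\gamma+\mu}}$.

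For (iv), the first equivalence follows from part (ii), the triangle inequality, and the operator identity $L(1-L)^{-1}=-\mathrm{Id}+(1-L)^{-1}$ together with the $L_p$-contractivity from (iii). The upper bound in the second equivalence is immediate from $Lu=\sum_i\phi_i(\Delta_{x_i})u$ and the triangle inequality. The hard part will be the matching lower bound
$$\|\phi_i(\Delta_{x_i})u\|_{H_p^{\vec{\phi},\gamma}}\leq C\|u\|_{H_p^{\vec{\phi},\gamma+2}}\qquad(i=1,\dots,\ell),$$
which by (ii) reduces to $L_p$-boundedness of the Fourier multiplier with symbol $\phi_i(|\xi_i|^2)/(1+\sum_j\phi_j(|\xi_j|^2))$. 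Classical Mikhlin--Marcinkiewicz multiplier theorems are unavailable here for the coordinate-wise reasons pointed out in the introduction and in \cite[Remark~2.14]{CKP23}. I would instead pass to the convolution-kernel representation
$$\phi_i(\Delta_{x_i})(1-L)^{-1}u(\vec{x})=\int_{\bR^d}K_i(\vec{y})u(\vec{x}+\vec{y})\,\mathrm{d}\vec{y},\qquad K_i(\vec{y})=\int_0^{\infty}\mathrm{e}^{-t}\phi_i(\Delta_{y_i})p(t,\vec{y})\,\mathrm{d}t,$$
and verify the standard Calder\'on--Zygmund size and smoothness conditions on $K_i$ using the heat-kernel bounds for $p$ established in \cite{CKP23}. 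This is essentially the route taken in \cite[Lemma~2.6]{CKP23}.
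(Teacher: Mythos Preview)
The paper does not prove this proposition in-text; it simply refers to \cite[Lemma~2.6]{CKP23}. Your outline for (i)--(iii) and the first equivalence in (iv) is correct and standard.

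For the hard inequality in the second equivalence of (iv) you overcomplicate matters by misreading the introduction. The warning there that ``Mikhlin and Marcinkiewicz are not applicable'' concerns the \emph{space-time} symbol arising from the parabolic solution operator, where the time-frequency variable interacts with $\sum_i\phi_i(|\xi_i|^2)$; that is the content of \cite[Remark~2.14]{CKP23}. The purely spatial symbol $m(\xi)=\phi_i(|\xi_i|^2)\big/(1+\sum_j\phi_j(|\xi_j|^2))$ \emph{does} satisfy the Marcinkiewicz hypotheses: from $|\phi_j^{(n)}(\lambda)|\leq C_n\lambda^{-n}\phi_j(\lambda)$ one obtains
\[
|D_{\xi^{j_1}}\cdots D_{\xi^{j_k}}m(\xi)|\leq C\prod_{r=1}^k|\xi^{j_r}|^{-1},
\]
exactly as in the proof of Proposition~\ref{25.02.07.16.36} (see \eqref{eqn 02.10.13:42} and the appeal to \cite[Corollary~6.2.5]{grafakos2014classical} immediately after). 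A one-line multiplier argument therefore gives $\|\phi_i(\Delta_{x_i})(1-\vec{\phi}\cdot\Delta_{\vec{d}})^{-1}\|_{L_p\to L_p}<\infty$. Your Calder\'on--Zygmund kernel route would also succeed, but it is more labor than the problem requires.
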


Now we introduce a Besov space which plays an essential role for the class of initial data. 
We choose a function $\Psi$ from the Schwartz class $\mathcal{S}(\mathbb{R})$, whose one-dimensional Fourier transform $\mathcal{F}_{1}[\Psi]$ is nonnegative, supported within the set $[-2,-1/2]\cup[1/2,2]$. We also assume that
$$
\sum_{j\in\mathbb{Z}}\mathcal{F}_1[\Psi](2^{-j}\lambda)=1 \quad \forall\, \lambda\in\mathbb{R}\setminus\{0\}.
$$
Let $m_{\vec{\phi}}(\xi):=\phi_1(|\xi_1|^2)+\cdots+\phi_{\ell}(|\xi_{\ell}|^2)$ and let
$$
\Psi_j^{\vec{\phi}}(x):=\mathcal{F}_d^{-1}[\mathcal{F}_1[\Psi](2^{-j}m_{\vec{\phi}})](x).
$$
We define the Littlewood-Paley projection operators $\Delta_j^{\vec{\phi}}$ ($j \in \mathbb{Z}$) and $S_0^{\vec{\phi}}$ as 
$$
\Delta_j^{\vec{\phi}}f(x):=\int_{\mathbb{R}^{d}}\Psi_j^{\vec{\phi}}(y)f(x-y)\mathrm{d}y,\quad S_0^{\vec{\phi}}f(x):=\int_{\mathbb{R}^{d}}\Phi^{\vec{\phi}}(y)f(x-y)\mathrm{d}y,
$$
where $\Phi^{\vec{\phi}}(x):=\sum_{j\leq 0}\Psi_j^{\vec{\phi}}(x)$, respectively.

\begin{definition}
    Let $\gamma\in\mathbb{R}$, and $p,q\in[1,\infty)$.
    The space $B_{p,q}^{\vec{\phi},\gamma}=B_{p,q}^{\vec{\phi},\gamma}(\mathbb{R}^d)$ is defined as closure of $\mathcal{S}(\mathbb{R}^d)$ under the norm
    $$
    \|f\|_{B_{p,q}^{\vec{\phi},\gamma}}:=\|S^{\vec{\phi}}_{0}f\|_{L_p}+\left(\sum_{j=1}^{\infty}2^{\gamma q}\|\Delta^{\vec{\phi}}_{j}f\|_{L_p}^{q}\right)^{1/q}.
    $$
\end{definition}

\begin{definition}\label{def 01.06.16:16}
Let $\alpha \in (0,1)$, $1<p,q < \infty$, $\gamma \in \bR$, and $T<\infty$.
\begin{enumerate}[(i)]
    \item We say that $u\in \mathbb{H}^{\alpha,\vec{\phi},\gamma}_{q,p,0}(T)$ if $u\in H^{\vec{\phi},\gamma}_{q,p}(T)$ and there exists $f\in H^{\vec{\phi},\gamma}_{q,p}(T)$ such that
\begin{align}\label{eqn 01.02.17:53}
& \int_{0}^{T}\int_{\mathbb{R}^{d}} \left(I^{1-\alpha}_{t}(1-\vec{\phi}\cdot\Delta_{\vec{d}})^{\gamma/2}u(t,x) \right) \partial_{t}\left((1-\vec{\phi}\cdot\Delta_{\vec{d}})^{-\gamma/2} \eta(t,x) \right) \mathrm{d}x\mathrm{d}t     \nonumber
\\
=& - \int_{0}^{T}\int_{\mathbb{R}^{d}} \left((1-\vec{\phi}\cdot\Delta_{\vec{d}})^{\gamma/2}f(t,x)\right) \left((1-\vec{\phi}\cdot\Delta_{\vec{d}})^{-\gamma/2}\eta(t,x)\right) \mathrm{d}x\mathrm{d}t   
\end{align}
holds for every $\eta \in C^{\infty}_{c}([0,T)\times \mathbb{R}^{d})$.

    \item For such $u$ and $f$ satisfying \eqref{eqn 01.02.17:53}, we say that  $\partial_{t}I^{1-\alpha}_{t}u=f =\partial^{\alpha}_{t}u$.   We define the norm $\|\cdot \|_{\mathbb{H}^{\alpha,\vec{\phi},\gamma}_{q,p,0}(T)}$ as
$$
\|u \|_{\mathbb{H}^{\alpha,\vec{\phi},\gamma}_{q,p,0}(T)}:= \|\partial^{\alpha}_{t}u\|_{H^{\vec{\phi},\gamma}_{q,p}(T)} + \|u\|_{H^{\vec{\phi},\gamma}_{q,p}(T)}.
$$

    \item We denote
\begin{align*}
U^{\alpha,\vec{\phi},\gamma}_{p,q} =: 
\begin{cases}
H^{\vec{\phi},\gamma}_{p} & \text{if} \quad \alpha q \leq 1
\\
\vspace{-4mm}
\\
B^{\vec{\phi},\gamma+2-2/(\alpha q)}_{p,q} & \text{if} \quad \alpha q > 1,
\end{cases}
\end{align*}
and we say that $u\in \mathbb{H}^{\alpha,\vec{\phi},\gamma}_{q,p}(T)$ if $u\in H^{\vec{\phi},\gamma}_{q,p}(T)$ and if there exists $u_{0}\in U^{\alpha,\vec{\phi},\gamma}_{p,q}$ such that $u-u_0\in \mathbb{H}_{q,p,0}^{\alpha,\vec{\phi},\gamma}(T)$. We denote $\partial_{t}I^{1-\alpha}_{t} (u-u_{0}) = \partial^{\alpha}_{t}u$.
\end{enumerate}
\end{definition}
For the well-definedness of \eqref{eqn 01.02.17:53}, we refer the reader to \cite{farkaspsi}. Note that \eqref{eqn 01.02.17:53} holds for every
$$
\eta \in \bigcup_{\gamma\in \mathbb{R}} (1-\vec{\phi}\cdot \Delta_{\vec{d}})^{\gamma/2}C^{\infty}_{c}([0,T) \times \mathbb{R}^{d}).
$$ 

The function spaces introduced in Definition \ref{def 01.06.16:16} serve as the solution space of our target equation.
The following proposition establishes the key properties of these spaces.
\begin{proposition}\label{lem 01.13.15:58}
Let $\alpha \in  (0,1)$, $1<p,q<\infty$, $\gamma\in \bR$, and $0<T<\infty$. 

\begin{enumerate}[(i)]
    \item Suppose that $\alpha \in (0,1/q)$ and $u_{0} \in H^{\vec{\phi},\gamma}_{p}$.
    Then $u_0\in\mathbb{H}_{q,p}^{\alpha,\vec{\phi},\gamma}(T)$.
    Moreover,
\begin{align}\label{eqn 01.07.14:11}
\|\partial_{t}I^{1-\alpha}_{t}u_{0}\|_{H^{\vec{\phi},\gamma}_{q,p}(T)} \leq C(\alpha,q)T^{1/q-\alpha} \|u_{0}\|_{H^{\vec{\phi},\gamma}_{p}}.
\end{align}
    \item Suppose that $\alpha \in (0,1/q)$, then $\mathbb{H}^{\alpha,\vec{\phi},\gamma}_{q,p,0}(T) = \mathbb{H}^{\alpha,\vec{\phi},\gamma}_{q,p}(T)$.
    \item Suppose that $\alpha \in [1/q,1)$ and $u_{0} \in H^{\vec{\phi},\gamma}_{p}$. If $\partial_{t}I^{1-\alpha}_{t}u_{0}$ exists in $H^{\vec{\phi},\gamma}_{q,p}(T)$, then $u_{0} \equiv 0$.
    \item Suppose that $\alpha \in [1/q,1)$. Then for any $u \in \mathbb{H}^{\alpha,\vec{\phi},\gamma}_{q,p}(T)$, there exists unique $u_{0}\in 
U^{\alpha,\vec{\phi},\gamma}_{p,q}$ such that $u-u_{0} \in \mathbb{H}^{\alpha,\vec{\phi},\gamma}_{q,p,0}(T)$.
\end{enumerate}
\end{proposition}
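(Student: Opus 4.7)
The plan is to handle the four assertions in turn, exploiting the explicit form of the Riemann--Liouville integral of a time-independent function. For (i), view $u_{0}\in H^{\vec\phi,\gamma}_{p}$ as a function of $(t,x)$ constant in $t$. Then directly $I^{1-\alpha}_{t}u_{0}=\frac{t^{1-\alpha}}{\Gamma(2-\alpha)}u_{0}$, which is absolutely continuous in $t$ with pointwise derivative $\frac{t^{-\alpha}}{\Gamma(1-\alpha)}u_{0}$. I would plug this into the left-hand side of \eqref{eqn 01.02.17:53} and integrate by parts in $t$: the boundary term at $t=T$ vanishes because $\eta\in C^{\infty}_{c}([0,T)\times\bR^{d})$, and the one at $t=0^{+}$ vanishes because $t^{1-\alpha}\to 0$ as $\alpha<1$. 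This shows the identity \eqref{eqn 01.02.17:53} holds with $f(t,x)=\frac{t^{-\alpha}}{\Gamma(1-\alpha)}u_{0}(x)$. Computing
$$
\|f\|_{H^{\vec\phi,\gamma}_{q,p}(T)}^{q}=\frac{\|u_{0}\|_{H^{\vec\phi,\gamma}_{p}}^{q}}{\Gamma(1-\alpha)^{q}}\int_{0}^{T}t^{-\alpha q}\,\mathrm{d}t,
$$
which is finite precisely because $\alpha q<1$, yields \eqref{eqn 01.07.14:11} with $C(\alpha,q)=\Gamma(1-\alpha)^{-1}(1-\alpha q)^{-1/q}$.

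Part (ii) is then immediate: the inclusion $\mathbb{H}^{\alpha,\vec\phi,\gamma}_{q,p,0}(T)\subset\mathbb{H}^{\alpha,\vec\phi,\gamma}_{q,p}(T)$ follows by taking $u_{0}=0$ in the definition, while for the reverse inclusion, if $u\in\mathbb{H}^{\alpha,\vec\phi,\gamma}_{q,p}(T)$ then $U^{\alpha,\vec\phi,\gamma}_{p,q}=H^{\vec\phi,\gamma}_{p}$ (since $\alpha q\le 1$), so there exists $u_{0}\in H^{\vec\phi,\gamma}_{p}$ with $u-u_{0}\in\mathbb{H}^{\alpha,\vec\phi,\gamma}_{q,p,0}(T)$. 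Part (i) places $u_{0}$ itself in $\mathbb{H}^{\alpha,\vec\phi,\gamma}_{q,p,0}(T)$, and by linearity of the space we conclude $u\in\mathbb{H}^{\alpha,\vec\phi,\gamma}_{q,p,0}(T)$.

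For (iii), suppose $f:=\partial_{t}I^{1-\alpha}_{t}u_{0}$ exists in $H^{\vec\phi,\gamma}_{q,p}(T)$. I would test \eqref{eqn 01.02.17:53} against tensor test functions $\eta(t,x)=\psi(t)(1-\vec\phi\cdot\Delta_{\vec d})^{\gamma/2}\chi(x)$ with $\psi\in C^{\infty}_{c}([0,T))$ and $\chi\in C^{\infty}_{c}(\bR^{d})$, and repeat the integration by parts from part (i). This forces, in the sense of distributions,
$$
f(t,x)=\frac{t^{-\alpha}}{\Gamma(1-\alpha)}\,u_{0}(x).
$$
Taking $H^{\vec\phi,\gamma}_{p}$-norms in $x$ and computing the $L_{q}((0,T))$-norm in $t$, the factor $\int_{0}^{T}t^{-\alpha q}\,\mathrm{d}t$ diverges under the assumption $\alpha q\ge 1$, so finiteness of $\|f\|_{H^{\vec\phi,\gamma}_{q,p}(T)}$ forces $\|u_{0}\|_{H^{\vec\phi,\gamma}_{p}}=0$.

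For (iv), existence is built into the definition, so only uniqueness is at stake. If $u_{0},u_{1}\in U^{\alpha,\vec\phi,\gamma}_{p,q}$ both satisfy $u-u_{i}\in\mathbb{H}^{\alpha,\vec\phi,\gamma}_{q,p,0}(T)$, then $v:=u_{1}-u_{0}$ (constant in $t$) lies in $\mathbb{H}^{\alpha,\vec\phi,\gamma}_{q,p,0}(T)$, so $\partial_{t}I^{1-\alpha}_{t}v$ exists in $H^{\vec\phi,\gamma}_{q,p}(T)$. To apply (iii) I need $v\in H^{\vec\phi,\gamma}_{p}$: this is automatic when $\alpha q=1$ since $U^{\alpha,\vec\phi,\gamma}_{p,q}=H^{\vec\phi,\gamma}_{p}$, and when $\alpha q>1$ it follows from the continuous embedding $B^{\vec\phi,\gamma+2-2/(\alpha q)}_{p,q}\hookrightarrow H^{\vec\phi,\gamma}_{p}$, which is a standard consequence of the $\Delta^{\vec\phi}_{j}$-Littlewood--Paley characterization (loss of $2-2/(\alpha q)>0$ units of $\vec\phi$-smoothness). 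Applying (iii) to $v$ then yields $v\equiv 0$. The main obstacle I anticipate is the careful bookkeeping in (iii), namely justifying that the distributional identity $f(t,x)=\frac{t^{-\alpha}}{\Gamma(1-\alpha)}u_{0}(x)$ genuinely follows from \eqref{eqn 01.02.17:53} — this is where one must handle the operator $(1-\vec\phi\cdot\Delta_{\vec d})^{\pm\gamma/2}$ carefully so that the $x$- and $t$-dependences separate cleanly — together with verifying the anisotropic Besov--Bessel embedding invoked in (iv).
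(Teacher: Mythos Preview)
Your proposal is correct and follows essentially the same route as the paper. Parts (i), (ii), and (iv) match the paper's arguments almost verbatim; the only notable difference is in (iii), where the paper approximates $u_{0}$ by Schwartz functions $u_{0n}$, passes to the limit in the weak identity \eqref{eqn 01.02.17:53}, and then invokes a duality bound to force $u_{0n}=0$, whereas you bypass the approximation entirely by reading off $f(t,x)=\frac{t^{-\alpha}}{\Gamma(1-\alpha)}u_{0}(x)$ directly from \eqref{eqn 01.02.17:53} via tensor test functions and conclude from the divergence of $\int_{0}^{T}t^{-\alpha q}\,\mathrm{d}t$. Your variant is slightly more direct and avoids the delicate point in the paper's duality step (where the threshold $N$ a priori depends on the test function $\eta$); both approaches ultimately rest on the same explicit identity $\partial_{t}I^{1-\alpha}_{t}u_{0}=\frac{t^{-\alpha}}{\Gamma(1-\alpha)}u_{0}$, so there is no substantive divergence in strategy.
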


Using Proposition \ref{lem 01.13.15:58}, we define the norm in $\mathbb{H}^{\alpha,\vec{\phi},\gamma}_{q,p}(T)$ as follows.

\begin{definition}
Let $\alpha \in  (0,1)$, $1<p,q<\infty$, $\gamma\in \bR$, and $0<T<\infty$.
\begin{enumerate}[(i)]
 \item We define the norm in $\mathbb{H}^{\alpha,\vec{\phi},\gamma}_{q,p}(T)$ as 
\begin{align*}
\|u\|_{\mathbb{H}^{\alpha,\vec{\phi},\gamma}_{q,p}(T)} =: 
\begin{cases} 
\|u\|_{\mathbb{H}^{\alpha,\vec{\phi},\gamma}_{q,p,0}(T)}  & \text{if} \quad \alpha q <1,
\\
\|\partial^{\alpha}_{t}u\|_{H^{\vec{\phi},\gamma}_{q,p}(T)} + \|u\|_{H^{\vec{\phi},\gamma}_{q,p}(T)} + \|u_{0}\|_{U^{\alpha,\vec{\phi},\gamma}_{q,p}} & \text{if} \quad \alpha q \geq 1.
\end{cases}
\end{align*}
Since $u_{0} \in U^{\alpha,\vec{\phi},\gamma}_{q,p}$ can be uniquely chosen by Proposition \ref{lem 01.13.15:58} (iv), the norm $\|\cdot\|_{\mathbb{H}^{\alpha,\vec{\phi},\gamma}_{q,p}(T)}$ is well-defined.

\item We say that $u\in \mathbb{H}^{1,\vec{\phi},\gamma}_{q,p}(T)$ if $u\in H^{\vec{\phi},\gamma}_{q,p}(T)$ and there exist $f\in H^{\vec{\phi},\gamma}_{q,p}(T)$ and $u_{0} \in B^{\vec{\phi},\gamma+2-2/q}_{p,q}$ such that $u(0,\cdot) = u_{0}$, $\partial_{t} u = f$ in usual (distribution) sense. The norm $\|\cdot\|_{\mathbb{H}^{1,\vec{\phi},\gamma}_{q,p}(T)}$ is defined as
$$
\|u\|_{\mathbb{H}^{1,\vec{\phi},\gamma}_{q,p}(T)} =: \|\partial_{t}u\|_{H^{\vec{\phi},\gamma}_{q,p}(T)} + \|u\|_{H^{\vec{\phi},\gamma}_{q,p}(T)} + \|u_{0}\|_{U^{\alpha,\vec{\phi},\gamma}_{q,p}}.
$$
If $u_{0}\equiv 0$, then we say that $u\in \mathbb{H}^{1,\vec{\phi},\gamma}_{q,p,0}(T)$. 
\end{enumerate}
\end{definition}

\begin{proposition}\label{prop 03.21.16:12}
Let $\alpha \in  (0,1]$, $1<p,q<\infty$, $\gamma\in \bR$, and $0<T<\infty$. 
\begin{enumerate}[(i)]

    \item The spaces $H_{q,p}^{\vec{\phi},\gamma}(T)$ and $\bH_{q,p}^{\alpha,\vec{\phi},\gamma}(T)$ are Banach spaces.
    \item The space $\bH_{q,p,0}^{\alpha,\vec{\phi},\gamma}(T)$ is a closed subspace of $\bH_{q,p}^{\alpha,\vec{\phi},\gamma}(T)$.
    \item For any $\nu\in\bR$, 
    $$
    (1-\vec{\phi}\cdot\Delta_{\vec{d}})^{\nu/2}:\bH_{q,p}^{\alpha,\vec{\phi},\gamma}(T)\cap H_{q,p}^{\vec{\phi},\gamma+2}(T)\to\bH_{q,p}^{\alpha,\vec{\phi},\gamma-\nu}(T)\cap H_{q,p}^{\vec{\phi},\gamma-\nu+2}(T)
    $$
    is an isometry, where the norm is naturally given as
	$$
	\|u\|_{\mathbb{H}^{\alpha,\vec{\phi},s}_{q,p}(T) \cap H^{\vec{\phi},s+2}_{q,p}(T)} = \|u\|_{\mathbb{H}^{\alpha,\vec{\phi},s}_{q,p}(T)} + \|u\|_{H^{\vec{\phi},s+2}_{q,p}(T)}.
	$$
Furthermore, for any  $u\in\bH_{q,p}^{\alpha,\vec{\phi},\gamma}(T) \cap H_{q,p}^{\vec{\phi},\gamma+2}(T)$
\begin{equation*}
(1-\vec{\phi}\cdot\Delta_{\vec{d}})^{\nu/2}\partial^{\alpha}_t  u=\partial^{\alpha}_t  (1-\vec{\phi}\cdot\Delta_{\vec{d}})^{\nu/2} u,
\end{equation*}
where $u_{0}\in U^{\alpha,\vec{\phi},\gamma}_{q,p}$ is an element which makes $u$ satisfies the definition $u\in \mathbb{H}^{\alpha,\vec{\phi},\gamma}_{q,p}(T)$.
    \item $C_c^\infty(\bR^{d+1}_+)$ is dense in $\bH_{q,p,0}^{\alpha,\vec{\phi},\gamma}(T) \cap H_{q,p}^{\vec{\phi},\gamma+2}(T)$.
    \item $C_p^\infty([0,T]\times\mathbb{R}^{d})$ is dense in $\bH_{q,p}^{\alpha,\vec{\phi},\gamma}(T)\cap H_{q,p}^{\vec{\phi},\gamma+2}(T)$.
\end{enumerate}
\end{proposition}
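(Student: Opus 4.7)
My plan is to proceed item by item. For (i), the space $H^{\vec{\phi},\gamma}_{q,p}(T)$ is by construction the Bochner space $L_{q}((0,T); H^{\vec{\phi},\gamma}_{p})$, which is Banach because $H^{\vec{\phi},\gamma}_{p}$ is Banach by Proposition \ref{H_p^phi,gamma space}(i). For $\bH^{\alpha,\vec{\phi},\gamma}_{q,p}(T)$, I take a Cauchy sequence $\{u_{n}\}$; completeness of $H^{\vec{\phi},\gamma}_{q,p}(T)$ produces limits $u_{n}\to u$ and $\partial^{\alpha}_{t}u_{n}\to f$, and when $\alpha q\geq 1$ the trace summand of the norm produces $u_{0,n}\to u_{0}$ in the Banach space $U^{\alpha,\vec{\phi},\gamma}_{p,q}$ (either a Bessel-potential space or a Besov space, both complete). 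Passing to the limit in the weak identity \eqref{eqn 01.02.17:53} written for $u_{n}-u_{0,n}$ --- permissible because the Riemann--Liouville integral $I^{1-\alpha}_{t}$ is bounded on $L_{q}(0,T)$ by Young's convolution inequality --- yields $\partial^{\alpha}_{t}(u-u_{0})=f$, proving (i). For (ii), the case $\alpha q<1$ is trivial by Proposition \ref{lem 01.13.15:58}(ii); in the case $\alpha q\geq 1$ the trace map $u\mapsto u_{0}$ is continuous from $\bH^{\alpha,\vec{\phi},\gamma}_{q,p}(T)$ to $U^{\alpha,\vec{\phi},\gamma}_{p,q}$, so $\bH^{\alpha,\vec{\phi},\gamma}_{q,p,0}(T)=\{u_{0}=0\}$ is closed.

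For (iii), Proposition \ref{H_p^phi,gamma space}(ii) already makes $(1-\vec{\phi}\cdot\Delta_{\vec{d}})^{\nu/2}$ an isometry $H^{\vec{\phi},\gamma}_{p}\to H^{\vec{\phi},\gamma-\nu}_{p}$ pointwise in $t$, so integrating in $t$ transfers the isometry to the $H^{\vec{\phi},\cdot}_{q,p}(T)$ level. For the commutation with $\partial^{\alpha}_{t}$, I would substitute $(1-\vec{\phi}\cdot\Delta_{\vec{d}})^{\nu/2}\eta$ in place of $\eta$ in the weak identity \eqref{eqn 01.02.17:53} for $u-u_{0}$. Since $(1-\vec{\phi}\cdot\Delta_{\vec{d}})^{\mu/2}$ is a self-adjoint Fourier multiplier --- self-adjoint on $\mathcal{S}$ by Plancherel and extending to the $L_{p}$--$L_{p'}$ duality by density --- its factors can be moved freely between the two members of each integrand in \eqref{eqn 01.02.17:53}, and the resulting identity is exactly the weak formulation for $\partial^{\alpha}_{t}(1-\vec{\phi}\cdot\Delta_{\vec{d}})^{\nu/2}u=(1-\vec{\phi}\cdot\Delta_{\vec{d}})^{\nu/2}f$, giving the desired commutation.

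For (v), I would reduce to (iv) by subtracting an extension $v$ of the initial data: after smoothing $u_{0}$ spatially to a Schwartz function $\tilde u_{0}$ (dense in $U^{\alpha,\vec{\phi},\gamma}_{p,q}$), take $v(t,\vec{x}):=\chi(t)\tilde u_{0}(\vec{x})$ with $\chi\in C^\infty_c(\bR)$ equal to $1$ near $0$. Then $v\in C^\infty_p([0,T]\times \bR^d)\cap H^{\vec{\phi},\gamma+2}_{q,p}(T)$ and $u-v\in\bH^{\alpha,\vec{\phi},\gamma}_{q,p,0}(T)\cap H^{\vec{\phi},\gamma+2}_{q,p}(T)$ up to the approximation error in $u_{0}$. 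Part (iv) is then carried out by a three-stage mollification: (a) multiply by a time cutoff $\chi_{\delta}(t)$ supported in $[\delta,T]$; (b) extend by $0$ to $t<0$ and convolve in time with a mollifier $\rho_{\ep}\in C^\infty_c((-\ep,0))$; (c) convolve in space with a standard mollifier and multiply by a spatial cutoff $\psi_{R}$.

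The main obstacle lies in step (a): one must show $\chi_{\delta}u\to u$ in $\bH^{\alpha,\vec{\phi},\gamma}_{q,p,0}(T)$ as $\delta\to 0$, which requires controlling the commutator $\partial^{\alpha}_{t}(\chi_{\delta}u)-\chi_{\delta}\partial^{\alpha}_{t}u$ arising from the nonlocality of the Caputo derivative. This will exploit the vanishing-trace property encoded in $\bH^{\alpha,\vec{\phi},\gamma}_{q,p,0}(T)$ --- quantitatively, the identity $u=I^{\alpha}_{t}\partial^{\alpha}_{t}u$ together with H\"older's inequality yields $\|I^{1-\alpha}_{t}u(t,\cdot)\|_{H^{\vec{\phi},\gamma}_{p}}\lesssim t^{1-1/q}\|\partial^{\alpha}_{t}u\|_{L_{q}((0,t);H^{\vec{\phi},\gamma}_{p})}$, which combined with dominated convergence drives the commutator error to zero. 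Once (a) is in place, steps (b) and (c) are routine because $I^{1-\alpha}_{t}$ commutes with convolutions by time kernels supported in $(-\infty,0]$ and $\vec{\phi}\cdot\Delta_{\vec{d}}$ commutes with spatial mollification.
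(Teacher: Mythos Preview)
Your treatment of (i)--(iii) and (v) is essentially the paper's.

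For (iv), step (a) is the real obstacle, and your sketch does not close it. The estimate you cite bounds $I^{1-\alpha}_{t}u(t,\cdot)$ pointwise, but what must go to zero in $H^{\vec\phi,\gamma}_{q,p}(T)$ is $\partial^{\alpha}_{t}(\chi_{\delta}u)-f$. For $t>2\delta$ (where $\chi_{\delta}\equiv1$) one has
\[
\partial^{\alpha}_{t}(\chi_{\delta}u)(t)-f(t)\;=\;\frac{\alpha}{\Gamma(1-\alpha)}\int_{0}^{2\delta}(t-s)^{-\alpha-1}\bigl(1-\chi_{\delta}(s)\bigr)\,u(s)\,\mathrm{d}s,
\]
and the singular kernel $(t-s)^{-\alpha-1}$, together with the mere $L_{q}$-in-time regularity of $u$, does not produce an $L_{q}\bigl((2\delta,T);H^{\vec\phi,\gamma}_{p}\bigr)$ bound tending to zero with $\delta$; in particular there is no $\delta$-independent majorant, so dominated convergence is not available. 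The interval $(0,2\delta)$ is no easier. Whether a more refined commutator estimate could eventually succeed, the argument as stated is incomplete.

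The paper bypasses the cutoff altogether. After reducing to $\gamma=0$ via (iii), it extends $v:=(1-\vec\phi\cdot\Delta_{\vec d})^{\gamma/2}u$ by zero outside $[0,T]$ and mollifies in time with $\eta_{\varepsilon}\in C_{c}^{\infty}\bigl((\varepsilon,2\varepsilon)\bigr)$, a \emph{forward} shift. Because both this kernel and the kernel of $I^{1-\alpha}_{t}$ are supported in $[0,\infty)$, the two convolutions commute on the zero-extension, giving $\partial^{\alpha}_{t}(\eta_{\varepsilon}*v)=\eta_{\varepsilon}*\partial^{\alpha}_{t}v\to\partial^{\alpha}_{t}v$ strongly; and $\eta_{\varepsilon}*v$ vanishes on $[0,\varepsilon)$ for free. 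Your anti-causal kernel $\rho_{\varepsilon}\in C_{c}^{\infty}((-\varepsilon,0))$ shifts backward, which is precisely why you need the separate cutoff; moving its support to $(\varepsilon,2\varepsilon)$ merges your steps (a) and (b) and removes the difficulty.
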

The proofs of Proposition \ref{lem 01.13.15:58} and Proposition \ref{prop 03.21.16:12} are provided in Section \ref{23.03.08.12.24}.

\subsection{Statement of Main result}

Here is the main result of this article.

\begin{theorem} \label{main theorem}
Let $\alpha\in(0,1]$, $1<p,q<\infty$, $\gamma \in \bR$, and $0<T<\infty$. Suppose that $\vec{\phi}=(\phi_1,\cdots,\phi_{\ell})$ is a vector of Bernstein functions satisfying Assumption \ref{23.03.03.16.04} with drift $\vec{b}_{0}=(b_{01},\dots,b_{0\ell})$ and vector of L\'evy measures $\vec{J}(\mathrm{d}\vec{y})$ defined in \eqref{23.03.04.19.19}.
Then for any $u_{0}\in B^{\vec{\phi},\gamma+2-2/(\alpha q)}_{q,p}$ and $f\in H_{q,p}^{\vec{\phi},\gamma}(T)$, the equation
\begin{equation}\label{mainequation1}
\partial^{\alpha}_{t}u(t,\vec{x}) =  \vec{\phi}\cdot \Delta_{\vec{d}} \, u(t,\vec{x}) + f(t,\vec{x}), \quad (t,\vec{x}) \in (0,T)\times \bR^{d}, \quad u(0,\vec{x}) =  \mathbf{1}_{\alpha q>1}u_{0} 
\end{equation}
admits a unique solution $u$ in the class $\bH_{q,p}^{\alpha,\vec{\phi},\gamma}(T)\cap H_{q,p}^{\vec{\phi},\gamma+2}(T)$ ($u \in \mathbb{H}^{\alpha,\vec{\phi},\gamma}_{q,p,0}(T)\cap H^{\vec{\phi},\gamma+2}_{q,p}(T)$ if $\alpha q \leq 1$) and  we have

\begin{equation} \label{mainestimate1}
\|u\|_{\bH_{q,p}^{\alpha,\vec{\phi},\gamma}(T)}+\|u\|_{H_{q,p}^{\vec{\phi},\gamma+2}(T)}\leq C \left( \|f\|_{H_{q,p}^{\vec{\phi},\gamma}(T)} + \|\mathbf{1}_{\alpha q>1}u_{0}\|_{U^{\alpha,\vec{\phi},\gamma}_{q,p}} \right),
\end{equation}
where $C=C(\alpha,d,c_{0},\delta_0,p,q,\ell,\gamma,T)$.
Moreover,
\begin{equation}
   \label{mainestimate-111}
\|(\vec{\phi}\cdot\Delta_{\vec{d}}) u\|_{H_{q,p}^{\vec{\phi},\gamma}(T)}\leq C_0 \left( \|f\|_{H_{q,p}^{\vec{\phi},\gamma}(T)} + \| \mathbf{1}_{\alpha q>1}u_{0}\|_{U^{\alpha,\vec{\phi},\gamma}_{q,p}} \right),
\end{equation}
where $C_0=C_0(\alpha,d,\delta_0,c_0,p,q,\ell,\gamma)$. 
\end{theorem}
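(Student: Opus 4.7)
The plan is to reduce \eqref{mainequation1} to the case $\gamma=0$ with zero initial data, construct the solution through the subordinated heat kernel $q$, and establish the maximal regularity estimate via a Calder\'on--Zygmund/$BMO$ argument tailored to the anisotropy. Using the isometry $(1-\vec{\phi}\cdot\Delta_{\vec{d}})^{\gamma/2}:\bH^{\alpha,\vec{\phi},\gamma}_{q,p}(T)\cap H^{\vec{\phi},\gamma+2}_{q,p}(T)\to \bH^{\alpha,\vec{\phi},0}_{q,p}(T)\cap H^{\vec{\phi},2}_{q,p}(T)$ from Proposition \ref{prop 03.21.16:12}(iii), which commutes with $\partial^{\alpha}_{t}$ and $\vec{\phi}\cdot\Delta_{\vec{d}}$, the general $\gamma$ reduces to $\gamma=0$. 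By Proposition \ref{lem 01.13.15:58}(iv), when $\alpha q>1$ an initial datum $u_{0}\in U^{\alpha,\vec{\phi},0}_{q,p}$ can be extended to some $\tilde u_{0}\in \bH^{\alpha,\vec{\phi},0}_{q,p}(T)\cap H^{\vec{\phi},2}_{q,p}(T)$ with norm controlled by $\|u_{0}\|_{U^{\alpha,\vec{\phi},0}_{q,p}}$; for $\alpha q\leq 1$ the initial value is trivial by hypothesis. Writing $u=v+\tilde u_{0}$ reduces the problem to proving the maximal regularity estimate for the zero-initial-data equation with a new right-hand side in $L_{q,p}(T)$.

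For zero initial data, the candidate solution is
$$
v(t,\vec{x}):=\int_{0}^{t}\int_{\mathbb{R}^{d}} D^{1-\alpha}_{t}q(t-s,\vec{y})\,f(s,\vec{x}-\vec{y})\,\mathrm{d}\vec{y}\,\mathrm{d}s,
$$
with $q$ as in \eqref{eqn 07.29.16:40}. Using the fundamental identity $\partial^{\alpha}_{t}q=(\vec{\phi}\cdot\Delta_{\vec{d}})q$ for the subordinated kernel together with Fubini, one verifies that $v$ solves \eqref{mainequation1} in the distributional sense of Definition \ref{def 01.06.16:16}. The quantitative heart of the theorem is then the $L_{q,p}(T)$-boundedness of the singular integral
$$
\mathcal{G}f(t,\vec{x}):=\int_{0}^{t}\int_{\mathbb{R}^{d}} D^{1-\alpha}_{t}(\vec{\phi}\cdot\Delta_{\vec{d}})q(t-s,\vec{y})\,f(s,\vec{x}-\vec{y})\,\mathrm{d}\vec{y}\,\mathrm{d}s,
$$
which yields \eqref{mainestimate-111}; combining it with the straightforward $L_{q,p}(T)$ bound on $v$ and $\partial^{\alpha}_{t}v=f+\mathcal{G}f$ gives \eqref{mainestimate1}.

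To control $\mathcal{G}$ I would implement the three-step scheme outlined in Section 1.3. First, derive sharp pointwise and marginal upper bounds for $q$, $D^{1-\alpha}_{t}q$, and their spatial derivatives from the subordination formula \eqref{eqn 07.29.16:40}, together with the kernel estimates for $p$ carried over from \cite{CKP23}; the key one-dimensional marginal bound
$$
\Bigl|\int_{\mathbb{R}^{d-d_{i}}}q(t,\vec{x})\,\mathrm{d}\hat{x}_{i}\Bigr|\leq G_{i}(t,x_{i})
$$
reflects that the $i$-th marginal is the density of $X^{i}_{R_{t}}$. Second, use these bounds to show $\|\mathcal{G}f\|_{BMO}\lesssim \|f\|_{L_{\infty}}$ by a near/far decomposition of $f$ on a quasi-metric ball associated with the symbol $\sum_{i}\phi_{i}(|\xi_{i}|^{2})$, treating oscillation in each coordinate direction through the marginal estimates while still accounting for the interaction with the other variables. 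Combined with the easy $L_{2}$ bound from Plancherel (the Fourier symbol of $\mathcal{G}$ is uniformly bounded by $\alpha$-stable scaling), the Fefferman--Stein sharp maximal function theorem produces the $L_{p}$ bound in space for each fixed $t$, and an $A_{p}$-weighted Calder\'on--Zygmund / extrapolation argument in $(t,\vec{x})$ then upgrades this to mixed-norm $L_{q,p}(T)$.

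The main obstacle is the $BMO$-$L_{\infty}$ estimate on $\mathcal{G}$. Unlike the classical parabolic case, where the product structure \eqref{25.04.07.15.22} of $p$ makes the kernel estimates essentially coordinate-wise, the time change $R_{t}$ couples all coordinates of $\vec{X}_{R_{t}}$, so the decay of $q$ in $x_{i}$ is intertwined with the magnitudes of the remaining $x_{j}$; this is precisely why the marginal bounds $G_{i}$ are indispensable. Moreover, by Lemma \ref{lem 06.08.14:52} the kernel $D^{1-\alpha}_{t}(\vec{\phi}\cdot\Delta_{\vec{d}})q$ fails to be integrable on $(0,T)\times\mathbb{R}^{d}$, so no brute-force kernel bound can close the estimate, and the delicate near/far splitting driven by the $G_{i}$'s is essential. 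Once \eqref{mainestimate-111} is secured, uniqueness follows directly from the a priori estimate, and existence for general data $(f,u_{0})\in H^{\vec{\phi},\gamma}_{q,p}(T)\times B^{\vec{\phi},\gamma+2-2/(\alpha q)}_{p,q}$ is obtained by approximation using the density statements of Proposition \ref{prop 03.21.16:12}(iv)--(v).
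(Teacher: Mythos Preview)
Your overall architecture matches the paper's proof exactly: reduce to $\gamma=0$ via the isometry in Proposition \ref{prop 03.21.16:12}(iii), handle the initial datum by extension and subtraction, represent the zero-data solution through the subordinated kernel, and establish the $L_{q}(L_{p})$-bound on $\mathcal{G}$ by combining an $L_{2}$ estimate with a $BMO$--$L_{\infty}$ estimate and then passing to mixed norms.

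Two corrections are worth flagging. First, Proposition \ref{lem 01.13.15:58}(iv) is the \emph{uniqueness} of $u_{0}$, not an extension result; the extension $u_{0}\mapsto \tilde u_{0}\in \bH^{\alpha,\vec{\phi},0}_{q,p}(T)\cap H^{\vec{\phi},2}_{q,p}(T)$ you need is precisely Theorem \ref{25.03.13.20.52}(ii), which the paper proves via generalized real interpolation (Section \ref{sec 01.17.17:00}). Second, your description of the interpolation step is slightly off: the $BMO$--$L_{\infty}$ and $L_{2}$ bounds are both on $\bR^{d+1}$ (space-time together, with respect to the anisotropic balls $Q_{b}$), so Fefferman--Stein plus Marcinkiewicz gives $L_{p}(\bR^{d+1})$ for $p\geq 2$ (then duality for $p<2$), not an $L_{p}$ bound ``in space for each fixed $t$''; the paper then upgrades $L_{p}(\bR^{d+1})$ to $L_{q}(\bR;L_{p}(\bR^{d}))$ via the operator-valued Calder\'on--Zygmund theorem of Krylov \cite{krylov2001caideron} rather than $A_{p}$-extrapolation, though the latter route would also work in principle.
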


\begin{remark}\label{rmk 01.31.16:48}

(i) When $\alpha q >1$, the function space $U^{\alpha,\vec{\phi},\gamma}_{q,p}=B^{\vec{\phi},\gamma+2-2/(\alpha q)}_{p,q}$ is the optimal class for the initial data. 
This result is established using the real interpolation theory, which we discuss in detail in Section \ref{sec 01.17.17:00}.

(ii) When $\alpha q <1$, then by Proposition \ref{lem 01.13.15:58}-(ii) $\mathbb{H}^{\alpha,\vec{\phi},\gamma}_{q,p}(T)\cap H^{\vec{\phi},\gamma+2}_{q,p}(T) = \mathbb{H}^{\alpha,\vec{\phi},\gamma}_{q,p,0}(T)\cap H^{\vec{\phi},\gamma+2}_{q,p}(T)$, which precisely means that the initial data $u_{0}$ can be absorbed to the free term $f$. Therefore, if $\alpha q<1$, the condition $u_{0} = 0$ is natural.
For the case $\alpha q =1$, the situation is more delicate to treat non-trivial initial conditions.
Hence, for the case $\alpha q \leq 1$, we just set $u_{0} \equiv 0$. We refer to \cite[Remark 3.16 (ii)]{KW2025} for a concise explanation.

(iii) The definition of the norm $\|\cdot\|_{\mathbb{H}^{\alpha,\vec{\phi},\gamma}_{q,p}(T)}$ implies that the estimate \eqref{mainestimate1} is equivalent to
\begin{align*}
\| \partial^{\alpha}_{t}  u \|_{H^{\vec{\phi},\gamma}_{q,p}(T)} + \|u\|_{H^{\vec{\phi},\gamma}_{q,p}(T)} + \|\vec{\phi}\cdot\Delta_{\vec{d}} \, u\|_{H^{\vec{\phi},\gamma}_{q,p}(T)}  \leq  C \left( \|f\|_{H_{q,p}^{\vec{\phi},\gamma}(T)} + \|\mathbf{1}_{\alpha q>1}u_{0}\|_{U^{\alpha,\vec{\phi},\gamma}_{q,p}} \right)
\end{align*}
which provides a more immediately interpretable formulation.

\end{remark}

\mysection{Heat Kernel Estimates for Space-Time Anisotropic Non-Local Operators}\label{sec 07.31.15:14}

Since IASBM $\vec{X}_{t} = (X^{1}_{t},\dots,X^{\ell}_{t})$ consists of independent processes, the heat kernel of $\vec{X}$, denoted by $p(t,\vec{x})$, can be expressed as a product of the heat kernels $p_{i}$ of the component process $X^{i}$:
\begin{equation*}
    p(t,\vec{x})=\prod_{i=1}^{\ell}p_i(t,x_i),\quad \forall (t,\vec{x})\in(0,\infty)\times\bR^{\vec{d}}.
\end{equation*}
Let $Q_{t}$ be an increasing L\'evy process that is independent of $\vec{X}_{t}$, and has the Laplace exponent
$$
\mathbb{E} \exp{(-\lambda Q_{t})} = \exp{(-\lambda^{\alpha}t)}.
$$
Define $R_{t}$ as the right-continuous inverse process of $Q_{t}$ given by 
$$
R_{t} := \inf\{s>0: Q_{s}>t \},
$$
and let $\varphi(t,\cdot)$ denote the probability density function of $R_{t}$. 
It follows that $q(t,\vec{x})$, the transition density of the time-changed IASBM $\vec{X}_{R_{t}}$ admits the following representations (see, \textit{e.g.}, \cite[Section 3]{KPR21nonlocal}):
\begin{equation}
\label{eqn 12.31.17:50}
q(t,\vec{x}) := \int_{0}^{\infty} p(r,\vec{x}) \mathbb{P}(R_{t}\in \mathrm{d}r) = \int_{0}^{\infty} p(r,\vec{x}) \varphi(t,r) \mathrm{d}r.
\end{equation}
The primary objectives of this section are as follows:
\begin{enumerate}[1.]
    \item To show that \(q(t,\vec{x})\) serves as the fundamental solution to the equation (Theorem \ref{lem 06.24.15:35}):
    \begin{equation*}
    	\partial^{\alpha}_{t} q(t,\vec{x}) = \vec{\phi}\cdot\Delta_{\vec{d}} \, q(t,\vec{x}). \end{equation*}
    \item To establish estimates for $q_{\alpha,\beta}$, defined as (Theorem \ref{lem 06.01.17:52}): 
    \begin{equation}
\label{eqn 06.24.15:02}
    q_{\alpha,\beta}(t,\vec{x}) := \int_{0}^{\infty} p(r,\vec{x}) \varphi_{\alpha,\beta}(t,r)\mathrm{d}r,
\end{equation}
where $\varphi_{\alpha,\beta}(t,r):=D_t^{\beta-\alpha}\varphi(t,r)$ with $\alpha\in(0,1)$, and $\beta\in\mathbb{R}$.
\end{enumerate}
Now, we present the main results of this section.
\begin{theorem}\label{lem 06.24.15:35}
Let $f \in C^{\infty}_{c}(\bR^{d+1}_{+})$.
Then, the function
\begin{align}\label{eqn 07.22.16:51}
\mathcal{G}_{0}f(t,\vec{x}) = \int_{0}^{t} \int_{\bR^{d}} q_{\alpha,1}(t-s, \vec{x}-\vec{y}) f(s,\vec{y}) \mathrm{d}\vec{y} \mathrm{d}s
\end{align}
is a (strong) solution to the equation
\begin{equation}
\label{25.03.14.10.36}
    \begin{cases}
    \partial^{\alpha}_{t}u(t,\vec{x}) = \vec{\phi} \cdot \Delta_{\vec{d}} \, u(t,\vec{x}) +f(t,\vec{x}),\quad &(t,\vec{x})\in(0,\infty)\times\mathbb{R}^d,\\
    u(0,\vec{x})=0,\quad &\vec{x}\in\mathbb{R}^d.
    \end{cases}
\end{equation}
Moreover, for $u\in C^{\infty}_{c}(\bR^{d+1}_{+})$, if we define $f:=\partial^{\alpha}_{t} u - \vec{\phi}\cdot\Delta_{\vec{d}} \, u$, then $u$ admits the representation \eqref{eqn 07.22.16:51}.
\end{theorem}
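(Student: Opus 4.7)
My approach is to argue in the spatial Fourier variables, exploiting the subordination representation $q(t,\vec{x})=\int_0^\infty p(r,\vec{x})\varphi(t,r)\,\mathrm{d}r$. Set $m_{\vec{\phi}}(\vec{\xi}):=\sum_{i=1}^{\ell}\phi_i(|\xi_i|^2)$, the symbol of $-\vec{\phi}\cdot\Delta_{\vec{d}}$. Since $R_t$ is independent of $\vec{X}$, conditioning on $R_t$ and using $\widehat{p(r,\cdot)}(\vec{\xi})=e^{-r m_{\vec{\phi}}(\vec{\xi})}$ yield
\begin{equation*}
\widehat{q(t,\cdot)}(\vec{\xi})=\mathbb{E}\bigl[e^{i\vec{\xi}\cdot\vec{X}_{R_t}}\bigr]=\int_0^\infty e^{-r m_{\vec{\phi}}(\vec{\xi})}\varphi(t,r)\,\mathrm{d}r=E_\alpha\bigl(-m_{\vec{\phi}}(\vec{\xi})\,t^\alpha\bigr),
\end{equation*}
where the last identity is the standard Laplace transform of the inverse $\alpha$-stable subordinator density in terms of the Mittag--Leffler function. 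Invoking the ODE $\partial_t^\alpha E_\alpha(-\lambda t^\alpha)=-\lambda E_\alpha(-\lambda t^\alpha)$ pointwise in $\vec{\xi}$ and Fourier-inverting produces
\begin{equation*}
\partial_t^\alpha q(t,\vec{x})=\vec{\phi}\cdot\Delta_{\vec{d}}\,q(t,\vec{x}), \qquad t>0,\ \vec{x}\in\bR^d,
\end{equation*}
with $q$ playing the role of the fundamental solution (its total mass is $1$, so $q(0,\cdot)=\delta_{\vec{0}}$ in the distributional sense).

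Given this kernel identity, \eqref{eqn 07.22.16:51} is just the fractional Duhamel convolution. From \eqref{eqn 06.24.15:02} one first verifies $q_{\alpha,1}=D_t^{1-\alpha}q$, which follows by pulling $D_t^{1-\alpha}$ under the integral defining $q$ once suitable control on $p$ and $\varphi$ is available. Taking the spatial Fourier transform of $\mathcal{G}_0 f$ then gives
\begin{equation*}
\widehat{\mathcal{G}_0 f}(t,\vec{\xi})=\int_0^t D_s^{1-\alpha}E_\alpha\bigl(-m_{\vec{\phi}}(\vec{\xi})\,s^\alpha\bigr)\,\widehat{f}(t-s,\vec{\xi})\,\mathrm{d}s,
\end{equation*}
which is precisely the Mittag--Leffler Duhamel representation of the unique solution of the scalar fractional ODE
\begin{equation*}
\partial_t^\alpha w(t,\vec{\xi})+m_{\vec{\phi}}(\vec{\xi})\,w(t,\vec{\xi})=\widehat{f}(t,\vec{\xi}),\qquad w(0,\vec{\xi})=0.
\end{equation*}
Fourier-inverting translates this back to $\partial_t^\alpha \mathcal{G}_0 f=\vec{\phi}\cdot\Delta_{\vec{d}}\,\mathcal{G}_0 f+f$ on $(0,\infty)\times\bR^d$, while $\mathcal{G}_0 f(0,\vec{x})=0$ follows from $\mathrm{supp}\,f\subset(0,\infty)\times\bR^d$ together with the local integrability of $q_{\alpha,1}(\cdot,\vec{y})$ near $t=0$; this settles that $\mathcal{G}_0 f$ solves \eqref{25.03.14.10.36}.

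The converse assertion is then essentially free: if $u\in C_c^\infty(\bR^{d+1}_+)$ and $f:=\partial_t^\alpha u-\vec{\phi}\cdot\Delta_{\vec{d}}\,u$, then for every fixed $\vec{\xi}$ the function $\widehat{u}(\cdot,\vec{\xi})$ solves the same scalar fractional ODE with vanishing initial value (the latter by the support condition on $u$), so uniqueness forces $\widehat{u}=\widehat{\mathcal{G}_0 f}$ and hence $u=\mathcal{G}_0 f$. The principal obstacle in executing this plan is technical rather than conceptual: one must rigorously justify the interchanges of order of integration and of (fractional) differentiation implicit in all the manipulations above, pulling $D_t^{1-\alpha}$ and $\vec{\phi}\cdot\Delta_{\vec{d}}$ inside the integrals defining $q$ and $q_{\alpha,1}$ and handling the singularities as $r\to 0^+$ and $t\to 0^+$. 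This is precisely where the forthcoming heat kernel upper bounds, together with the scaling Assumption \ref{23.03.03.16.04} on $\vec{\phi}$, will be indispensable for supplying the absolute convergence needed.
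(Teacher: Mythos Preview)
Your proposal is correct and follows essentially the same route as the paper: the paper's proof is a one-line reference to \cite[Lemma 3.5]{KKL17}, whose argument is exactly the Fourier/Mittag--Leffler computation you outline (cf.\ the identity $\cF_{d}[q_{\alpha,\beta}(t,\cdot)](\vec{\xi}) = t^{\alpha-\beta}E_{\alpha,1-\beta+\alpha}\bigl(-t^{\alpha}m_{\vec{\phi}}(\vec{\xi})\bigr)$ recorded in \eqref{eqn 06.01.16:14}). The technical interchanges you flag are justified precisely by the two kernel estimates the paper supplies for this purpose, namely Lemma \ref{lem 06.08.14:52} and Corollary \ref{lem 02.17.22:32}\,(iii).
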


\begin{theorem}\label{lem 06.01.17:52}
Let $\alpha\in (0,1)$, $\beta\in \bR$, and $m_{i}$ ($i=1,\dots,\ell$)  be $d_{i}$-dimensional multi-indices.
Additionally, let $\ell_{1},\ell_{2} \in \bN_{0}$ such that $\ell_{1}+\ell_{2} = \ell$, and let $\{j_{1},\dots,j_{\ell_{1}}, i_{1},\dots, i_{\ell_{2}} \}$ be a permutation of $\{1,\dots,\ell \}$.
Suppose that $(t,x)\in (0,\infty)\times (\bR^{d} \setminus \{0\})$ satisfies 
\begin{gather}
1 \leq t^{\alpha}\phi_{i_{\ell_{2}}}(|x_{i_{\ell_{2}}}|^{-2}) \leq \dots \leq t^{\alpha}\phi_{i_{1}}(|x_{i_{1}}|^{-2}),  \label{eqn 06.05.14:34}
\\
t^{\alpha} \phi_{j}(|x_{j}|^{-2}) \leq 1 \quad \forall\, j= j_{1},\dots,j_{\ell_{1}}.   \nonumber
\end{gather}
Then we have
\begin{align}\label{eqn 06.01.18:14}
\left|  D^{m_{1}}_{x_{1}}\dots D^{m_{\ell}}_{x_{\ell}}q_{\alpha,\beta}(t,\vec{x}) \right| \leq C t^{\frac{ \ell_{1} \alpha}{2}-\beta} \left( \prod_{n=1}^{\ell_{1}} \frac{(\phi_{j_{n}}(|x_{j_{n}}|^{-2}))^{\frac{1}{2}}}{|x_{j_{n}}|^{d_{j_{n}}+m_{j_{n}}}}  \right) \Lambda^{\ell_{2},\vec{m}}_{\alpha,\beta}(t,x_{i_{1}},\dots,x_{i_{\ell_{2}}}),
\end{align}
where
\begin{align*}
&\Lambda^{\ell_{2},\vec{m}}_{\alpha,\beta}(t,x_{i_{1}},\dots,x_{i_{\ell_{2}}})
\\
=&\prod_{k=1}^{\ell_{2}} \left( \int_{(\phi_{i_{k}}(|x_{i_{k}}|^{-2}))^{-\frac{1}{\ell_{2}}}}^{2^{\frac{1}{\ell_{2}}}t^{\frac{\alpha}{\ell_{2}}}} \left( \phi_{i_{k}}^{-1}(r^{-\ell_{2}}) \right)^{\frac{d_{i_{k}}+m_{i_{k}}}{2}} \mathrm{d}r \right) 
+\int_{(\phi_{i_{\ell_{2}}}(|x_{i_{\ell_{2}}}|^{-2}))^{-1}}^{2t^{\alpha}}   \left( \prod_{k=1}^{\ell_{2}}  (\phi^{-1}_{i_{k}}(r^{-1})^{\frac{d_{i_{k}}+m_{i_{k}}}{2}} \right) \mathrm{d}r
\\
&+ \sum_{k=2}^{\ell_{2}} \Bigg[ \int_{(\phi_{i_{k-1}}(|x_{i_{k-1}}|^{-2}))^{-\frac{1}{\ell_{2}}}}^{2^{\frac{1}{\ell_{2}}}t^{\frac{\alpha}{\ell_{2}}}} \left( \prod_{n=1}^{k-1}(\phi^{-1}_{i_{n}}(r^{-\ell_{2}}))^{\frac{d_{i_{n}}+m_{i_{n}}}{2}} \right)   r^{n-2} \mathrm{d}r 
\\
& \qquad \qquad \qquad \qquad \times \prod_{n=k}^{\ell_{2}} \left( \int_{(\phi_{i_{n}}(|x_{i_{n}}|^{-2}))^{-\frac{1}{\ell_{2}}}}^{2^{\frac{1}{\ell_{2}}}t^{\frac{\alpha}{\ell_{2}}}} \left( \phi_{i_{n}}^{-1}(r^{-\ell_{2}}) \right)^{\frac{d_{i_{n}}+m_{i_{n}}}{2}} \mathrm{d}r \right) \Bigg], 
\\
=:& \Lambda^{\ell_{2},\vec{m},1}_{\alpha,\beta}(t,x_{i_{1}},\dots,x_{i_{\ell_{2}}})
+\Lambda^{\ell_{2},\vec{m},2}_{\alpha,\beta}(t,x_{i_{1}},\dots,x_{i_{\ell_{2}}})
 + \sum_{k=2}^{\ell_{2}} \lambda^{\ell_{2},\vec{m},k}_{\alpha,\beta}(t,x_{i_{1}},\dots,x_{i_{\ell_{2}}})
\\
=:& \Lambda^{\ell_{2},\vec{m},1}_{\alpha,\beta}(t,x_{i_{1}},\dots,x_{i_{\ell_{2}}}) + \Lambda^{\ell_{2},\vec{m},2}_{\alpha,\beta}(t,x_{i_{1}},\dots,x_{i_{\ell_{2}}}) + \Lambda^{\ell_{2},\vec{m},3}_{\alpha,\beta}(t,x_{i_{1}},\dots,x_{i_{\ell_{2}}}).
\end{align*}
The constant $C$ depends only on $\alpha,\beta,d,c_{0},\delta_{0},\ell_{1},\ell_{2},m_{1},\dots,m_{\ell}$.
\end{theorem}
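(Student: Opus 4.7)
My plan is to exploit the product structure $p(r,\vec{x})=\prod_{i=1}^{\ell}p_i(r,x_i)$ coming from the independence of the components of $\vec{X}$, which lets the spatial derivatives factor through the subordination integral:
\begin{equation*}
D^{m_{1}}_{x_{1}}\cdots D^{m_{\ell}}_{x_{\ell}}q_{\alpha,\beta}(t,\vec{x})=\int_{0}^{\infty}\prod_{i=1}^{\ell}D^{m_{i}}_{x_{i}}p_{i}(r,x_{i})\,\varphi_{\alpha,\beta}(t,r)\,\mathrm{d}r.
\end{equation*}
The task then reduces to bounding this one-dimensional integral in $r$, using the two-regime coordinate-wise heat-kernel derivative bounds for subordinate Brownian motions from \cite{CKP23} together with the scaling properties of $\varphi$ and the identity $\varphi_{\alpha,\beta}=D^{\beta-\alpha}_{t}\varphi$.

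For each coordinate $i$ I introduce the critical time $r_{i}:=(\phi_{i}(|x_{i}|^{-2}))^{-1}$ at which the two regimes for $|D^{m_i}_{x_i}p_i(r,x_i)|$ match. The hypothesis forces $r_{j_{n}}\geq t^{\alpha}$ for $n=1,\ldots,\ell_{1}$, so throughout the effective support $r\lesssim t^{\alpha}$ of $\varphi_{\alpha,\beta}(t,\cdot)$ every $j_{n}$-coordinate stays in the off-diagonal (L\'evy) regime. Extracting those factors produces the $r$-independent prefactor $\prod_{n=1}^{\ell_{1}}\phi_{j_{n}}(|x_{j_{n}}|^{-2})^{1/2}/|x_{j_{n}}|^{d_{j_{n}}+m_{j_{n}}}$, and the leftover powers of $r$ combine with $\varphi_{\alpha,\beta}$ through the inverse-stable-subordinator scaling to give the prefactor $t^{\ell_{1}\alpha/2-\beta}$.

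For the $i_{k}$-coordinates I split the $r$-integral along the chain $r_{i_{1}}\leq r_{i_{2}}\leq\cdots\leq r_{i_{\ell_{2}}}\leq t^{\alpha}$ dictated by \eqref{eqn 06.05.14:34}. On $[r_{i_{\ell_{2}}},2t^{\alpha}]$ every $i_{k}$ is in the on-diagonal regime and the integrand becomes $\prod_{k}(\phi^{-1}_{i_{k}}(r^{-1}))^{(d_{i_{k}}+m_{i_{k}})/2}$, yielding the summand $\Lambda^{\ell_{2},\vec{m},2}_{\alpha,\beta}$. On each intermediate slice $[r_{i_{k-1}},r_{i_{k}}]$ the first $k-1$ coordinates are on-diagonal while $i_{k},\ldots,i_{\ell_{2}}$ are still off-diagonal, and integrating the resulting mixed product yields the terms $\lambda^{\ell_{2},\vec{m},k}_{\alpha,\beta}$ that make up $\Lambda^{\ell_{2},\vec{m},3}_{\alpha,\beta}$. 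The remaining piece $\Lambda^{\ell_{2},\vec{m},1}_{\alpha,\beta}$, which is a pure product of one-dimensional integrals, I expect to obtain from a decoupling argument on the low-$r$ region: the change of variable $s=r^{\ell_{2}}$ combined with an AM--GM type majorization converts a single integral of a product into a product of integrals, with the replacement of $\phi^{-1}_{i_{k}}(r^{-1})$ by $\phi^{-1}_{i_{k}}(r^{-\ell_{2}})$ controlled by \eqref{phiratio} and \eqref{eqn 07.15.14:49}.

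The core of the argument is thus routine regime-splitting for subordinate Brownian motions; the main obstacle is combinatorial bookkeeping. Each of the four pieces of $\Lambda^{\ell_{2},\vec{m}}_{\alpha,\beta}$ carries its own endpoints and its own powers of $r$, and I must verify that the decomposition of the $r$-axis by the critical times, combined with the decoupling on the low-$r$ region, reproduces all of them with the correct exponents and the correct permutation of indices $\{j_{1},\ldots,j_{\ell_{1}},i_{1},\ldots,i_{\ell_{2}}\}$. Pervasive use of the weak-lower-scaling estimates \eqref{phiratio}, \eqref{int phi}, and \eqref{eqn 07.15.14:49} from Remark \ref{rmk 11.05.17:16} to convert between $\phi_{i}$, $\phi^{-1}_{i}$, and powers of $r$ will be unavoidable, and matching every scaling exponent to the stated formula is the delicate part rather than any single sharp estimate.
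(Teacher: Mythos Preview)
Your plan matches the paper's proof almost exactly: the paper also factors $D^{\vec m}_{\vec x}p=D^{\vec m_1}_{\vec x}P_1\cdot D^{\vec m_2}_{\vec x}P_2$, splits the $r$-integral at $r=t^{\alpha}$ into $J_1+J_2$, extracts the off-diagonal product $\prod_{n}\phi_{j_n}(|x_{j_n}|^{-2})^{1/2}/|x_{j_n}|^{d_{j_n}+m_{j_n}}$ together with the prefactor $t^{\ell_1\alpha/2-\beta}$, and then decomposes $J_1$ along the chain $r_{i_1}\le\cdots\le r_{i_{\ell_2}}\le t^{\alpha}$ via the auxiliary Lemma \ref{lem 12.16.18:23} to produce $\Lambda^{\ell_2,\vec m,1},\Lambda^{\ell_2,\vec m,2},\Lambda^{\ell_2,\vec m,3}$. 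The only points to make explicit are that the tail $r>t^{\alpha}$ is controlled by the exponential bound \eqref{eqn 06.01.15:26} on $\varphi_{\alpha,\beta}$ (your ``effective support'' remark), and that the product form $\Lambda^{\ell_2,\vec m,1}$ comes not from an AM--GM step but from first computing $\tilde I_1\le C\prod_k(\phi_{i_k}(|x_{i_k}|^{-2}))^{-1/\ell_2}|x_{i_k}|^{-d_{i_k}-m_{i_k}}$ using the ordering \eqref{eqn 06.05.14:34}, and then rewriting each factor as an integral via \eqref{eqn 06.05.15:11}.
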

We provide some remarks on Theorem \ref{lem 06.01.17:52} to offer further motivation.
\begin{remark}\label{rmk 12.23.17:06}
(i) The transition density $p$ of a stochastic process $Y$ describes the displacement of the process.
Specifically, it depends on both the starting point and the endpoint in the sense that
\begin{align*}
\mathbb{P}(Y_{t}\in A | Y_{0}=x) = \int_{A} p(t,x,y) \mathrm{d}y.
\end{align*}
If $Y$ is a L\'evy process, 
then it is translation invariant, which implies that $p(t,x,y)=p(t,0,y-x)$.
Thus, for a L\'evy process $Y$, its transition density can be expressed as $p(t, y-x):=p(t,0,y-x)$. 
Furthermore, if $Y$ is isotropic, then the transition density function depends only on the distance from the origin, and we can write $p(t,x)=p(t,|x|)$.

The scaling condition \textbf{WLS($c_{0},\delta_{0}$)} on characteristic exponents of L\'evy processes determines the behavior of corresponding jumping kernels, and thus provides estimations of transition densities.
For instance, if $\phi_i$ is a Bernstein function satisfying \textbf{WLS($c_{0},\delta_{0}$)}, then the corresponding transition density $p_i$ satisfies the estimate
\begin{align*}
|p_{i}(t,x_{i}) | \leq C \left( \underbrace{(\phi^{-1}_{i}(t^{-1}))^{\frac{d_{i}}{2}}}_{\text{estimate in near diagonal regime}} \mathbf{1}_{t\phi_{i}(|x_{i}|^{-2}) \geq 1}  + \underbrace{t^{1/2}\frac{(\phi_{i}(|x_{i}|^{-2}))^{1/2}}{|x_{i}|^{d_{i}}}}_{\text{estimate in off-diagonal regime}} \mathbf{1}_{t\phi_{i}(|x_{i}|^{-2}) \leq 1} \right),\quad i=1,2,\cdots,\ell
\end{align*}
(see \cite[Theorem 3.3]{CKP23}).
The set $\{(t,x_i):t\phi_{i}(|x_{i}|^{-2}) \leq 1\}$ is referred to as the \textit{off-diagonal regime}, as it occurs when $|x_i|$ (\textit{i.e.}, distance between $0$ and $x_i$) is sufficiently large relative to $t$.
Likewise, the set $\{(t,x_i):t\phi_{i}(|x_{i}|^{-2}) \geq 1\}$ is called the \textit{near-diagonal regime}.
Therefore, if Bernstein functions $\phi_1\cdots,\phi_{\ell}$ satisfy \textbf{WLS($c_{0},\delta_{0}$)}, then the corresponding IASBM $\vec{X}_t=(X^1_t,\cdots,X^{\ell}_t)$ satisfies the following heat kernel estimate:
\begin{align}\label{eqn 12.23.17:59}
|p(t,\vec{x})|=\prod_{i=1}^{\ell}|p(t,x_i)|\leq C\prod_{i=1}^{\ell}\left( (\phi^{-1}_{i}(t^{-1}))^{\frac{d_{i}}{2}} \mathbf{1}_{t\phi_{i}(|x_{i}|^{-2}) \geq 1}  + t^{1/2}\frac{(\phi_{i}(|x_{i}|^{-2}))^{1/2}}{|x_{i}|^{d_{i}}} \mathbf{1}_{t\phi_{i}(|x_{i}|^{-2}) \leq 1} \right).
\end{align}
Theorem \ref{lem 06.01.17:52} provides a time-fractional analogue of \eqref{eqn 12.23.17:59}.
However, since the independence of the component processes $X^i_{R_t}$ is not guaranteed, the above argument cannot be directly applied.
Thus, estimating the transition density $q(t,\vec{x})$ of time-changed IASBM $\vec{X}_{R_t}$ requires a more delicate analysis.

(ii) For $(x_{j_{1}},\dots,x_{j_{\ell_{1}}})$ which lies in the off-diagonal regime, the corresponding term 
\begin{align*}
t^{\frac{ \ell_{1} \alpha}{2}-\beta} \left( \prod_{n=1}^{\ell_{1}} \frac{(\phi_{j_{n}}(|x_{j_{n}}|^{-2}))^{\frac{1}{2}}}{|x_{j_{n}}|^{d_{j_{n}}+m_{j_{n}}}}  \right)
\end{align*}
in the right-hand side of \eqref{eqn 06.01.18:14} follows directly from the off-diagonal upper bound for each $p_{j_n}$.
On the other hand, in the near-diagonal estimates for $(x_{i_{1}},\dots,x_{i_{\ell_{2}}})$, it is difficult to express the result as a product of near-diagonal upper bounds for each $p_{i_n}$, since the component processes $X^i_{R_t}$ are no longer independent after the time change.
However, while $\Lambda^{\ell_{2},\vec{m}}_{\alpha,\beta}$ appears complex at first glance, its structure can be understood by examining the hierarchical ordering imposed by \eqref{eqn 06.05.14:34}.
This condition determines which components of $(x_{i_1},\cdots,x_{i_{\ell_2}})$ are closer to the origin, which is crucial in representing the transition density as in \eqref{eqn 12.31.17:50}:
\begin{align*}
&\int_{0}^{\infty} D^{\vec{m}}_{\vec{x}}p(r,\vec{x})\varphi_{\alpha,\beta}(t,r)\mathrm{d}r
\\
=& \int_{t^{\alpha}}^{\infty} \cdots \mathrm{d}r + \int_{0}^{(\phi_{i_{1}}(|x_{i_{1}}|^{-2}))^{-1}} \cdots \mathrm{d}r + \cdots + \int_{(\phi_{i_{\ell_{2}-1}}(|x_{i_{\ell_{2}-1}}|^{-2}))^{-1}}^{(\phi_{i_{\ell_{2}}}(|x_{i_{\ell_{2}}}|^{-2}))^{-1}} \cdots \mathrm{d}r + \int_{(\phi_{i_{\ell_{2}}}(|x_{i_{\ell_{2}}}|^{-2}))^{-1}}^{t^{\alpha}} \cdots \mathrm{d}r.
\end{align*}
This explains why the function \( \Lambda^{\ell_{2},\vec{m}}_{\alpha,\beta} \) in \eqref{eqn 06.01.18:14} has a complex form

(iii) In particular, when we set $\ell=1$ (and let $\phi_{1}=\phi_{\ell}=\phi$), the estimate \eqref{eqn 06.01.18:14} simplifies to
\begin{align*}
\left|D^{m}_{x}q_{\alpha,\beta}(t,x) \right| &\leq C \left( t^{-\beta} \int_{(\phi(|x|^{-2}))^{-1}}^{2t^{\alpha}}\left( \phi^{-1}(r^{-1}) \right)^{\frac{d+m}{2}} \mathrm{d}r \mathbf{1}_{\ell_{2}=\ell}+  t^{\frac{\alpha}{2}-\beta} \frac{(\phi(|x|^{-2}))^{\frac{1}{2}}}{|x|^{d+m}}\mathbf{1}_{\ell_{1}=\ell}  \right)
\\
& \leq C \left( t^{-\beta} \int_{(\phi(|x|^{-2}))^{-1}}^{2t^{\alpha}}\left( \phi^{-1}(r^{-1}) \right)^{\frac{d+m}{2}} \mathrm{d}r \mathbf{1}_{t^{\alpha}\phi(|x|^{-2}) \geq 1}+  t^{\frac{\alpha}{2}-\beta} \frac{(\phi(|x|^{-2}))^{\frac{1}{2}}}{|x|^{d+m}}\mathbf{1}_{t^{\alpha}\phi(|x|^{-2}) \leq 1}  \right)
\end{align*}
which resembles the estimate in the isotropic case (see, e.g., \cite[Lemma 3.8]{KPR21nonlocal}). 
\end{remark}
The following outlines the proof structure for Theorem \ref{lem 06.24.15:35} and Theorem \ref{lem 06.01.17:52}:
\begin{equation*}
    \begin{aligned}
    \begin{rcases}
    &\text{Theorem \ref{pestimate}: Estimates of each $p_i$ } \\
    &\to
        \text{Lemma \ref{lem 12.16.18:23}: Lemma for the near-diagonal estimates}
        \end{rcases}
    \to&
        \text{Theorem \ref{lem 06.01.17:52}}\to\begin{rcases}
        \text{Lemma \ref{lem 06.08.14:52}}\\
        \text{Corollary \ref{lem 02.17.22:32}}
        \end{rcases}
        \to \text{Theorem \ref{lem 06.24.15:35}},
    \end{aligned}
\end{equation*}
where $A\to B$ indicates that $A$ is used in the proof of $B$. 

From \eqref{eqn 12.31.17:50}, we observe that $q(t,\vec{x})$ consists of two main components, $p$ and $\varphi$.
The function $\varphi_{\alpha,\beta}$ satisfies the following estimates (see \cite[Lemma 3.7 (ii)]{KPR21nonlocal}):
\begin{equation}\label{eqn 06.01.15:26}
|\varphi_{\alpha,\beta}(t,r)| \leq C  t^{-\beta} \exp{\left(-c(rt^{-\alpha})^{1/(1-\alpha)}\right)} \quad \text{for} \quad rt^{-\alpha}>1,
\end{equation}
and
\begin{align}\label{eqn 06.01.15:30}
|\varphi_{\alpha,\beta}(t,r)| \leq \begin{cases} C r t^{-\alpha-\beta} & \beta \in \bN
\\
C t^{-\beta} & \beta \notin \bN \quad \text{for} \quad rt^{-\alpha}\leq 1,
\end{cases}
\end{align}
where the constants $C,c>0$ depend only on $\alpha,\beta$.
The estimates for $p_i$ are given in \cite[Theorem 3.3]{CKP23} and are summarized as follows.
\begin{theorem} \label{pestimate}
Let $i=1,\cdots\ell$ and Assumption \ref{23.03.03.16.04} hold. 

(i) For any $m,k\in \bN_{0}$, and $\nu\in(0,1)$, we have
\begin{align*}
|\phi_{i}(\Delta_{x_{i}})^{\nu k}D^{m}_{x_{i}}p_{i}(t,x_{i})| \leq C_{i}  \left(t^{-\nu  k}\left(\phi_{i}^{-1}(t^{-1})\right)^{\frac{d_{i}+m}{2}}\wedge t^{\nu-\nu k}\frac{(\phi_{i}(|x_{i}|^{-2}))^{\nu}}{|x_{i}|^{d_{i}+m}}\right),
\end{align*} 
where the constant $C_{i}$ depends only on $c_{0},\delta_{0},d_{i},m,k,\nu$. In particular, we have

\begin{equation} \label{pestimate ineq}
|\phi_{i}(\Delta_{x_{i}})^{k}D^{m}_{x_{i}}p_{i}(t,x_{i})| \leq C_{i}  \left(t^{-k}\left(\phi_{i}^{-1}(t^{-1})\right)^{\frac{d_{i}+m}{2}}\wedge t^{\frac{1}{2}-k}\frac{(\phi_{i}(|x_{i}|^{-2}))^{1/2}}{|x_{i}|^{d_{i}+m}}\right),
\end{equation}

(ii) For any $k=0,1,\dots$, we have
\begin{equation*}
\int_{\bR^{d_{i}}} |\phi_{i}(\Delta_{x_{i}})^{k}p_{i}(t,x_{i})| \mathrm{d}x_{i} \leq C_{i} t^{-k},
\end{equation*}
where the constant $C_{i}$ depends only on $c_{0},\delta_{0},d_{i},k$.
\end{theorem}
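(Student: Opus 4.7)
My starting point would be the Fourier representation
\[
p_{i}(t,x_{i})=(2\pi)^{-d_{i}}\int_{\bR^{d_{i}}}e^{ix_{i}\cdot\xi-t\phi_{i}(|\xi|^{2})}\,\mathrm{d}\xi,
\]
so that $\phi_{i}(\Delta_{x_{i}})^{\nu k}D^{m}_{x_{i}}p_{i}(t,x_{i})$ is the inverse Fourier transform (up to a unimodular factor) of the symbol $(\phi_{i}(|\xi|^{2}))^{\nu k}\xi^{\alpha}e^{-t\phi_{i}(|\xi|^{2})}$ for some multi-index $\alpha$ with $|\alpha|=m$. The pointwise bound in (i) would then be obtained as the minimum of two estimates derived separately in the near-diagonal regime $t\phi_{i}(|x_{i}|^{-2})\geq 1$ and the off-diagonal regime $t\phi_{i}(|x_{i}|^{-2})\leq 1$.

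For the near-diagonal regime I would use $|e^{ix_{i}\cdot\xi}|\leq 1$ and the substitution $\xi=\lambda^{1/2}\eta$ with $\lambda:=\phi_{i}^{-1}(t^{-1})$. Concavity of $\phi_{i}$ gives $t\phi_{i}(\lambda|\eta|^{2})\leq|\eta|^{2}$, while \textbf{WLS}$(c_{0},\delta_{0})$ yields $t\phi_{i}(\lambda|\eta|^{2})\geq c_{0}|\eta|^{2\delta_{0}}$ for $|\eta|\geq 1$. These two bounds make the rescaled integrand absolutely integrable in $\eta$ with a constant independent of $t$, producing the factor $t^{-\nu k}(\phi_{i}^{-1}(t^{-1}))^{(d_{i}+m)/2}$. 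For the off-diagonal regime I would exploit the subordination representation $p_{i}(t,x_{i})=\int_{0}^{\infty}g_{s}(x_{i})\,\bP(S^{i}_{t}\in\mathrm{d}s)$ with $g_{s}(x)=(4\pi s)^{-d_{i}/2}e^{-|x|^{2}/(4s)}$. Differentiating in $x_{i}$ and handling $\phi_{i}(\Delta_{x_{i}})^{\nu k}$ through the identity $\phi_{i}(\Delta)g_{s}=\partial_{s}g_{s}$ combined with a fractional integration by parts in $s$ transfers the nonlocal operator onto the density of $S^{i}_{t}$; splitting the resulting $s$-integral at $s\simeq|x_{i}|^{2}$ and using the tail bound $\bP(S^{i}_{t}\leq r)\leq C\,t\phi_{i}(1/r)$ (a consequence of Markov's inequality applied to $e^{-S^{i}_{t}/r}$ and the Laplace exponent $t\phi_{i}$), together with \textbf{WLS}, yields the desired factor $t^{\nu-\nu k}(\phi_{i}(|x_{i}|^{-2}))^{\nu}/|x_{i}|^{d_{i}+m}$.

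Part (ii) would then follow by integrating the two pointwise bounds from (i) over the appropriate regions. The near-diagonal ball has Lebesgue measure $\lesssim(\phi_{i}^{-1}(t^{-1}))^{-d_{i}/2}$, so its contribution is $\lesssim t^{-k}$; the off-diagonal tail reduces after passing to spherical coordinates to a one-dimensional integral of the shape handled by \eqref{int phi}, which is again $\lesssim t^{-k}$.

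\textbf{Main obstacle.} The most delicate step is treating the fractional power $\phi_{i}(\Delta_{x_{i}})^{\nu k}$ when $\nu k\notin\bN$, since $(\phi_{i}(|\xi|^{2}))^{\nu k}$ need not be smooth at the origin and a direct integration-by-parts in $\xi$ for the off-diagonal decay requires derivative bounds of the form $|\partial_{\xi}^{\beta}(\phi_{i}(|\xi|^{2}))^{\nu k}|\lesssim|\xi|^{-|\beta|}(\phi_{i}(|\xi|^{2}))^{\nu k}$ that ultimately rest on \textbf{WLS}$(c_{0},\delta_{0})$. I would invest the main effort in verifying these derivative bounds precisely, or equivalently in sharpening the tail control of $S^{i}_{t}$ in the subordination approach, since this is the mechanism by which the scaling encoded in \textbf{WLS} propagates from the symbol to the kernel $p_{i}$.
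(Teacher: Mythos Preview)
The paper does not actually prove this theorem; it is quoted verbatim from \cite[Theorem 3.3]{CKP23}, so there is no in-paper proof to compare against. Your overall strategy---Fourier representation plus the rescaling $\xi=(\phi_i^{-1}(t^{-1}))^{1/2}\eta$ for the near-diagonal bound, subordination plus the tail estimate $\bP(S^i_t\le r)\lesssim t\phi_i(1/r)$ for the off-diagonal bound, and then integrating both bounds for (ii)---is the standard route for such estimates and is almost certainly close in spirit to the proof in \cite{CKP23}.

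One concrete error: the identity you invoke, $\phi_i(\Delta)g_s=\partial_s g_s$, is false. The Gaussian heat kernel satisfies $\Delta g_s=\partial_s g_s$, not $\phi_i(\Delta)g_s=\partial_s g_s$; on the Fourier side the left-hand side is $-\phi_i(|\xi|^2)e^{-s|\xi|^2}$ while the right-hand side is $-|\xi|^2e^{-s|\xi|^2}$. The correct mechanism is instead the semigroup relation $\partial_t p_i=\phi_i(\Delta)p_i$, which gives $\phi_i(\Delta)^k p_i=\partial_t^k p_i$ for integer $k$, so the operator is transferred to the $t$-variable of the subordinator law $\eta_t(\mathrm{d}s)=\bP(S^i_t\in\mathrm{d}s)$ rather than to the $s$-variable of $g_s$. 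For fractional $\nu k$ you can either work directly with the symbol $(\phi_i(|\xi|^2))^{\nu k}e^{-t\phi_i(|\xi|^2)}$ and the derivative bound $|D^n\phi_i(\lambda)|\le C(n)\lambda^{-n}\phi_i(\lambda)$ (which follows from the L\'evy--Khintchine representation \eqref{23.03.08.12.52} and underlies \eqref{eqn 02.10.13:42}), or use a fractional-in-$t$ version of the semigroup identity. Either way, the ``main obstacle'' you flag is handled by these symbol derivative bounds rather than by anything specific to the subordinator tail, so redirect your effort there.
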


\begin{lemma}\label{lem 12.16.18:23}
Let $\alpha\in (0,1)$, $\beta\in \bR$, $1 \leq \ell_{2} \leq \ell$, and let $m_{i}$ ($i=1,\dots, \ell$) be $d_{i}$-dimensional multi-indices. Suppose $\{i_{1},\dots, i_{\ell_{2}} \}$ is a nonempty subset of $\{1,\dots,\ell\}$ and $(t,x)\in (0,\infty)\times (\bR^{d} \setminus \{0\})$ satisfies 
\begin{gather*}
1 \leq t^{\alpha}\phi_{i_{\ell_{2}}}(|x_{i_{\ell_{2}}}|^{-2}) \leq \dots \leq t^{\alpha}\phi_{i_{1}}(|x_{i_{1}}|^{-2}).
\end{gather*}
Define $\vec{m}=(m_{1},\dots,m_{\ell})$ and
$$
P_{2}(t,\vec{x}) = \prod_{k=1}^{\ell_{2}}p_{i_{k}}(t,x_{i_{k}}).
$$
Let us also denote
\begin{align*}
    \tilde{I}_{1}(t,\vec{x}) &=: \int_{0}^{(\phi_{i_{1}}(|x_{i_{1}}|^{-2}))^{-1}} \left| D^{m_{i_{1}}}_{x_{i_{1}}} \cdots D^{m_{i_{\ell_{2}}}}_{x_{i_{\ell_{2}}}} P_{2}(r,\vec{x}) \right| \mathrm{d}r,\\
    \tilde{I}_{k}(t,\vec{x})=: &\int_{(\phi_{i_{k-1}}(|x_{i_{k-1}}|^{-2}))^{-1}}^{(\phi_{i_{k}}(|x_{i_{k}}|^{-2}))^{-1}} \left| D^{m_{i_{1}}}_{x_{i_{1}}}\cdots D^{m_{i_{\ell_2}}}_{x_{i_{\ell_2}}} P_{2}(r,\vec{x}) \right|  \mathrm{d}r,\quad 2\leq k\leq \ell_2,\\
    \tilde{I}_{\ell_{2}+1}(t,\vec{x}) &=: \int_{(\phi_{i_{\ell_{2}}}(|x_{i_{\ell_{2}}}|^{-2}))^{-1}}^{t^{\alpha}} \left| D^{m_{i_{1}}}_{x_{i_{1}}} \cdots D^{m_{i_{\ell_{2}}}}_{x_{i_{\ell_{2}}}} P_{2}(r,\vec{x}) \right| \mathrm{d}r.
\end{align*}

$(i)$ Then we have
\begin{align*}
\tilde{I}_{1}(t,\vec{x}) \leq C \prod_{k=1}^{\ell_{2}} \left( \int_{(\phi_{i_{k}}(|x_{i_{k}}|^{-2}))^{-\frac{1}{\ell_{2}}}}^{2 ^{\frac{1}{\ell_{2}}}t^{\frac{\alpha}{\ell_{2}}}} \left( \phi_{i_{k}}^{-1}(r^{-\ell_{2}}) \right)^{\frac{d_{i_{k}}+m_{i_{k}}}{2}} \mathrm{d}r \right) =: C\, \Lambda^{\ell_{2},\vec{m},1}_{\alpha,\beta}(t,x_{i_{1}},\dots,x_{i_{\ell_{2}}}),
\end{align*}
and
\begin{align*}
\tilde{I}_{\ell_{2}+1}(t,\vec{x}) \leq C \int_{(\phi_{i_{\ell_{2}}}(|x_{i_{\ell_{2}}}|^{-2}))^{-1}}^{2t^{\alpha}} \left( \prod_{k=1}^{\ell_{2}} (\phi^{-1}_{i_{k}}(r^{-1}))^{\frac{d_{i_{k}}+m_{i_{k}}}{2}} \right) =: C \,\Lambda^{\ell_{2},\vec{m},2}_{\alpha,\beta}(t,x_{i_{1}},\dots,x_{i_{\ell_{2}}}),
\end{align*}
where the constant $C$ depends only on $\alpha,\beta,d,c_{0},\delta_{0},\ell_{2},m_{1_{1}},\dots,m_{i_{\ell_{2}}}$.

$(ii)$ Then we have
\begin{align*}
\tilde{I}_{k}(t,\vec{x})
\leq C \, \lambda^{\ell_{2},\vec{m},k}_{\alpha,\beta}(t,x_{i_{1}},\dots,x_{i_{\ell_{2}}}),
\end{align*}
where the constant $C$ depends only on $\alpha,\beta,d,c_{0},\delta_{0},\ell_{2},m_{1_{1}},\dots,m_{i_{\ell_{2}}}$ and
\begin{align}\label{eqn 12.19.13:55}
&\lambda^{\ell_{2},m,k}_{\alpha,\beta}(t,x_{i_{1}},\dots,x_{i_{\ell_{2}}})  \nonumber
\\
=& \int_{(\phi_{i_{k-1}}(|x_{i_{k-1}}|^{-2}))^{-\frac{1}{\ell_{2}}}}^{2^{\frac{1}{\ell_{2}}}t^{\frac{\alpha}{\ell_{2}}}} \left( \prod_{n=1}^{k-1}(\phi^{-1}_{i_{n}}(r^{-\ell_{2}}))^{\frac{d_{i_{n}}+m_{i_{n}}}{2}} \right)   r^{k-2} \mathrm{d}r    \nonumber
\\
& \times \prod_{n=k}^{\ell_{2}} \Bigg( \int_{(\phi_{i_{n}}(|x_{i_{n}}|^{-2}))^{-\frac{1}{\ell_{2}}}}^{2^{\frac{1}{\ell_{2}}}t^{\frac{\alpha}{\ell_{2}}}} \left( \phi_{i_{n}}^{-1}(r^{-\ell_{2}}) \right)^{\frac{d_{i_{n}}+m_{i_{n}}}{2}} \mathrm{d}r \Bigg).
\end{align} 
\end{lemma}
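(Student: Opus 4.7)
The starting observation is that since $p_{i_k}(r,x_{i_k})$ depends on $\vec{x}$ only through the $x_{i_k}$-slot, we have the clean factorization
\begin{equation*}
D^{m_{i_1}}_{x_{i_1}}\cdots D^{m_{i_{\ell_2}}}_{x_{i_{\ell_2}}} P_{2}(r,\vec{x}) \;=\; \prod_{k=1}^{\ell_2} D^{m_{i_k}}_{x_{i_k}} p_{i_k}(r,x_{i_k}).
\end{equation*}
Theorem \ref{pestimate}(i) supplies two competing bounds for each factor: the \emph{near-diagonal} bound $C(\phi_{i_k}^{-1}(r^{-1}))^{(d_{i_k}+m_{i_k})/2}$, sharp when $r\phi_{i_k}(|x_{i_k}|^{-2})\ge 1$, and the \emph{off-diagonal} bound $Cr^{1/2}(\phi_{i_k}(|x_{i_k}|^{-2}))^{1/2}|x_{i_k}|^{-(d_{i_k}+m_{i_k})}$, sharp when $r\phi_{i_k}(|x_{i_k}|^{-2})\le 1$. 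The chosen $r$-partition $0<(\phi_{i_1})^{-1}<\cdots<(\phi_{i_{\ell_2}})^{-1}<t^{\alpha}$, forced by the ordering hypothesis on the $y_k:=\phi_{i_k}(|x_{i_k}|^{-2})$, is precisely the partition on which every factor is unambiguously in one of the two regimes.

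For $\tilde I_1$, every $r$ lies below every threshold, so every factor is off-diagonal: multiplying the off-diagonal bounds and integrating $r^{\ell_2/2}$ over $(0,y_1^{-1})$ gives the direct estimate
\begin{equation*}
\tilde I_1 \;\le\; C\,y_1^{-\ell_2/2-1}\,\prod_{k=1}^{\ell_2}\frac{y_k^{1/2}}{|x_{i_k}|^{d_{i_k}+m_{i_k}}}.
\end{equation*}
To identify this with $C\,\Lambda^{\ell_2,\vec{m},1}_{\alpha,\beta}$, I lower-bound each factor of $\Lambda^{\ell_2,\vec{m},1}_{\alpha,\beta}$. The integrand $r\mapsto(\phi_{i_k}^{-1}(r^{-\ell_2}))^{(d_{i_k}+m_{i_k})/2}$ is decreasing, and the hypothesis $t^{\alpha}y_{\ell_2}\ge 1$ (so $y_k\ge t^{-\alpha}$ for all $k$) guarantees the interval $[y_k^{-1/\ell_2},2^{1/\ell_2}y_k^{-1/\ell_2}]$ sits inside the integration interval. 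Restricting to this interval and invoking the WLS scaling \eqref{eqn 07.15.14:49} to conclude $\phi_{i_k}^{-1}(y_k/2)\ge c\,|x_{i_k}|^{-2}$ yields
\begin{equation*}
\int_{y_k^{-1/\ell_2}}^{2^{1/\ell_2}t^{\alpha/\ell_2}}(\phi_{i_k}^{-1}(r^{-\ell_2}))^{(d_{i_k}+m_{i_k})/2}\,dr \;\ge\; c\,y_k^{-1/\ell_2}\,|x_{i_k}|^{-(d_{i_k}+m_{i_k})}.
\end{equation*}
The comparison then collapses, via $y_k\le y_1$, to the power identity $\ell_2(1/\ell_2+1/2)=\ell_2/2+1$.

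For $\tilde I_{\ell_2+1}$, every $r\in((\phi_{i_{\ell_2}})^{-1},t^{\alpha})$ lies above every threshold, so every factor is near-diagonal; multiplying and integrating the near-diagonal bounds produces exactly the integrand of $\Lambda^{\ell_2,\vec{m},2}_{\alpha,\beta}$, which dominates because its upper limit $2t^{\alpha}$ exceeds $t^{\alpha}$ and the integrand is nonnegative. For $\tilde I_k$ with $2\le k\le\ell_2$, factors $n=1,\dots,k-1$ are near-diagonal and factors $n=k,\dots,\ell_2$ are off-diagonal, giving
\begin{equation*}
\tilde I_k \;\le\; C\,\Bigl(\prod_{n=k}^{\ell_2}\frac{y_n^{1/2}}{|x_{i_n}|^{d_{i_n}+m_{i_n}}}\Bigr)\int_{y_{k-1}^{-1}}^{y_k^{-1}} r^{(\ell_2-k+1)/2}\prod_{n=1}^{k-1}(\phi_{i_n}^{-1}(r^{-1}))^{(d_{i_n}+m_{i_n})/2}\,dr.
\end{equation*}
The product $\prod_{n=k}^{\ell_2}\int(\phi_{i_n}^{-1}(r^{-\ell_2}))^{(d_{i_n}+m_{i_n})/2}\,dr$ appearing inside $\lambda^{\ell_2,\vec{m},k}_{\alpha,\beta}$ is lower-bounded by $c\prod_{n=k}^{\ell_2}y_n^{-1/\ell_2}|x_{i_n}|^{-(d_{i_n}+m_{i_n})}$ exactly as in the analysis of $\tilde I_1$. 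The remaining integral $\int r^{k-2}\prod_{n=1}^{k-1}(\phi_{i_n}^{-1}(r^{-\ell_2}))^{(d_{i_n}+m_{i_n})/2}\,dr$ is processed by the substitution $u=r^{\ell_2}$, converting $r^{k-2}\,dr$ into $\ell_2^{-1}u^{(k-1)/\ell_2-1}\,du$; restricting to $u\in(y_{k-1}^{-1},y_k^{-1})$ reduces the required estimate to the pointwise inequality $r^{(\ell_2-k+1)/2}\prod_{n=k}^{\ell_2}y_n^{1/2}\le C\,r^{(k-1)/\ell_2-1}\prod_{n=k}^{\ell_2}y_n^{-1/\ell_2}$ for $r\le y_k^{-1}$, and a power count using $y_n\le y_k$ for $n\ge k$ closes the bound with equality of exponents.

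The principal hurdle is precisely this translation: the naive off/near-diagonal calculation produces a single $r$-integral multiplied by explicit powers of $y_k$ and $|x_{i_k}|$, whereas $\Lambda^{\ell_2,\vec{m},1}_{\alpha,\beta}$ and $\lambda^{\ell_2,\vec{m},k}_{\alpha,\beta}$ are products of $r$-integrals with different and delicate lower limits. Bridging the two requires a coordinated use of the monotonicity of $r\mapsto\phi_{i_k}^{-1}(r^{-\ell_2})$, the WLS scaling \eqref{eqn 07.15.14:49}, the hypothesis $t^{\alpha}y_{\ell_2}\ge 1$ (to ensure that every integration interval appearing in $\Lambda$ and $\lambda$ is non-degenerate), and the ordering $y_1\ge\cdots\ge y_{\ell_2}$ to reabsorb the $\prod_{n=k}^{\ell_2}y_n^{1/2}$ tail; once these pieces are lined up, the remainder is elementary power counting.
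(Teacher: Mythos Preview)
Your proposal is correct and follows essentially the same route as the paper: factor via Theorem~\ref{pestimate}(i), apply the off-diagonal bound on all factors for $\tilde I_1$, the near-diagonal bound on all factors for $\tilde I_{\ell_2+1}$, and the mixed bounds for $\tilde I_k$, then convert the resulting powers of $y_k=\phi_{i_k}(|x_{i_k}|^{-2})$ into the integral factors of $\Lambda$ and $\lambda$ via the key lower bound (the paper's \eqref{eqn 06.05.15:11}) combined with the ordering $y_1\ge\cdots\ge y_{\ell_2}$ and the substitution $r\leftrightarrow r^{\ell_2}$. The only cosmetic difference is that the paper performs the change of variable $r\to r^{\ell_2}$ on $\tilde I_k$ to match $\lambda$, whereas you perform it on $\lambda$ to match $\tilde I_k$; the power count is identical.
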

\begin{proof}
$(i)$ The estimate of $\tilde{I}_{\ell_{2}+1}$ directly follows from \eqref{pestimate ineq}.
For $\tilde{I}_{1}$, using \eqref{pestimate ineq}, and \eqref{eqn 06.05.14:34}, we have
\begin{align}\label{eqn 06.02.16:41}
\tilde{I}_{1}(t,\vec{x}) &\leq C \int_{0}^{(\phi_{i_{1}}(|x_{i_{1}}|^{-2}))^{-1}} r^{\frac{\ell_{2}}{2}} \mathrm{d}r \left(  \prod_{k=1}^{\ell_{2}}  \frac{(\phi_{i_{k}}(|x_{i_{k}}|^{-2}))^{1/2}}{|x_{i_{k}}|^{d_{i_{k}}+m_{i_{k}}}}  \right)  \nonumber
\\
& = C  (\phi_{i_{1}}(|x_{i_{1}}|^{-2}))^{-\frac{\ell_{2}+2}{2}}  \left(\prod_{k=1}^{\ell_{2}}  \frac{(\phi_{i_{k}}(|x_{i_{k}}|^{-2}))^{1/2}}{|x_{i_{k}}|^{d_{i_{k}}+m_{i_{k}}}} \right)  \nonumber
\\
& = C \left( (\phi_{i_{1}}(|x_{i_{1}}|^{-2}))^{-\frac{1}{2}-\frac{1}{\ell_{2}}} \right)^{\ell_{2}} \left(\prod_{k=1}^{\ell_{2}}  \frac{(\phi_{i_{k}}(|x_{i_{k}}|^{-2}))^{1/2}}{|x_{i_{k}}|^{d_{i_{k}}+m_{i_{k}}}} \right)   \nonumber
\\
&\leq C   \prod_{k=1}^{\ell_{2}}  \frac{(\phi_{i_{k}}(|x_{i_{k}}|^{-2}))^{-\frac{1}{\ell_{2}}}}{|x_{i_{k}}|^{d_{i_{k}}+m_{i_{k}}}} .
\end{align}
Note that for any $1\leq k\leq \ell_2$,
\begin{align}\label{eqn 06.05.15:11}
&\left( \phi_{i}(|x_{i_{k}}|^{-2}) \right)^{-\frac{1}{\ell_{2}}} |x_{i_{k}}|^{-d_{i_{k}}-m_{i_{k}}}  \nonumber
\\
=& \left( 2^{\frac{1}{\ell_{2}}} -  1 \right) \int_{( \phi_{i_{k}}(|x_{i_{k}}|^{-2}) )^{-\frac{1}{\ell_{2}}}}^{2^{\frac{1}{\ell_{2}}}( \phi_{i_{k}}(|x_{i_{k}}|^{-2}) )^{-\frac{1}{\ell_{2}}}} |x_{i}|^{-d_{i_{k}}-m_{i_{k}}} \mathrm{d}r \nonumber
\\
\leq& C \int_{( \phi_{i_{k}}(|x_{i_{k}}|^{-2}) )^{-\frac{1}{\ell_{2}}}}^{2^{\frac{1}{\ell_{2}}}t^{\frac{\alpha}{\ell_{2}}}} (\phi^{-1}_{i_{k}}(r^{-\ell_{2}}))^{\frac{d_{i_{k}}+m_{i_{k}}}{2}} \mathrm{d}r \qquad (t^{\alpha}\phi_{i_{k}}(|x_{i_{k}}|^{-2}) \geq 1),
\end{align}
where for the last inequality, we used the fact that (recall \eqref{phiratio})
\begin{equation*}
|x_{i_{k}}|^{-2} \leq  C \phi^{-1}_{i_{k}}(r^{-\ell_{2}}) \quad \text{for} \quad r\leq 2(\phi_{i_{k}}(|x_{i_{k}}|^{-2}))^{-1/\ell_{2}}.
\end{equation*}
Applying \eqref{eqn 06.05.15:11} to \eqref{eqn 06.02.16:41}, we have
\begin{align*}
\tilde{I}_{1} (t,\vec{x}) & \leq C     \prod_{k=1}^{\ell_{2}} \left( \int_{(\phi_{i_{k}}(|x_{i_{k}}|^{-2}))^{-\frac{1}{\ell_{2}}}}^{2 ^{\frac{1}{\ell_{2}}}t^{\frac{\alpha}{\ell_{2}}}} \left( \phi_{i_{k}}^{-1}(r^{-\ell_{2}}) \right)^{\frac{d_{i_{k}}+m_{i_{k}}}{2}} \mathrm{d}r \right) \nonumber
\\
& := \Lambda^{\ell_{2},\vec{m},1}_{\alpha,\beta}(t,x_{i_{1}},\dots,x_{i_{\ell_{2}}}).
\end{align*}

$(ii)$ By \eqref{pestimate ineq}, the change of variable $r\to r^{\ell_{2}}$, and \eqref{eqn 06.05.15:11}, we have 
\begin{align}\label{eqn 12.16.18:19}
\tilde{I}_{k}(t,\vec{x}) &\leq C \int_{(\phi_{i_{k-1}}(|x_{i_{k-1}}|^{-2}))^{-1}}^{(\phi_{i_{k}}(|x_{i_{k}}|^{-2}))^{-1}} \left( \prod_{n=1}^{k-1}(\phi^{-1}_{i_{n}}(r^{-1}))^{\frac{d_{i_{n}}+m_{i_{n}}}{2}} \right) \left(\prod_{n=k}^{\ell_{2}} r^{\frac{1}{2}} \frac{(\phi_{i_{n}}(|x_{i_{n}}|^{-2}))^{\frac{1}{2}}}{|x_{i_{n}}|^{d_{i_{n}}+m_{i_{n}}}} \right) \mathrm{d}r       \nonumber
\\
&= C \int_{(\phi_{i_{k-1}}(|x_{i_{k-1}}|^{-2}))^{-\frac{1}{\ell_{2}}}}^{(\phi_{i_{k}}(|x_{i_{k}}|^{-2}))^{-\frac{1}{\ell_{2}}}} \left( \prod_{n=1}^{k-1}(\phi^{-1}_{i_{n}}(r^{-\ell_{2}}))^{\frac{d_{i_{n}}+m_{i_{n}}}{2}} \right) \left(\prod_{n=k}^{\ell_{2}} r^{\frac{\ell_{2}}{2}} \frac{(\phi_{i_{n}}(|x_{i_{n}}|^{-2}))^{\frac{1}{2}}}{|x_{i_{n}}|^{d_{i_{n}}+m_{i_{n}}}} \right)  r^{\ell_{2}-1} \mathrm{d}r         \nonumber
\\
&= C \int_{(\phi_{i_{k-1}}(|x_{i_{k-1}}|^{-2}))^{-\frac{1}{\ell_{2}}}}^{(\phi_{i_{k}}(|x_{i_{k}}|^{-2}))^{-\frac{1}{\ell_{2}}}} \left( \prod_{n=1}^{k-1}(\phi^{-1}_{i_{n}}(r^{-\ell_{2}}))^{\frac{d_{i_{n}}+m_{i_{n}}}{2}} \right) r^{k-2} \left(\prod_{n=k}^{\ell_{2}} r^{1+\frac{\ell_{2}}{2}} \frac{(\phi_{i_{n}}(|x_{i_{n}}|^{-2}))^{\frac{1}{2}}}{|x_{i_{n}}|^{d_{i_{n}}+m_{i_{n}}}} \right)  \mathrm{d}r          \nonumber
\\
&\leq C \int_{(\phi_{i_{k-1}}(|x_{i_{k-1}}|^{-2}))^{-\frac{1}{\ell_{2}}}}^{(\phi_{i_{k}}(|x_{i_{k}}|^{-2}))^{-\frac{1}{\ell_{2}}}} \left( \prod_{n=1}^{k-1}(\phi^{-1}_{i_{n}}(r^{-\ell_{2}}))^{\frac{d_{i_{n}}+m_{i_{n}}}{2}} \right)  r^{k-2} \mathrm{d}r  \left(\prod_{n=k}^{\ell_{2}}  \frac{(\phi_{i_{n}}(|x_{i_{n}}|^{-2}))^{-\frac{1}{\ell_{2}}}}{|x_{i_{n}}|^{d_{i_{n}}+m_{i_{n}}}} \right).
\end{align}
Applying \eqref{eqn 06.05.15:11} to \eqref{eqn 12.16.18:19}, for $2 \leq k \leq \ell_{2}$, we have
\begin{align*}
& \tilde{I}_{k}(t,\vec{x})
\\
\leq& C \int_{(\phi_{i_{k-1}}(|x_{i_{k-1}}|^{-2}))^{-\frac{1}{\ell_{2}}}}^{(\phi_{i_{k}}(|x_{i_{k}}|^{-2}))^{-\frac{1}{\ell_{2}}}} \left( \prod_{n=1}^{k-1}(\phi^{-1}_{i_{n}}(r^{-\ell_{2}}))^{\frac{d_{i_{n}}+m_{i_{n}}}{2}} \right)  r^{k-2} \mathrm{d}r  \left(\prod_{n=k}^{\ell_{2}}  \frac{(\phi_{i_{n}}(|x_{i_{n}}|^{-2}))^{-\frac{1}{\ell_{2}}}}{|x_{i_{n}}|^{d_{i_{n}}+m_{i_{n}}}} \right)
\\
\leq& C  \int_{(\phi_{i_{k-1}}(|x_{i_{k-1}}|^{-2}))^{-\frac{1}{\ell_{2}}}}^{(\phi_{i_{k}}(|x_{i_{k}}|^{-2}))^{-\frac{1}{\ell_{2}}}} \left( \prod_{n=1}^{k-1}(\phi^{-1}_{i_{n}}(r^{-\ell_{2}}))^{\frac{d_{i_{n}}+m_{i_{n}}}{2}} \right)   r^{k-2} \mathrm{d}r \prod_{n=k}^{\ell_{2}} \left( \int_{(\phi_{i_{n}}(|x_{i_{n}}|^{-2}))^{-\frac{1}{\ell_{2}}}}^{2^{\frac{1}{\ell_{2}}}t^{\frac{\alpha}{\ell_{2}}}} \left( \phi_{i_{n}}^{-1}(r^{-\ell_{2}}) \right)^{\frac{d_{i_{n}}+m_{i_{n}}}{2}} \mathrm{d}r \right)
\\
\leq& C \int_{(\phi_{i_{k-1}}(|x_{i_{k-1}}|^{-2}))^{-\frac{1}{\ell_{2}}}}^{2^{\frac{1}{\ell_{2}}}t^{\frac{\alpha}{\ell_{2}}}} \left( \prod_{n=1}^{k-1}(\phi^{-1}_{i_{n}}(r^{-\ell_{2}}))^{\frac{d_{i_{n}}+m_{i_{n}}}{2}} \right)   r^{k-2} \mathrm{d}r \prod_{n=k}^{\ell_{2}} \left( \int_{(\phi_{i_{n}}(|x_{i_{n}}|^{-2}))^{-\frac{1}{\ell_{2}}}}^{2^{\frac{1}{\ell_{2}}}t^{\frac{\alpha}{\ell_{2}}}} \left( \phi_{i_{n}}^{-1}(r^{-\ell_{2}}) \right)^{\frac{d_{i_{n}}+m_{i_{n}}}{2}} \mathrm{d}r \right)
\\
=& C \, \lambda^{\ell_{2},\vec{m},k}_{\alpha,\beta}(t,x_{i_{1}},\dots,x_{i_{\ell_{2}}}).
\end{align*}
This completes the proof of Lemma.
\end{proof}

\begin{proof}[Proof of Theorem \ref{lem 06.01.17:52}]
We denote $\vec{m}_{1}=:(m_{j_{1}},\dots,m_{j_{\ell_{1}}})$, $\vec{m}_{2}=:(m_{i_{1}},\dots,m_{i_{\ell_{2}}})$, and similarly define $D^{\vec{m}_{i}}_{\vec{x}}$ for $i=1,2$. If we let
$$
P_{1}(t,\vec{x}) =: \prod_{k=1}^{\ell_{1}} p_{j_{k}}(t,x_{j_{k}}), \quad P_{2}(t,\vec{x}) =: \prod_{k=1}^{\ell_{2}} p_{i_{k}}(t,x_{i_{k}}),
$$
then under the above setting, we see that $D^{\vec{m}}_{\vec{x}}p(t,\vec{x}) = D^{\vec{m}_{1}}_{\vec{x}}P_{1}(t,\vec{x}) D^{\vec{m}_{2}}_{\vec{x}} P_{2}(t,\vec{x})$. Thus we have
\begin{align*}
&\left|  D^{\vec{m}}_{\vec{x}} q_{\alpha,\beta}(t,\vec{x}) \right|
\\
\leq&  \int_{0}^{t^{\alpha}} \left|  D^{\vec{m_{1}}}_{\vec{x}} P_{1}(r,\vec{x}) \right| \left|  D^{\vec{m_{2}}}_{\vec{x}} P_{2}(r,\vec{x}) \right| \left| \varphi_{\alpha,\beta}(t,r) \right| \mathrm{d}r
 +  \int_{t^{\alpha}}^{\infty}  \left|  D^{\vec{m_{1}}}_{\vec{x}} P_{1}(r,\vec{x}) \right| \left|  D^{\vec{m_{2}}}_{\vec{x}} P_{2}(r,\vec{x}) \right| \left| \varphi_{\alpha,\beta}(t,r) \right| \mathrm{d}r
\\
:=& J_{1}(t,\vec{x}) + J_{2}(t,\vec{x}).
\end{align*}
We first consider $J_{2}(t,\vec{x})$. By Theorem \ref{pestimate}, representation \eqref{eqn 06.24.15:02} (with bound \eqref{eqn 06.01.15:26} of $\varphi_{\alpha,\beta}$), and change of variable $t^{-\alpha}r \to r$, we have
\begin{align}\label{eqn 06.09.15:12}
&J_{2}(t,\vec{x})  \nonumber
\\
&\leq C \left( \prod_{k=1}^{\ell_{1}} \frac{(\phi_{j_{k}}(|x_{j_{k}}|^{-2}))^{\frac{1}{2}}}{|x_{j_{k}}|^{d_{j_{k}}+m_{j_{k}}}} \right) \left( \int_{t^{\alpha}}^{\infty} r^{\frac{\ell_{1}}{2}} \left( \prod_{k=1}^{\ell_{2}} (\phi^{-1}_{i_{k}}(r^{-1}))^{\frac{d_{i_{k}}+m_{i_{k}}}{2}} \right) t^{-\beta} e^{-c (rt^{-\alpha})^{1/(1-\alpha)}} \mathrm{d}r \right)  \nonumber
\\
&\leq C  \left( \prod_{k=1}^{\ell_{1}} \frac{(\phi_{j_{k}}(|x_{j_{k}}|^{-2}))^{\frac{1}{2}}}{|x_{j_{k}}|^{d_{j_{k}}+m_{j_{k}}}} \right) \int_{t^{\alpha}}^{\infty} r^{\frac{\ell_{1}}{2}} \left( \prod_{k=1}^{\ell_{2}} (\phi^{-1}_{i_{k}}(t^{-\alpha}))^{\frac{d_{i_{k}}+m_{i_{k}}}{2}} \right)  t^{-\beta} e^{-c (rt^{-\alpha})^{1/(1-\alpha)}} \mathrm{d}r  \nonumber
\\
&= C  \left( \prod_{k=1}^{\ell_{1}} \frac{(\phi_{j_{k}}(|x_{j_{k}}|^{-2}))^{\frac{1}{2}}}{|x_{j_{k}}|^{d_{j_{k}}+m_{j_{k}}}} \right) \int_{1}^{\infty} (rt^{\alpha})^{\frac{\ell_{1}}{2}} \left( \prod_{k=1}^{\ell_{2}}  (\phi^{-1}_{i_{k}}(t^{-\alpha}))^{\frac{d_{i_{k}}+m_{i_{k}}}{2}} \right) t^{-\beta} e^{-cr^{1/(1-\alpha)}} t^{\alpha} \mathrm{d}r  \nonumber
\\
& \leq C t^{\frac{\ell_{1}\alpha}{2}-\beta}  \left( \prod_{k=1}^{\ell_{1}} \frac{(\phi_{j_{k}}(|x_{j_{k}}|^{-2}))^{\frac{1}{2}}}{|x_{j_{k}}|^{d_{j_{k}}+m_{j_{k}}}} \right) \left( t^{\alpha} \prod_{k=1}^{\ell_{2}}  (\phi^{-1}_{i_{k}}(t^{-\alpha}))^{\frac{d_{i_{k}}+m_{i_{k}}}{2}} \right) \nonumber
\\
&= C  t^{\frac{\ell_{1}\alpha}{2}-\beta}  \left( \prod_{k=1}^{\ell_{1}} \frac{(\phi_{j_{k}}(|x_{j_{k}}|^{-2}))^{\frac{1}{2}}}{|x_{j_{k}}|^{d_{j_{k}}+m_{j_{k}}}} \right) \int_{t^{\alpha}}^{2t^{\alpha}}   \left( \prod_{k=1}^{\ell_{2}}  (\phi^{-1}_{i_{k}}(t^{-\alpha}))^{\frac{d_{i_{k}}+m_{i_{k}}}{2}} \right) \mathrm{d}r.
\end{align} 
Using \eqref{eqn 07.15.14:49}, we see that
$$
\phi^{-1}_{i_{k}}(t^{-\alpha}) \leq c_{0}^{-1} (rt^{-\alpha})^{1/\delta_{0}} \phi^{-1}_{i_{k}}(r^{-1}) \leq c^{-1}_{0} 2^{1/\delta_{0}} \phi^{-1}_{i_{k}}(r^{-1}) \quad \forall\, t^{\alpha}<r<2t^{\alpha}.
$$
Hence, we have
\begin{align*}
\int_{t^{\alpha}}^{2t^{\alpha}}   \left( \prod_{k=1}^{\ell_{2}}  (\phi^{-1}_{i_{k}}(t^{-\alpha}))^{\frac{d_{i_{k}}+m_{i_{k}}}{2}} \right) \mathrm{d}r
&\leq C  \int_{t^{\alpha}}^{2t^{\alpha}}   \left( \prod_{k=1}^{\ell_{2}}  (\phi^{-1}_{i_{k}}(r^{-1})^{\frac{d_{i_{k}}+m_{i_{k}}}{2}} \right) \mathrm{d}r 
\\
&\leq C  \int_{(\phi_{i_{\ell_{2}}}(|x_{i_{\ell_{2}}}|^{-2}))^{-1}}^{2t^{\alpha}}   \left( \prod_{k=1}^{\ell_{2}}  (\phi^{-1}_{i_{k}}(r^{-1})^{\frac{d_{i_{k}}+m_{i_{k}}}{2}} \right) \mathrm{d}r
\\
&= C \Lambda^{\ell_{2},\vec{m},2}_{\alpha,\beta}(t,x_{i_{1}},\dots,x_{i_{\ell_{2}}}).
\end{align*}
Therefore, using this and \eqref{eqn 06.09.15:12} we have
\begin{align}\label{eqn 06.09.15:16}
J_{2}(t,\vec{x}) \leq C t^{\frac{\ell_{1}\alpha}{2}-\beta} \left( \prod_{k=1}^{\ell_{1}} \frac{(\phi_{j_{k}}(|x_{j_{k}}|^{-2}))^{\frac{1}{2}}}{|x_{j_{k}}|^{d_{j_{k}}+m_{j_{k}}}} \right)  \Lambda^{\ell_{2},\vec{m},2}_{\alpha,\beta}(t,x_{i_{1}},\dots,x_{i_{\ell_{2}}}).
\end{align}
Likewise (use \eqref{eqn 06.01.15:30} in place of \eqref{eqn 06.01.15:26}), we can check that
\begin{align*}
&J_{1}(t,\vec{x})  \nonumber
\\
&\leq C  t^{\frac{\ell_{1}\alpha}{2}-\beta} \left( \prod_{k=1}^{\ell_{1}} \frac{(\phi_{j_{k}}(|x_{j_{k}}|^{-2}))^{\frac{1}{2}}}{|x_{j_{k}}|^{d_{j_{k}}+m_{j_{k}}}} \right)  \int_{0}^{t^{\alpha}}   \left| D^{m_{i_{1}}}_{x_{j_{1}}} \cdots D^{m_{i_{\ell_{2}}}}_{x_{i_{\ell_{2}}}}P_{2}(r,\vec{x}) \right| \mathrm{d}r  \nonumber
\\
& \leq C  t^{\frac{\ell_{1}\alpha}{2}-\beta} \left( \prod_{k=1}^{\ell_{1}} \frac{(\phi_{j_{k}}(|x_{j_{k}}|^{-2}))^{\frac{1}{2}}}{|x_{j_{k}}|^{d_{j_{k}}+m_{j_{k}}}} \right) \int_{0}^{(\phi_{i_{1}}(|x_{i_{1}}|^{-2}))^{-1}} \left| D^{m_{i_{1}}}_{x_{i_{1}}} \cdots D^{m_{i_{\ell_{2}}}}_{x_{i_{\ell_{2}}}}P_{2}(r,\vec{x}) \right|\mathrm{d}r  \nonumber
\\
&\quad  + C  t^{\frac{\ell_{1}\alpha}{2}-\beta} \left( \prod_{k=1}^{\ell_{1}} \frac{(\phi_{j_{k}}(|x_{j_{k}}|^{-2}))^{\frac{1}{2}}}{|x_{j_{k}}|^{d_{j_{k}}+m_{j_{k}}}} \right)\sum_{k=2}^{\ell_{2}} \int_{(\phi_{i_{k-1}}(|x_{i_{k-1}}|^{-2}))^{-1}}^{(\phi_{i_{k}}(|x_{i_{k}}|^{-2}))^{-1}} \left| D^{m_{i_{1}}}_{x_{i_{1}}} \cdots D^{m_{i_{\ell_{2}}}}_{x_{i_{\ell_{2}}}}P_{2}(r,x) \right| \mathrm{d}r  \nonumber
\\
&\quad +  C  t^{\frac{\ell_{1}\alpha}{2}-\beta} \left( \prod_{k=1}^{\ell_{1}} \frac{(\phi_{j_{k}}(|x_{j_{k}}|^{-2}))^{\frac{1}{2}}}{|x_{j_{k}}|^{d_{j_{k}}+m_{j_{k}}}} \right) \int_{(\phi_{i_{\ell_{2}}}(|x_{i_{\ell_{2}}}|^{-2}))^{-1}}^{t^{\alpha}}  \left| D^{m_{i_{1}}}_{x_{i_{1}}} \cdots D^{m_{i_{\ell_{2}}}}_{x_{i_{\ell_{2}}}}P_{2}(r,x) \right|   \mathrm{d}r  \nonumber
\\
& :=  C  t^{\frac{\ell_{1}\alpha}{2}-\beta} \left( \prod_{k=1}^{\ell_{1}} \frac{(\phi_{j_{k}}(|x_{j_{k}}|^{-2}))^{\frac{1}{2}}}{|x_{j_{k}}|^{d_{j_{k}}+m_{j_{k}}}} \right) \left(  \tilde{I}_{1}(t,\vec{x}) + \sum_{k=2}^{\ell_{2}} \tilde{I}_{k}(t,\vec{x}) + \tilde{I}_{\ell_{2}+1}(t,\vec{x})  \right).
\end{align*}
Applying Lemma \ref{lem 12.16.18:23}-$(i)$ (for $\tilde{I}_{1}$ and $\tilde{I}_{\ell_{2}+1}$) and Lemma \ref{lem 12.16.18:23}-$(ii)$ (for other $\tilde{I}_{k}$), we have
\begin{align}\label{eqn 12.19.18:49}
J_{1}(t,\vec{x}) \leq C  t^{\frac{\ell_{1}\alpha}{2}-\beta} \left( \prod_{k=1}^{\ell_{1}} \frac{(\phi_{j_{k}}(|x_{j_{k}}|^{-2}))^{\frac{1}{2}}}{|x_{j_{k}}|^{d_{j_{k}}+m_{j_{k}}}} \right) \sum_{i=1}^{3} \Lambda^{\ell_{2},\vec{m},i}_{\alpha,\beta}(t,x_{i_{1}},\dots,x_{i_{\ell_{2}}}).
\end{align}
We have the desired result by combining \eqref{eqn 06.09.15:16} and \eqref{eqn 12.19.18:49}. The theorem is proved.
\end{proof}

\begin{lemma}\label{lem 06.08.14:52}
For $\alpha\in (0,1)$, $\beta\in \bR$, and $t>0$, we have
\begin{align*}
\int_{\bR^{d}} \left|  q_{\alpha,\beta} (t,\vec{x}) \right| \mathrm{d}\vec{x} \leq C t^{\alpha-\beta},
\end{align*}
where the constant $C$ depends only on $\alpha,\beta,d,c_{0},\delta_{0},\ell$.
\end{lemma}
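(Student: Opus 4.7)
The plan is to exploit the representation \eqref{eqn 06.24.15:02} together with the fact that $p(r,\vec{x})$ is a probability density, reducing the $\vec{x}$-integral estimate of $q_{\alpha,\beta}$ to a purely one-dimensional $r$-integral estimate of $\varphi_{\alpha,\beta}$. Since $p(r,\cdot) \geq 0$, I first move absolute values inside to write
\[
\bigl| q_{\alpha,\beta}(t,\vec{x}) \bigr| \leq \int_{0}^{\infty} p(r,\vec{x}) \, \bigl| \varphi_{\alpha,\beta}(t,r) \bigr| \, \mathrm{d}r,
\]
and then apply Tonelli's theorem together with $\int_{\mathbb{R}^{d}} p(r,\vec{x})\,\mathrm{d}\vec{x} = 1$ to obtain
\[
\int_{\mathbb{R}^{d}} \bigl| q_{\alpha,\beta}(t,\vec{x}) \bigr| \, \mathrm{d}\vec{x} \leq \int_{0}^{\infty} \bigl| \varphi_{\alpha,\beta}(t,r) \bigr| \, \mathrm{d}r.
\]

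The remaining step is to estimate $\int_{0}^{\infty} |\varphi_{\alpha,\beta}(t,r)|\,\mathrm{d}r$, which I would split at $r = t^{\alpha}$. On the regime $r \leq t^{\alpha}$, the bound \eqref{eqn 06.01.15:30} gives $|\varphi_{\alpha,\beta}(t,r)| \leq C t^{-\beta}$ when $\beta \notin \mathbb{N}$, while for $\beta \in \mathbb{N}$ the bound $C r t^{-\alpha-\beta}$ also yields $|\varphi_{\alpha,\beta}(t,r)| \leq C t^{-\beta}$ after using $r \leq t^{\alpha}$. Integrating on $(0,t^{\alpha})$ contributes $C t^{\alpha-\beta}$. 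On the regime $r > t^{\alpha}$, the bound \eqref{eqn 06.01.15:26} combined with the change of variables $s = r t^{-\alpha}$ gives
\[
\int_{t^{\alpha}}^{\infty} \bigl| \varphi_{\alpha,\beta}(t,r) \bigr| \, \mathrm{d}r \leq C t^{-\beta} \int_{t^{\alpha}}^{\infty} \exp\bigl( -c (r t^{-\alpha})^{1/(1-\alpha)} \bigr) \, \mathrm{d}r = C t^{\alpha - \beta} \int_{1}^{\infty} \exp\bigl( -c s^{1/(1-\alpha)} \bigr) \, \mathrm{d}s,
\]
and the last integral is a finite constant depending only on $\alpha$.

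Summing the two contributions yields the desired bound $C t^{\alpha - \beta}$. I do not foresee any serious obstacle: the argument is essentially a Tonelli reduction followed by direct application of the already-stated pointwise estimates \eqref{eqn 06.01.15:26}--\eqref{eqn 06.01.15:30} for $\varphi_{\alpha,\beta}$. The only mild care is to handle both cases $\beta \in \mathbb{N}$ and $\beta \notin \mathbb{N}$ uniformly in the near-zero regime, but as observed above the factor $r \leq t^{\alpha}$ absorbs the discrepancy so that both cases give the same $C t^{\alpha-\beta}$ contribution.
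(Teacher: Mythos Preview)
Your proof is correct and considerably more elementary than the paper's. The paper does not use the Tonelli reduction at all; instead it invokes the pointwise upper bounds on $q_{\alpha,\beta}$ from Theorem~\ref{lem 06.01.17:52}, decomposes $\mathbb{R}^d$ into the regions $A_{\ell_1,\ell_2}(t)$ determined by which coordinates lie in the near-diagonal versus off-diagonal regimes, and then integrates the resulting $\Lambda^{\ell_2,\vec{m}}_{\alpha,\beta}$ functions term by term. Your route bypasses all of this by exploiting two facts that the paper's argument never uses explicitly: that $p(r,\cdot)\geq 0$ and that $\int_{\mathbb{R}^d} p(r,\vec{x})\,\mathrm{d}\vec{x}=1$.

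The trade-off is extensibility. The paper's heavier argument is set up so that its computations can be recycled almost verbatim in Corollary~\ref{lem 02.17.22:32}, where one needs to control $\int_{\mathbb{R}^{d-d_i}} |D^m_{x_i} q_{\alpha,\beta}(t,\vec{x})|\,\mathrm{d}\hat{x}_i$ and $\int_{\mathbb{R}^d}\sup_{t\in[\varepsilon,T]}|q_{\alpha,\beta}(t,\vec{x})|\,\mathrm{d}\vec{x}$. Your Tonelli trick breaks down there: once a spatial derivative hits $p$, nonnegativity is lost and the integral $\int |D^m_{x_i} p_i|$ does not reduce to $1$; likewise, the $\sup_t$ cannot be pushed inside the $r$-integral in a useful way. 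So for Lemma~\ref{lem 06.08.14:52} in isolation your argument is the cleaner one, but the paper's approach is doing double duty for the estimates that follow.
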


\begin{proof}
For each $t>0$, $\ell_{1},\ell_{2} \in \bN_{0}$, and $\{j_{1},\dots,j_{\ell_{1}},i_{1},\dots,i_{\ell_{2}} \}$ given as in Theorem \ref{lem 06.01.17:52}, let $A_{\ell_{1},\ell_{2}}(t)$ be a subset of $\bR^{d} \setminus \{0\}$  satisfying 
\begin{gather*}
1 \leq t^{\alpha}\phi_{i_{\ell_{2}}}(|x_{i_{\ell_{2}}}|^{-2}) \leq \dots \leq t^{\alpha}\phi_{i_{1}}(|x_{i_{1}}|^{-2}),  
\\
t^{\alpha} \phi_{j}(|x_{j}|^{-2}) \leq 1 \quad \forall\, j= j_{1},\dots,j_{\ell_{1}}.  
\end{gather*}
Since 
\begin{align*}
\int_{\bR^{d}} \left|  q_{\alpha,\beta}(t,\vec{x}) \right| \mathrm{d}\vec{x} \leq \sum \int_{A_{\ell_{1},\ell_{2}}(t)}  \left|  q_{\alpha,\beta}(t,\vec{x}) \right| \mathrm{d}\vec{x},
\end{align*}
where the summation is taken over all possible permutations $\{j_{1},\dots,j_{\ell_{1}},i_{1},\dots,i_{\ell_{2}}\}$ of $\{1,\dots,\ell\}$, we only prove 
\begin{align*}
\int_{A_{\ell_{1},\ell_{2}}(t)} |q_{\alpha,\beta}(t,\vec{x})| \mathrm{d}\vec{x} \leq C t^{\alpha-\beta}.
\end{align*}
For simplicity, we define $\tilde{x}:=(x_{i_{1}},\dots,x_{i_{\ell_{2}}})$,
\begin{gather*}
A_{\ell_{1}}(t) := \{(x_{j_{1}},\dots,x_{j_{\ell_{1}}}) : t^{\alpha}\phi_{j_{k}}(|x_{j_{k}}|^{-2}) \leq 1 \quad \forall\, k=1,\dots, \ell_{1}\},
\\
A_{\ell_{2}}(t) := \{\tilde{x}=(x_{i_{1}},\dots,x_{i_{\ell_{2}}}) :1 \leq t^{\alpha}\phi_{i_{\ell_{2}}}(|x_{i_{\ell_{2}}}|^{-2}) \leq \dots \leq t^{\alpha}\phi_{i_{1}}(|x_{i_{1}}|^{-2})\},
\end{gather*}
and (recall \eqref{eqn 12.19.13:55})
\begin{align}\label{eqn 12.19.15:53}
&\Lambda^{\ell_{2},0,3}_{\alpha,\beta}(t,\tilde{x})    \nonumber
\\
&=\sum_{k=2}^{\ell_{2}} \int_{(\phi_{i_{k-1}}(|x_{i_{k-1}}|^{-2}))^{-\frac{1}{\ell_{2}}}}^{2^{\frac{1}{\ell_{2}}}t^{\frac{\alpha}{\ell_{2}}}} \left( \prod_{n=1}^{k-1}(\phi^{-1}_{i_{n}}(r^{-\ell_{2}}))^{\frac{d_{i_{n}}}{2}} \right)   r^{k-2} \mathrm{d}r \prod_{n=k}^{\ell_{2}} \left( \int_{(\phi_{i_{n}}(|x_{i_{n}}|^{-2}))^{-\frac{1}{\ell_{2}}}}^{2^{\frac{1}{\ell_{2}}}t^{\frac{\alpha}{\ell_{2}}}} \left( \phi_{i_{n}}^{-1}(r^{-\ell_{2}}) \right)^{\frac{d_{i_{n}}}{2}} \mathrm{d}r \right)   \nonumber
\\
&:=\sum_{k=2}^{\ell_{2}} \lambda^{\ell_{2},0,k}_{\alpha,\beta}(t,\tilde{x}).
\end{align}
One can directly check that $A_{\ell_{1},\ell_{2}}(t) = A_{\ell_{1}}(t) \times A_{\ell_{2}}(t)$. 
Hence, by \eqref{eqn 06.01.18:14},  with $m=(0, \dots ,0) \in \bN_{0}^{\ell}$, we have

\begin{align}\label{eqn 07.22.18:38}
&\int_{A_{\ell_{1},\ell_{2}}(t)} \left| q_{\alpha,\beta}(t,\vec{x}) \right| \mathrm{d}\vec{x}  \nonumber
\\
\leq& C \int_{A_{\ell_{1},\ell_{2}}(t)}  t^{\frac{\alpha  \ell_{1}}{2}-\beta} \left( \prod_{k=1}^{\ell_{1}} \frac{(\phi_{j_{k}}(|x_{j_{k}}|^{-2}))^{\frac{1}{2}}}{|x_{j_{k}}|^{d_{j_{k}}}}  \right) \Lambda^{\ell_{2},m}_{\alpha,\beta}(t,\tilde{x}) \mathrm{d}\vec{x}    \nonumber
\\
\leq& C  t^{\frac{\alpha  \ell_{1}}{2}-\beta} \int_{A_{\ell_{1}}(t)\times A_{\ell_{2}}(t)} \left( \prod_{k=1}^{\ell_{1}} \frac{(\phi_{j_{k}}(|x_{j_{k}}|^{-2}))^{\frac{1}{2}}}{|x_{j_{k}}|^{d_{j_{k}}}}  \right) \prod_{k=1}^{\ell_{2}} \left( \int_{(\phi_{i_{k}}(|x_{i_{k}}|^{-2}))^{-\frac{1}{\ell_{2}}}}^{2^{\frac{1}{\ell_{2}}}t^{\frac{\alpha}{\ell_{2}}}} \left( \phi_{i_{k}}^{-1}(r^{-\ell_{2}}) \right)^{\frac{d_{i_{k}}}{2}} \mathrm{d}r \right) \mathrm{d}\vec{x}    \nonumber
\\
& + C t^{\frac{\alpha  \ell_{1}}{2}-\beta} \int_{A_{\ell_{1}}(t) \times A_{\ell_{2}}(t)} \left( \prod_{k=1}^{\ell_{1}} \frac{(\phi_{j_{k}}(|x_{j_{k}}|^{-2}))^{\frac{1}{2}}}{|x_{j_{k}}|^{d_{j_{k}}}}  \right) \Lambda^{\ell_{2},0,3}_{\alpha,\beta}(t,\tilde{x}) \mathrm{d}\vec{x}     \nonumber
\\
& + C t^{\frac{\alpha  \ell_{1}}{2}-\beta} \int_{A_{\ell_{1}}(t) \times A_{\ell_{2}}(t)} \left( \prod_{k=1}^{\ell_{1}} \frac{(\phi_{j_{k}}(|x_{j_{k}}|^{-2}))^{\frac{1}{2}}}{|x_{j_{k}}|^{d_{j_{k}}}}  \right)  \left( \int_{(\phi_{i_{\ell_{2}}}(|x_{i_{\ell_{2}}}|^{-2}))^{-1}}^{2t^{\alpha}} \left( \prod_{k=1}^{\ell_{2}} (\phi^{-1}_{i_{k}}(r^{-1}))^{\frac{d_{i_{k}}}{2}} \right)   \mathrm{d}r \right) \mathrm{d}\vec{x}    \nonumber
\\
:=& C  t^{\frac{\alpha  \ell_{1}}{2}-\beta} \left(  I_{1}(t) + I_{2}(t) + I_{3}(t)  \right).
\end{align}
Due to \eqref{int phi},
\begin{align}\label{eqn 06.09.14:40}
\int_{t^{\alpha}\phi_{j_{k}}(|x_{j_{k}}|^{-2}) \leq 1} \frac{(\phi_{j_{k}}(|x_{j_{k}}|^{-2}))^{\frac{1}{2}}}{|x_{j_{k}}|^{d_{j_{k}}}} \mathrm{d}x_{j_{k}}
&= \int_{|x_{j_{k}}| \geq (\phi^{-1}_{j_{k}}(t^{-\alpha}))^{-1/2}} \frac{(\phi_{j_{k}}(|x_{j_{k}}|^{-2}))^{\frac{1}{2}}}{|x_{j_{k}}|^{d_{j_{k}}}} \mathrm{d}x_{j_{k}}   \nonumber
\\
&= C \int_{(\phi^{-1}_{j_{k}}(t^{-\alpha}))^{-1/2}}^{\infty} (\phi_{j_{k}}(\rho^{-2}))^{1/2} \rho^{-1} \mathrm{d} \rho
\leq C t^{-\alpha/2}.
\end{align}
Therefore, by integrating on $A_{\ell_{1}}(t)$ first (use \eqref{eqn 06.09.14:40}), we have
\begin{align*}
&t^{\frac{\alpha \ell_{1}}{2}-\beta}\left( I_{1}(t) + I_{2}(t) + I_{3}(t) \right)
\\
\leq& C t^{-\beta} \int_{A_{\ell_{2}}(t)} \prod_{k=1}^{\ell_{2}} \left( \int_{(\phi_{i_{k}}(|x_{i_{k}}|^{-2}))^{-\frac{1}{\ell_{2}}}}^{2^{\frac{1}{\ell_{2}}}t^{\frac{\alpha}{\ell_{2}}}} \left(  \phi^{-1}_{i_{k}}(r^{-\ell_{2}}) \right)^{\frac{d_{i_{k}}}{2}} \mathrm{d}r \right) \mathrm{d}\tilde{x}
\\
& + C t^{-\beta} \int_{A_{\ell_{2}}(t)}  \Lambda^{\ell_{2}0,3}_{\alpha,\beta}(t,\tilde{x}) \mathrm{d}\tilde{x}
\\
&+ C t^{-\beta} \int_{A_{\ell_{2}}(t)}   \left( \int_{(\phi_{i_{\ell_{2}}}(|x_{i_{\ell_{2}}}|^{-2}))^{-1}}^{2t^{\alpha}} \left( \prod_{k=1}^{\ell_{2}} (\phi^{-1}_{i_{k}}(r^{-1}))^{\frac{d_{i_{k}}}{2}} \right)   \mathrm{d}r \right) \mathrm{d}\tilde{x}.
\end{align*}
We can check that 
\begin{align}\label{eqn 06.09.14:46}
&\int_{t^{\alpha}\phi_{i_{k}}(|x_{i_{k}}|^{-2}) \geq 1}  \int_{(\phi_{i_{k}}(|x_{i_{k}}|^{-2}))^{-\frac{1}{\ell_{2}}}}^{2^{\frac{1}{\ell_{2}}}t^{\frac{\alpha}{\ell_{2}}}} \left( \phi_{i_{k}}^{-1}(r^{-\ell_{2}}) \right)^{\frac{d_{i_{k}}}{2}} \mathrm{d}r \mathrm{d}x_{i_{k}}  \nonumber
\\
=& \int_{|x_{i_{k}}| \leq (\phi^{-1}_{i_{k}}(t^{-\alpha}))^{-1/2}}  \int_{(\phi_{i_{k}}(|x_{i_{k}}|^{-2}))^{-\frac{1}{\ell_{2}}}}^{2^{\frac{1}{\ell_{2}}}t^{\frac{\alpha}{\ell_{2}}}} \left( \phi_{i_{k}}^{-1}(r^{-\ell_{2}}) \right)^{\frac{d_{i_{k}}}{2}} \mathrm{d}r \mathrm{d}x_{i_{k}}  \nonumber
\\
\leq& \int_{0}^{2^{\frac{1}{\ell_{2}}}t^{\alpha/\ell_{2}}} \int_{|x_{i_{k}}| \leq (\phi_{i_{k}}(r^{-\ell_{2}}))^{-1/2}} \left( \phi_{i_{k}}^{-1}(r^{-\ell_{2}}) \right)^{\frac{d_{i_{k}}}{2}} \mathrm{d}x_{i_{k}} \mathrm{d}r =  C t^{\alpha/\ell_{2}}.
\end{align}
Therefore, we have
\begin{align}\label{eqn 02.17.21:26}
t^{\frac{\alpha \ell_{1}}{2}-\beta}I_{1}(t)&\leq C t^{-\beta} \int_{A_{\ell_{2}}(t)} \prod_{k=1}^{\ell_{2}} \left( \int_{(\phi_{i_{k}}(|x_{i_{k}}|^{-2}))^{-\frac{1}{\ell_{2}}}}^{2^{\frac{1}{\ell_{2}}}t^{\frac{\alpha}{\ell_{2}}}} \left(  \phi^{-1}_{i_{k}}(r^{-\ell_{2}}) \right)^{\frac{d_{i_{k}}}{2}} \mathrm{d}r \right) \mathrm{d}\tilde{x}    \nonumber
\\
&=Ct^{-\beta}  \left( \prod_{k=1}^{\ell_{2}} \int_{t^{\alpha}\phi_{i_{k}}(|x_{i_{k}}|^{-2}) \geq 1}  \int_{(\phi_{i_{k}}(|x_{i_{k}}|^{-2}))^{-\frac{1}{\ell_{2}}}}^{2^{\frac{1}{\ell_{2}}}t^{\frac{\alpha}{\ell_{2}}}} \left( \phi_{i_{k}}^{-1}(r^{-\ell_{2}}) \right)^{\frac{d_{i_{k}}}{2}} \mathrm{d}r \mathrm{d}x_{i_{k}}\right)    \nonumber
\\  
&\leq C t^{\alpha-\beta}.
\end{align}
Also, due to the definition of $A_{\ell_{2}}(t)$, we have 
$
(\phi_{i_{1}}(|x_{i_{1}}|^{-2}))^{-1} \leq \cdots \leq (\phi_{i_{\ell_{2}}}(|x_{i_{\ell_{2}}}|^{-2}))^{-1} \leq t^{\alpha}$ on $A_{\ell_{2}}(t)$.
Therefore, we have
\begin{align}\label{eqn 06.09.14:59}
t^{\frac{\alpha \ell_{1}}{2}-\beta}I_{2}(t)&\leq C t^{-\beta}\int_{A_{\ell_{2}}(t)} \int_{(\phi_{i_{\ell_{2}}}(|x_{i_{\ell_{2}}}|^{-2}))^{-1}}^{2t^{\alpha}} \left( \prod_{k=1}^{\ell_{2}} (\phi^{-1}_{i_{k}}(r^{-1}))^{\frac{d_{i_{k}}}{2}} \right)   \mathrm{d}r \mathrm{d}\tilde{x}  \nonumber
\\
&=C t^{-\beta}\int_{A_{\ell_{2}}(t)} \int_{0}^{2t^{\alpha}} \mathbf{1}_{\{(\phi_{i_{1}}(|x_{i_{1}}|^{-2}))^{-1} \leq r\}} \cdots \mathbf{1}_{\{(\phi_{i_{\ell_{2}}}(|x_{i_{\ell_{2}}}|^{-2}))^{-1} \leq r\}} \left( \prod_{k=1}^{\ell_{2}} (\phi^{-1}_{i_{k}}(r^{-1}))^{\frac{d_{i_{k}}}{2}} \right)  \mathrm{d}r \mathrm{d}\tilde{x}  \nonumber
\\
& \leq C t^{-\beta} \int_{0}^{2t^{\alpha}} \int_{|x_{i_{1}}| \leq (\phi^{-1}_{i_{1}}(r^{-1}))^{-1/2}}\cdots  \int_{|x_{i_{\ell_{2}}}| \leq (\phi^{-1}_{i_{\ell_{2}}}(r^{-1}))^{-1/2}}  \left( \prod_{k=1}^{\ell_{2}} (\phi^{-1}_{i_{k}}(r^{-1}))^{\frac{d_{i_{k}}}{2}} \right)  \mathrm{d}x_{i_{\ell_{2}}}\dots \mathrm{d}x_{i_{1}} \mathrm{d}r \nonumber
\\
&=C t^{\alpha-\beta}. 
\end{align}
For each $k=2,\dots,\ell_{2}$, using \eqref{eqn 06.09.14:46}, we have
\begin{align}\label{eqn 02.17.22:14}
&\int_{A_{\ell_{2}}(t)}  \lambda^{\ell_{2},0,k}_{\alpha,\beta}(t,\tilde{x}) \mathrm{d}\tilde{x}  \nonumber
\\
\leq& Ct^{\frac{\alpha(\ell_{2}-k+1)}{\ell_{2}}} \int_{\tilde{A}_{\ell_{2}}(t)}  \int_{(\phi_{i_{n-1}}(|x_{i_{n-1}}|^{-2}))^{-\frac{1}{\ell_{2}}}}^{2^{\frac{1}{\ell_{2}}}t^{\frac{\alpha}{\ell_{2}}}} \left( \prod_{n=1}^{k-1}(\phi^{-1}_{i_{n}}(r^{-\ell_{2}}))^{\frac{d_{i_{n}}}{2}} \right)   r^{k-2} \mathrm{d}r \mathrm{d}x',
\end{align}
where $\tilde{A}_{\ell_{2}}(t) := \{ x'=(x_{i_{1}},\dots,x_{i_{k-1}}):1 \leq t^{\alpha}\phi_{i_{k-1}}(|x_{i_{k-1}}|^{-2}) \leq \dots \leq t^{\alpha}\phi_{i_{1}}(|x_{i_{1}}|^{-2}) \}$. Repeating the argument in \eqref{eqn 06.09.14:59},
\begin{align}\label{eqn 02.17.22:15}
&\int_{\tilde{A}_{\ell_{2}}(t)} \int_{(\phi_{i_{n-1}}(|x_{i_{n-1}}|^{-2}))^{-\frac{1}{\ell_{2}}}}^{2^{\frac{1}{\ell_{2}}}t^{\frac{\alpha}{\ell_{2}}}} \left( \prod_{n=1}^{k-1}(\phi^{-1}_{i_{k}}(r^{-\ell_{2}}))^{\frac{d_{i_{k}}}{2}} \right)   r^{k-2} \mathrm{d}r \mathrm{d}x'  \nonumber
\\
\leq& C \int_{0}^{2^{\frac{1}{\ell_{2}}}t^{\alpha/\ell_{2}}} \int_{|x_{i_{1}}| \leq (\phi^{-1}_{i_{1}}(r^{-\ell_{2}}))^{-1/2}}\dots  \int_{|x_{i_{n-1}}| \leq (\phi^{-1}_{i_{\ell_{2}}}(r^{-\ell_{2}}))^{-1/2}}  \left( \prod_{n=1}^{k-1} (\phi^{-1}_{i_{n}}(r^{-\ell_{2}}))^{\frac{d_{i_{n}}}{2}} \right)   r^{k-2} \mathrm{d}x_{i_{k-1}}\dots \mathrm{d}x_{i_{1}} \mathrm{d}r  \nonumber
\\
\leq& C t^{\frac{\alpha(k-1)}{\ell_{2}}}.
\end{align}
Thus we have
\begin{align*}
t^{-\beta} \int_{A_{\ell_{2}}(t)} \lambda^{\ell_{2},0,k}(t,\tilde{x}) \mathrm{d}\tilde{x} \leq C t^{\alpha-\beta} \quad \forall \, 2\leq k \leq \ell_{2},
\end{align*}
which directly yields (recall \eqref{eqn 12.19.15:53})
\begin{align}\label{eqn 12.19.16:13}
t^{\frac{\alpha \ell_{1}}{2}-\beta}I_{3}(t) \leq C t^{-\beta}\int_{A_{\ell_{2}}(t)}  \Lambda^{\ell_{2},0,3}_{\alpha,\beta}(t,\tilde{x}) \mathrm{d}\vec{x} \leq C t^{\alpha-\beta}.
\end{align}
One gets the desired result by combining \eqref{eqn 02.17.21:26}, \eqref{eqn 06.09.14:59}, and \eqref{eqn 12.19.16:13}. The lemma is proved.
\end{proof}

\begin{corollary}\label{lem 02.17.22:32}
Let $\alpha \in (0,1)$, $\beta\in \bR$, $i \in \{1,\dots, \ell \}$, and let $m$ be a $d_{i}$-dimensional multi-index.
\\
(i) Suppose that $t^{\alpha}\phi_{i}(|x_{i}|^{-2}) \leq 1$. Then we have
\begin{align*}
\int_{\bR^{d-d_{i}}} \left| D^{m}_{x_{i}}q_{\alpha,\beta}(t,\vec{x}) \right| \mathrm{d}x_{1} \cdots \mathrm{d}x_{i-1}\mathrm{d}x_{i+1}\cdots \mathrm{d}x_{\ell} \leq C t^{\frac{3\alpha}{2}-\beta} \frac{(\phi_{i}(|x_{i}|^{-2}))^{1/2}}{|x_{i}|^{d_{i}+m}},
\end{align*}
where the constant $C>0$ depends only on $\alpha,\beta,c_{0},\delta_{0},d,\ell,m$.

(ii) Suppose that $t^{\alpha}\phi_{i}(|x_{i}|^{-2}) \geq 1$. Then we have
\begin{align}\label{eqn 02.17.23:45}
\int_{\bR^{d-d_{i}}} \left| D^{m}_{x_{i}}q_{\alpha,\beta}(t,\vec{x}) \right| \mathrm{d}x_{1} \cdots \mathrm{d}x_{i-1}\mathrm{d}x_{i+1}\cdots \mathrm{d}x_{\ell} \nonumber
& \leq C \sum_{k=1}^{\ell} \left( t^{\alpha-\frac{\alpha}{k}-\beta} \int_{(\phi_{i}(|x_{i}|^{-2}))^{-\frac{1}{k}}}^{2^{\frac{1}{k}}t^{\frac{\alpha}{k}}} \left( \phi^{-1}_{i}(r^{-k}) \right)^{\frac{d_{i}+m}{2}} \mathrm{d}r \right)  \nonumber
\\
& \left( \leq C  t^{\frac{3\alpha}{2}-\beta} \frac{(\phi_{i}(|x_{i}|^{-2}))^{1/2}}{|x_{i}|^{d_{i}+m}} \right),
\end{align}
where the constant $C>0$ depends only on $\alpha,\beta,c_{0},\delta_{0},d,\ell,m$.

(iii) For any $0<\varepsilon<T$, we have
\begin{align}\label{eqn 06.24.16:34}
\int_{\bR^{d}} \sup_{t \in [\varepsilon,T]}  \left| q_{\alpha,\beta}(t,\vec{x}) \right| \mathrm{d}\vec{x} <C,    
\end{align}
where the constant $C>0$ depends only on $\alpha,\beta,d,c_{0},\delta_{0},\ell,\varepsilon,T$. 
\end{corollary}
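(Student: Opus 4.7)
The plan is to derive parts (i) and (ii) by combining the pointwise bound from Theorem \ref{lem 06.01.17:52} with the integration techniques developed in the proof of Lemma \ref{lem 06.08.14:52}, and to derive part (iii) via a differentiation-in-time argument that reduces to Lemma \ref{lem 06.08.14:52}.

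For part (i), the variable $x_i$ lies in the off-diagonal regime. I would decompose the domain $\bR^{d-d_i}$ according to which of the remaining $\ell-1$ coordinates lie in the off-diagonal or near-diagonal regime (that is, over the configurations $(\ell_1,\ell_2,\text{permutation})$ of Theorem \ref{lem 06.01.17:52} with $i$ among the off-diagonal indices $\{j_1,\dots,j_{\ell_1}\}$) and apply the pointwise estimate on each piece. Since $x_i$ is an off-diagonal variable, the factor $(\phi_i(|x_i|^{-2}))^{1/2}/|x_i|^{d_i+m}$ appears in the off-diagonal product and pulls out of the $\hat{x}_i$-integral. The remaining off-diagonal variables are integrated using \eqref{eqn 06.09.14:40} (contributing a factor of order $t^{-\alpha/2}$ each), and the near-diagonal variables are handled exactly as in the estimates of $I_1,I_2,I_3$ in the proof of Lemma \ref{lem 06.08.14:52}. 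Summing over the finitely many configurations combines the prefactor $t^{\ell_1\alpha/2-\beta}$ with the integration factors to produce the claimed bound with the correct $x_i$-factor.

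For part (ii), the variable $x_i$ lies in the near-diagonal regime, so we may take $i=i_j$ for some position $j$ in the near-diagonal ordering. For each configuration with $\ell_2=k$ near-diagonal variables (containing $i$), I would apply Theorem \ref{lem 06.01.17:52} and integrate over $\hat{x}_i$: the off-diagonal coordinates again integrate to a factor of $t^{-\ell_1\alpha/2}$ cancelling the prefactor $t^{\ell_1\alpha/2}$, while the remaining $k-1$ near-diagonal coordinates $x_{i_n}$ ($n\neq j$) are integrated out of each $\Lambda^{k,\vec{m},\cdot}$-component by interchanging order of integration and applying the scaling bounds \eqref{eqn 06.09.14:46}, \eqref{eqn 02.17.22:14}--\eqref{eqn 02.17.22:15}. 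This preserves only the single factor indexed by $n=j$, which carries the dependence on $x_i$ and yields
\[
\int_{(\phi_i(|x_i|^{-2}))^{-1/k}}^{2^{1/k}t^{\alpha/k}} \bigl(\phi_i^{-1}(r^{-k})\bigr)^{(d_i+m)/2}\,\mathrm{d}r,
\]
multiplied by the appropriate power of $t$. Summing over $k=1,\dots,\ell$ and over the admissible positions of $i$ produces the right-hand side of (ii). The parenthetical upper bound in \eqref{eqn 02.17.23:45} then follows by observing that on the range of integration one has $\phi_i^{-1}(r^{-k})\leq C|x_i|^{-2}$, so the integrand is dominated by $|x_i|^{-d_i-m}$ while the interval length is of order $t^{\alpha/k}$, and the condition $t^\alpha\phi_i(|x_i|^{-2})\geq 1$ allows the $t$-powers to collapse.

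For part (iii), I would exploit the identity $\partial_t\varphi_{\alpha,\beta}(t,r)=\varphi_{\alpha,\beta+1}(t,r)$, which follows from the composition property of the Riemann--Liouville derivative and the definition $\varphi_{\alpha,\beta}=D_t^{\beta-\alpha}\varphi$. Differentiating under the integral in \eqref{eqn 06.24.15:02} yields $\partial_t q_{\alpha,\beta}(t,\vec{x})=q_{\alpha,\beta+1}(t,\vec{x})$, so the fundamental theorem of calculus gives
\[
\sup_{t\in[\varepsilon,T]}|q_{\alpha,\beta}(t,\vec{x})|\leq |q_{\alpha,\beta}(\varepsilon,\vec{x})|+\int_\varepsilon^T|q_{\alpha,\beta+1}(s,\vec{x})|\,\mathrm{d}s.
\]
Integrating in $\vec{x}$, using Fubini, and applying Lemma \ref{lem 06.08.14:52} to each term bounds the total by $C\varepsilon^{\alpha-\beta}+C\int_\varepsilon^T s^{\alpha-\beta-1}\mathrm{d}s$, which is finite for any $0<\varepsilon<T<\infty$. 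The principal obstacle is part (ii): the bookkeeping required to track the position of $i$ within the near-diagonal ordering and correctly isolate the $x_i$-dependent factor in the multi-integral $\Lambda^{\ell_2,\vec{m}}$ while integrating out all the other near-diagonal variables is delicate, and the scaling identities must be applied with the exact integration limits so as to reproduce precisely the expression on the right-hand side of \eqref{eqn 02.17.23:45}.
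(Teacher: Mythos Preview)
Your arguments for parts (i) and (ii) are essentially the paper's proof: apply Theorem~\ref{lem 06.01.17:52}, fix the position of $x_i$ among the off-diagonal or near-diagonal indices, and rerun the integrations from Lemma~\ref{lem 06.08.14:52} while omitting the $x_i$-integration. The parenthetical bound in (ii) is also handled the same way.

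For part (iii), your route is correct but genuinely different from the paper's. The paper bounds $\sup_{t\in[\varepsilon,T]}|q_{\alpha,\beta}(t,\vec{x})|$ \emph{pointwise} in $\vec{x}$ by invoking Theorem~\ref{lem 06.01.17:52} with $\vec{m}=0$, observing that on each region $A_{\ell_1,\ell_2}(\varepsilon)$ the resulting bound is monotone in $t$ so that the supremum is controlled by replacing the prefactors by their values at $\varepsilon$ and the upper integration limits by their values at $T$, and then integrating over $A_{\ell_1,\ell_2}(\varepsilon)$ exactly as in Lemma~\ref{lem 06.08.14:52}. Your approach instead exploits the identity $\partial_t q_{\alpha,\beta}=q_{\alpha,\beta+1}$ and the fundamental theorem of calculus to reduce directly to two applications of Lemma~\ref{lem 06.08.14:52}. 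This is shorter and avoids redoing the region-by-region integration; the only extra ingredient is the justification of differentiation under the integral in \eqref{eqn 06.24.15:02}, which follows from the bounds \eqref{eqn 06.01.15:26}--\eqref{eqn 06.01.15:30} uniformly on $[\varepsilon,T]$. The paper's approach, on the other hand, gives a slightly more explicit pointwise majorant of the supremum, which could be useful if one later needed a dominated-convergence argument at the level of $\sup_t|q_{\alpha,\beta}(t,\cdot)|$ itself.
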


\begin{proof}
As in Theorem \ref{lem 06.01.17:52}, take $0\leq \ell_{1},\ell_{2} \leq \ell$ and let $\{j_{1},\dots,j_{\ell_{1}},i_{1},\dots,i_{\ell_{2}}\}$ be a permutation of $\{1,\dots,\ell\}$.

(i) Since $t^{\alpha}\phi_{i}(|x_{i}|^{-2}) \leq 1$, $\ell_{1} \geq 1$, and $i \in \{j_{1},\dots,j_{\ell_{1}} \}$, without loss of generality, we assume that $i=j_{1}$. Then by following the proof of Lemma \ref{lem 06.08.14:52}, only ignoring the integration with respect to $x_{j_{1}}$, we have the desired result. For example, if we start from \eqref{eqn 02.17.21:26}, and replace 
$$
 t^{\frac{\alpha\ell_{1}}{2}-\beta} \left(\prod_{k=1}^{\ell_{1}}  \int_{t^{\alpha}\phi_{j_{k}}(|x_{j_{k}}|^{-2}) \leq 1} \frac{(\phi_{j_{k}}(|x_{j_{k}}|^{-2}))^{\frac{1}{2}}}{|x|^{d_{j_{k}}+m_{j_{k}}}} \mathrm{d}x_{j_{k}} \right)
$$
by (recall also \eqref{eqn 06.01.18:14})
$$
 t^{\frac{\alpha\ell_{1}}{2}-\beta}\left( \frac{(\phi_{i}(|x_{i}|^{-2}))^{1/2}}{|x_{i}|^{d_{i}+m}} \right) \left(\prod_{k=2}^{\ell_{1}}  \int_{t^{\alpha}\phi_{j_{k}}(|x_{j_{k}}|^{-2}) \leq 1} \frac{(\phi_{j_{k}}(|x_{j_{k}}|^{-2}))^{\frac{1}{2}}}{|x|^{d_{j_{k}}}} \mathrm{d}x_{j_{k}} \right),
$$
we get, instead,  
$$
t^{\frac{\alpha \ell_{1}}{2}-\beta}I_{1}(t) \leq C t^{\frac{3\alpha}{2}-\beta}\frac{(\phi_{i}(|x_{i}|^{-2}))^{1/2}}{|x_{i}|^{d_{i}+m}}.
$$

(ii) Like (i), assume that $i = i_{k}$ for $k \in \{1,\dots,\ell_{2} \}$. Then if we follow \eqref{eqn 02.17.21:26} and ignoring the integration with respect to $x_{i}$, then we have
\begin{align}\label{eqn 02.17.22:22}
t^{\frac{\alpha \ell_{1}}{2}-\beta} I_{1}(t) \leq C t^{\alpha-\frac{\alpha}{\ell_{2}}-\beta} \int_{(\phi_{i}(|x_{i}|^{-2}))^{-\frac{1}{\ell_{2}}}}^{2^{\frac{1}{\ell_{2}}}t^{\frac{\alpha}{\ell_{2}}}} \left( \phi^{-1}_{i}(r^{-\ell_{2}}) \right)^{\frac{d_{i}+m}{2}} \mathrm{d}r.
\end{align}
If we follow the \eqref{eqn 06.09.14:59} and ignoring the integration with respect to $x_{i}$, we get
\begin{align}\label{eqn 02.17.22:22-2}
t^{\frac{\alpha \ell_{1}}{2}-\beta}I_{3}(t) &\leq C t^{-\beta}\int_{(\phi_{i}(|x_{i}|^{-2}))^{-1}}^{2t^{\alpha}} (\phi_{i}(r^{-1}))^{\frac{d_{i}+m}{2}} \mathrm{d}r.
\end{align}
Also, for $n=2,\dots \ell_{2}$, by following \eqref{eqn 02.17.22:14}, and \eqref{eqn 02.17.22:15} ignoring the integration with respect to $x_{i}$, we have
\begin{align*}
&t^{\frac{\alpha \ell_{1}}{2}-\beta} I_{2}(t)
\\
\leq& \mathbf{1}_{n\leq i \leq \ell_{2}} \, C  t^{\alpha-\frac{\alpha}{\ell_{2}}-\beta} \int_{(\phi_{i}(|x_{i}|^{-2}))^{-\frac{1}{\ell_{2}}}}^{2^{\frac{1}{\ell_{2}}}t^{\frac{\alpha}{\ell_{2}}}} (\phi^{-1}_{i}(r^{-\ell_{2}}))^{\frac{d_{i}+m}{2}} \mathrm{d}r
\\
& + \mathbf{1}_{1\leq i \leq n-1} \, C t^{\frac{\alpha(\ell_{2}-n+1)}{\ell_{2}}-\beta} \int_{(\phi_{i}(|x_{i}|^{-2}))^{-\frac{1}{\ell_{2}}}}^{2^{\frac{1}{\ell_{2}}}t^{\frac{\alpha}{\ell_{2}}}} \left( \phi_{i}(r^{-\ell_{2}}) \right)^{\frac{d_{i}+m}{2}} r^{n-2} \mathrm{d}r
\\
\leq& C   t^{\alpha-\frac{\alpha}{\ell_{2}}-\beta} \int_{(\phi_{i}(|x_{i}|^{-2}))^{-\frac{1}{\ell_{2}}}}^{2^{\frac{1}{\ell_{2}}}t^{\frac{\alpha}{\ell_{2}}}} (\phi^{-1}_{i}(r^{-\ell_{2}}))^{\frac{d_{i}+m}{2}}  \mathrm{d}r.
\end{align*}
Combining this with \eqref{eqn 02.17.22:22} \eqref{eqn 02.17.22:22-2}, and then summing those terms with respect to $\ell_{2}=1,\dots,\ell$, we have the desire result. Finally, for the estimation \eqref{eqn 02.17.23:45}, use the fact that $\phi^{-1}_{i}(r^{-\ell_{2}}) \leq |x_{i}|^{-2}$ for $r\geq (\phi_{i}(|x_{i}|^{-2}))^{-1/\ell_{2}}$, and the assumption $t^{-\alpha} \leq \phi_{i}(|x_{i}|^{-2})$. The lemma is proved.

(iii) By Theorem \ref{lem 06.01.17:52} (with $\vec{m}=0$), we have
\begin{align*}
\sup_{t\in [\varepsilon,T]}\left|  q_{\alpha,\beta}(t,\vec{x}) \right| \leq C(\alpha,\beta,d,c_{0},\delta_{0},\varepsilon,T) \left( \prod_{n=1}^{\ell_{1}} \frac{(\phi_{j_{n}}(|x_{j_{n}}|^{-2}))^{\frac{1}{2}}}{|x_{j_{n}}|^{d_{j_{n}}+m_{j_{n}}}}  \right) \Lambda^{\ell_{2},0}_{\alpha,\beta}(T,x_{i_{1}},\dots,x_{i_{\ell_{2}}}),
\end{align*}
where $\Lambda^{\ell_{2},0}_{\alpha,\beta}$ is taken from the statement of Theorem \ref{lem 06.01.17:52}. 
Hence, we have 
\begin{align*}
\int_{\bR^{d}} \sup_{t\in [\varepsilon,T]} \left| q_{\alpha,\beta}(t,\vec{x}) \right| \mathrm{d}\vec{x} 
\leq C \sum \int_{A_{\ell_{1},\ell_{2}}(\varepsilon)} \left( \prod_{n=1}^{\ell_{1}} \frac{(\phi_{j_{n}}(|x_{j_{n}}|^{-2}))^{\frac{1}{2}}}{|x_{j_{n}}|^{d_{j_{n}}+m_{j_{n}}}}  \right) \Lambda^{\ell_{2},0}_{\alpha,\beta}(T,x_{i_{1}},\dots,x_{i_{\ell_{2}}}) \mathrm{d}\vec{x},
\end{align*}
where $A_{\ell_{1},\ell_{2}}(\varepsilon)$ comes from Lemma \ref{lem 06.08.14:52}, and the summation is taken over all possible permutations $\{j_{1},\dots,j_{\ell_{1}},i_{1},\dots,i_{\ell_{2}} \}$ of $\{1,\dots,\ell\}$.
Then splitting each integral in the right-hand side like \eqref{eqn 07.22.18:38}, we have
\begin{align*}
&\int_{A_{\ell_{1},\ell_{2}}(\varepsilon)} \left( \prod_{n=1}^{\ell_{1}} \frac{(\phi_{j_{n}}(|x_{j_{n}}|^{-2}))^{\frac{1}{2}}}{|x_{j_{n}}|^{d_{j_{n}}+m_{j_{n}}}}  \right) \Lambda^{\ell_{2},0}_{\alpha,\beta}(T,x_{i_{1}},\dots,x_{i_{\ell_{2}}}) \mathrm{d}\vec{x}
\\
=&  \int_{A_{\ell_{1}}(\varepsilon)\times A_{\ell_{2}}(\varepsilon)} \left( \prod_{k=1}^{\ell_{1}} \frac{(\phi_{j_{k}}(|x_{j_{k}}|^{-2}))^{\frac{1}{2}}}{|x|^{d_{j_{k}}}}  \right) \prod_{k=1}^{\ell_{2}} \left( \int_{(\phi_{i_{k}}(|x_{i_{k}}|^{-2}))^{-\frac{1}{\ell_{2}}}}^{2^{\frac{1}{\ell_{2}}}T^{\frac{\alpha}{\ell_{2}}}} \left( \phi_{i_{k}}^{-1}(r^{-\ell_{2}}) \right)^{\frac{d_{i_{k}}}{2}} \mathrm{d}r \right) \mathrm{d}\vec{x}
\\
& +  \int_{A_{\ell_{1}}(\varepsilon) \times A_{\ell_{2}}(\varepsilon)} \left( \prod_{k=1}^{\ell_{1}} \frac{(\phi_{j_{k}}(|x_{j_{k}}|^{-2}))^{\frac{1}{2}}}{|x|^{d_{j_{k}}}}  \right) \Lambda^{\ell_{2},0,3}_{\alpha,\beta}(T,\tilde{x}) \mathrm{d}\vec{x}
\\
& + C  \int_{A_{\ell_{1}}(\varepsilon) \times A_{\ell_{2}}(\varepsilon)} \left( \prod_{k=1}^{\ell_{1}} \frac{(\phi_{j_{k}}(|x_{j_{k}}|^{-2}))^{\frac{1}{2}}}{|x|^{d_{j_{k}}}}  \right)  \int_{(\phi_{i_{\ell_{2}}}(|x_{i_{\ell_{2}}}|^{-2}))^{-1}}^{2T^{\alpha}} \left( \prod_{k=1}^{\ell_{2}} (\phi^{-1}_{i_{k}}(r^{-1}))^{\frac{d_{i_{k}}}{2}} \right)   \mathrm{d}r \mathrm{d}\vec{x}
\\
:=&\left(  I_{1} + I_{2} + I_{3}  \right),
\end{align*}
where $A_{\ell_{1}}(\varepsilon)$, $A_{\ell_{2}}(\varepsilon)$ are defined as in Lemma \ref{lem 06.08.14:52}. Following \eqref{eqn 06.09.14:40}, and \eqref{eqn 06.09.14:46} we have
$$
\int_{\varepsilon^{\alpha}\phi_{j_{k}}(|x_{j_{j}}|^{-2})\leq 1} \frac{(\phi_{j_{k}}(|x_{j_{k}}|^{-2}))^{\frac{1}{2}}}{|x_{j_{k}}|^{d_{j_{k}}}} \mathrm{d}x_{j_{k}} \leq C \varepsilon^{-\alpha/2},
$$
and
\begin{align*}
&\int_{\varepsilon^{\alpha}\phi_{i_{k}}(|x_{i_{k}}|^{-2}) \geq 1}  \int_{(\phi_{i_{k}}(|x_{i_{k}}|^{-2}))^{-\frac{1}{\ell_{2}}}}^{2^{\frac{1}{\ell_{2}}}T^{\frac{\alpha}{\ell_{2}}}} \left( \phi_{i_{k}}^{-1}(r^{-\ell_{2}}) \right)^{\frac{d_{i_{k}}}{2}} \mathrm{d}r \mathrm{d}x_{i_{k}} \leq   C T^{\alpha/\ell_{2}}
\end{align*}
respectively. Hence, $I_{1} \leq C$. Similarly, following the argument in \eqref{eqn 06.09.14:59}, \eqref{eqn 02.17.22:14}, and \eqref{eqn 02.17.22:15}, we have $I_{2}+I_{3} \leq C$. 
This completes the proof of corollary.
\end{proof}

\begin{proof}[Proof of Theorem \ref{lem 06.24.15:35}]
    Theorem \ref{lem 06.24.15:35} can be proved by following \cite[Lemma 3.5]{KKL17} using Lemma \ref{lem 06.08.14:52} and Corollary \ref{lem 02.17.22:32}-$(iii)$.
\end{proof}

\mysection{Maximal Regularity Estimates of Solutions in Mixed-Norm Lebesgue Space}\label{25.04.07.15.44}

In this section, we establish maximal regularity estimates for solutions to the equation
\begin{equation*}
    \begin{cases}
    \partial^{\alpha}_{t}u(t,\vec{x}) = \vec{\phi} \cdot \Delta_{\vec{d}} \, u(t,\vec{x}) +f(t,\vec{x}),\quad &(t,\vec{x})\in(0,\infty)\times\mathbb{R}^d,\\
    u(0,\vec{x})=0,\quad &\vec{x}\in\mathbb{R}^d,
    \end{cases}
\end{equation*}
in the mixed-norm space $L_q((0,T);L_p(\mathbb{R}^d))$ for $f\in C_c^{\infty}(\mathbb{R}^{d+1}_+)$, \textit{i.e.},
\begin{equation}
\label{25.03.13.16.55}
\|\vec{\phi}\cdot\Delta_{\vec{d}}\,u\|_{L_q((0,T);L_p(\mathbb{R}^d))}\leq C\|f\|_{L_q((0,T);L_p(\mathbb{R}^d))}.
\end{equation}
To derive \eqref{25.03.13.16.55}, we utilize Theorem \ref{lem 06.24.15:35}, which reduces the problem to prove the boundedness of the solution operator $\mathcal{G}_0$ in $L_q((0,T);L_p(\mathbb{R}^d))$, where $\mathcal{G}_0$ is an operator given by
\begin{align*}
f\mapsto \mathcal{G}_{0}f(t,\vec{x}) = \int_{0}^{t} \int_{\bR^{d}} q_{\alpha,1}(t-s, \vec{x}-\vec{y}) f(s,\vec{y}) \mathrm{d}\vec{y} \mathrm{d}s.
\end{align*}
We now present the main result of this section.
\begin{theorem}\label{thm 07.22.11:31}
Let $1<p,q<\infty$. 
Then for any $f \in C^{\infty}_{c}(\bR^{d+1})$, we have
\begin{equation}\label{eqn 07.02.14:56}
\|\vec{\phi}\cdot\Delta_{\vec{d}}\,\mathcal{G}_0f\|_{L_{q}(\bR;L_{p}(\bR^{d}))} \leq C \|f\|_{L_{q}(\bR;L_{p}(\bR^{d}))},
\end{equation}
where $C=C(\alpha,d,c_{0},\delta_{0},\ell,p,q)$. 
Therefore, the operator $\vec{\phi}\cdot\Delta_{\vec{d}}\,\mathcal{G}_0$ extends continuously to $L_{q}(\bR ;L_{p}(\bR^{d}))$.
\end{theorem}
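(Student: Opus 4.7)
The plan is to establish \eqref{eqn 07.02.14:56} by realizing $\vec{\phi}\cdot\Delta_{\vec{d}}\,\mathcal{G}_{0}$ as a convolution-type singular integral with kernel
\begin{equation*}
K(t,\vec{x}) := \mathbf{1}_{t>0}\,\vec{\phi}\cdot\Delta_{\vec{d}}\,q_{\alpha,1}(t,\vec{x}),
\end{equation*}
which is locally integrable away from the space-time origin by the pointwise bounds of Theorem \ref{lem 06.01.17:52} (applied to one of the $\phi_{i}(\Delta_{x_{i}})$ in turn, which preserves the form of the bound up to an extra factor $\phi_{i}(|\xi_{i}|^{2})$ in Fourier side that is absorbed by the decay obtained from derivatives in $x_{i}$). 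First I would prove an $L_{2}(\mathbb{R}\times\mathbb{R}^{d})$-bound for the operator by taking the Laplace transform in time and the Fourier transform in $\vec{x}$. Using Theorem \ref{lem 06.24.15:35} (so that $q_{\alpha,1}$ is the kernel that inverts $\partial_{t}^{\alpha}-\vec{\phi}\cdot\Delta_{\vec{d}}$), the symbol equals $-m_{\vec{\phi}}(\vec{\xi})/(\lambda^{\alpha}+m_{\vec{\phi}}(\vec{\xi}))$ where $m_{\vec{\phi}}(\vec{\xi})=\sum_{i}\phi_{i}(|\xi_{i}|^{2})$, hence is bounded by $1$ in modulus; Plancherel then gives the $L_{2}$-bound. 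Combining this $L_{2}$-bound with a $BMO$-$L_{\infty}$ estimate and then interpolating via the Fefferman--Stein sharp function theorem yields $L_{p}(\mathbb{R}\times\mathbb{R}^{d})$-boundedness for every $p\in(1,\infty)$. Finally, since the operator is of convolution type in $t$ with an operator-valued kernel satisfying a vector-valued H\"ormander condition in $t$ (again by Corollary \ref{lem 02.17.22:32}), Rubio de Francia extrapolation, or equivalently a vector-valued Calder\'on--Zygmund argument applied with values in $L_{p}(\mathbb{R}^{d})$, upgrades this to mixed-norm $L_{q}(\mathbb{R};L_{p}(\mathbb{R}^{d}))$ for all $p,q\in(1,\infty)$.

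The heart of the argument is the $BMO$-$L_{\infty}$ estimate. Given $\|f\|_{L_{\infty}}\le 1$ and an anisotropic parabolic cylinder of the form
\begin{equation*}
Q_{r}(t_{0},\vec{x}^{0}):=(t_{0}-2r,\,t_{0}+2r)\times\prod_{i=1}^{\ell}B_{\rho_{i}(r)}(x_{i}^{0}),\qquad \rho_{i}(r):=(\phi_{i}^{-1}(r^{-1}))^{-1/2},
\end{equation*}
which respects the scaling $t^{\alpha}\phi_{i}(\rho_{i}^{-2})\simeq 1$, I would split $f=f\mathbf{1}_{Q_{8r}}+f\mathbf{1}_{Q_{8r}^{c}}$ and choose $c_{Q}:=\vec{\phi}\cdot\Delta_{\vec{d}}\,\mathcal{G}_{0}(f\mathbf{1}_{Q_{8r}^{c}})(t_{0},\vec{x}^{0})$. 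The local part is controlled by the $L_{2}$-bound from above and H\"older's inequality, while for the far part the oscillation is estimated by writing
\begin{equation*}
\vec{\phi}\cdot\Delta_{\vec{d}}\,\mathcal{G}_{0}(f\mathbf{1}_{Q_{8r}^{c}})(t,\vec{x})-c_{Q}=\sum_{i=1}^{\ell}\int\!\!\int\phi_{i}(\Delta_{x_{i}})q_{\alpha,1}(\cdot)-\phi_{i}(\Delta_{x_{i}})q_{\alpha,1}(\cdot)\big|_{(t_{0},\vec{x}^{0})}\cdot f\,ds\,d\vec{y},
\end{equation*}
applying the Fundamental Theorem of Calculus to each difference along the $(t,x_{1},\dots,x_{\ell})$-coordinates, and bounding the resulting derivatives $\partial_{t}q_{\alpha,1}$ and $D_{x_{i}}\phi_{i}(\Delta_{x_{i}})q_{\alpha,1}$ by Theorem \ref{lem 06.01.17:52}. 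Decomposing $Q_{8r}^{c}$ into the $\ell$ regions where the $i$-th coordinate $|y_{i}-x_{i}^{0}|$ exceeds $\rho_{i}(r)$ (or $|s-t_{0}|\ge r$), on each such region Corollary \ref{lem 02.17.22:32} provides the one-dimensional marginal control
\begin{equation*}
\int_{\mathbb{R}^{d-d_{i}}}\!\bigl|D_{x_{i}}\phi_{i}(\Delta_{x_{i}})q_{\alpha,1}(t,\vec{x})\bigr|\,d\hat{x}_{i}\lesssim G_{i}(t,x_{i}),
\end{equation*}
with an integrable tail in $(t,x_{i})$ after summing the geometric series arising from the scaling \eqref{eqn 07.15.14:49}. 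Summing over $i=1,\dots,\ell$ yields the required mean-oscillation estimate with bound $C\|f\|_{L_{\infty}}$, uniformly in the cylinder.

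The main obstacle, as flagged in the introduction, is exactly this last step: unlike the parabolic case of \cite{CKP23} where $q$ factorises coordinate-wise and one may bound the oscillation by a product of one-dimensional quantities, here the time-changed density $q_{\alpha,1}$ is genuinely non-separable. The workaround is the coordinate-wise marginal bound from Corollary \ref{lem 02.17.22:32}, which trades the missing product structure for an $\ell$-term sum that still has integrable tails in each chosen coordinate. Once the $BMO$-$L_{\infty}$ bound is secured, the passage from $L_{p}(\mathbb{R}\times\mathbb{R}^{d})$ to $L_{q}(\mathbb{R};L_{p}(\mathbb{R}^{d}))$ is routine via vector-valued extrapolation, and the constant depends only on $\alpha,d,c_{0},\delta_{0},\ell,p,q$ as asserted.
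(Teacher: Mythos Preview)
Your proposal is correct and follows essentially the same route as the paper: an $L_{2}$ bound via the Fourier/Plancherel symbol, a $BMO$--$L_{\infty}$ estimate on the anisotropic cylinders using the marginal bounds of Corollary~\ref{lem 02.17.22:32}, Fefferman--Stein interpolation to reach all $L_{p}$, and then an operator-valued Calder\'on--Zygmund argument in $t$ (the paper uses Krylov's version and verifies the H\"ormander condition via Lemma~\ref{lem 06.08.14:52}) to obtain the mixed norms. The paper streamlines your kernel discussion by first making the identification $\vec{\phi}\cdot\Delta_{\vec{d}}\,q_{\alpha,1}=q_{\alpha,\alpha+1}$ (so all estimates come directly from Section~\ref{sec 07.31.15:14}), takes $\kappa_{i}(b)=(\phi_{i}^{-1}(b^{-\alpha}))^{-1/2}$ (your $\rho_{i}(r)=(\phi_{i}^{-1}(r^{-1}))^{-1/2}$ is inconsistent with the stated scaling $t^{\alpha}\phi_{i}(\rho_{i}^{-2})\simeq 1$), and organizes the oscillation bound via a four-piece decomposition of $f$ separating time-near/far and space-near/far, which handles the time-near space-far region more cleanly than a bare two-piece split.
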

The proof of Theorem \ref{thm 07.22.11:31} will be given at the end of this section.
To establish Theorem \ref{thm 07.22.11:31}, we first derive a key estimate for the operator $\vec{\phi}\cdot\Delta_{\vec{d}}\,\mathcal{G}_0$.
The following lemma provides an integral representation of $\vec{\phi}\cdot\Delta_{\vec{d}}\,\mathcal{G}_0$ and its boundedness in $L_2(\mathbb{R}^{d+1})$.
\begin{lemma}
Let $\mathcal{G}$ be defined by
\begin{align*}
\mathcal{G}f(t,\vec{x}):=\vec{\phi}\cdot\Delta_{\vec{d}}\left[ \mathcal{G}_{0}f(t,\cdot) \right](\vec{x}).
\end{align*}
Then for $f\in C^{\infty}_{c}(\bR^{d+1}_{+})$,
\begin{align}\label{eqn 04.28.10:33}
\mathcal{G}f(t,\vec{x}) = \int_{0}^{t} \int_{\bR^{d}} q_{\alpha,\alpha+1}(t-s, \vec{x}-\vec{y}) f(s,\vec{y}) \mathrm{d}\vec{y} \mathrm{d}s,
\end{align}
and
\begin{align}\label{eqn 06.24.15:27}
\|\mathcal{G}f\|_{L_{2}(\bR^{d+1})} \leq C \|f\|_{L_{2}(\bR^{d+1})}.
\end{align}
Here $q_{\alpha,\alpha+1}$ is the function defined in \eqref{eqn 06.24.15:02}.
\end{lemma}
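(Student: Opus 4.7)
The plan is to derive the representation \eqref{eqn 04.28.10:33} by combining Theorem \ref{lem 06.24.15:35} with a direct computation of $\partial_{t}^{\alpha}\mathcal{G}_{0}f$, and then to obtain \eqref{eqn 06.24.15:27} by taking the joint Fourier transform in $(t,\vec{x})$, explicitly identifying the resulting multiplier, and verifying that it is uniformly bounded by~$1$.

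For \eqref{eqn 04.28.10:33}, I would start from the fact (Theorem \ref{lem 06.24.15:35}) that $\mathcal{G}_{0}f$ is a strong solution of \eqref{25.03.14.10.36}, so $\mathcal{G}f=\vec{\phi}\cdot\Delta_{\vec{d}}\,\mathcal{G}_{0}f=\partial_{t}^{\alpha}\mathcal{G}_{0}f-f$; since $\mathcal{G}_{0}f(0,\vec{x})=0$, Caputo coincides with Riemann--Liouville and $\partial_{t}^{\alpha}\mathcal{G}_{0}f=\partial_{t}I_{t}^{1-\alpha}\mathcal{G}_{0}f$. The key fractional-calculus identities are $I_{t}^{1-\alpha}q_{\alpha,1}(\cdot,\vec{x})=q(\cdot,\vec{x})$ (which follows from $q_{\alpha,1}=D_{t}^{1-\alpha}q$ together with the composition rule $I_{t}^{1-\alpha}D_{t}^{1-\alpha}=\mathrm{Id}$, whose defect term $I_{t}^{\alpha}q(0^{+},\vec{x})$ vanishes for $\vec{x}\neq 0$) and $\partial_{t}q=q_{\alpha,\alpha+1}$ (since $\varphi_{\alpha,\alpha+1}=\partial_{t}\varphi$ by definition and one may differentiate \eqref{eqn 12.31.17:50}). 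Applying Fubini converts $I_{t}^{1-\alpha}\mathcal{G}_{0}f$ into the convolution of $q$ with $f$, and differentiating in $t$ splits into the boundary contribution $\int q(0^{+},\vec{x}-\vec{y})f(t,\vec{y})\,\mathrm{d}\vec{y}=f(t,\vec{x})$ (by the delta-family property of the heat kernel $q$) and $\int_{0}^{t}\int q_{\alpha,\alpha+1}(t-s,\vec{x}-\vec{y})f(s,\vec{y})\,\mathrm{d}\vec{y}\,\mathrm{d}s$; the former cancels the $-f$ in $\mathcal{G}f=\partial_{t}^{\alpha}\mathcal{G}_{0}f-f$, leaving exactly \eqref{eqn 04.28.10:33}.

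For \eqref{eqn 06.24.15:27}, I would extend $\mathcal{G}f$ and $f$ by zero for $t\leq 0$ and pass to the joint Fourier transform in $(t,\vec{x})$. The identity \eqref{eqn 04.28.10:33} presents $\mathcal{G}f$ as a space-time convolution, hence its Fourier transform equals $\tilde{m}(\tau,\vec{\xi})\,\tilde{f}(\tau,\vec{\xi})$ with
\begin{equation*}
\tilde{m}(\tau,\vec{\xi})=\int_{0}^{\infty}\mathrm{e}^{-i\tau t}\,\cF_{d}[q_{\alpha,\alpha+1}(t,\cdot)](\vec{\xi})\,\mathrm{d}t.
\end{equation*}
Using $\cF_{d}[q(t,\cdot)](\vec{\xi})=E_{\alpha}(-t^{\alpha}m_{\vec{\phi}}(\vec{\xi}))$ (obtained by inserting the Laplace transform $\int_{0}^{\infty}\mathrm{e}^{-\lambda t}\varphi(t,r)\,\mathrm{d}t=\lambda^{\alpha-1}\mathrm{e}^{-r\lambda^{\alpha}}$ into \eqref{eqn 12.31.17:50}), one has $\cF_{d}[q_{\alpha,\alpha+1}(t,\cdot)](\vec{\xi})=\partial_{t}E_{\alpha}(-t^{\alpha}m_{\vec{\phi}}(\vec{\xi}))$. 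Integrating by parts in $t$ (using $E_{\alpha}(0)=1$ and $E_{\alpha}(-t^{\alpha}m_{\vec{\phi}})\to 0$ as $t\to\infty$) and applying the Laplace identity $\int_{0}^{\infty}\mathrm{e}^{-\lambda t}E_{\alpha}(-t^{\alpha}c)\,\mathrm{d}t=\lambda^{\alpha-1}/(\lambda^{\alpha}+c)$ with $\lambda=i\tau$ yield
\begin{equation*}
\tilde{m}(\tau,\vec{\xi})=\frac{-m_{\vec{\phi}}(\vec{\xi})}{(i\tau)^{\alpha}+m_{\vec{\phi}}(\vec{\xi})}.
\end{equation*}
Since $\alpha\in(0,1)$, the principal branch gives $\Re[(i\tau)^{\alpha}]=|\tau|^{\alpha}\cos(\alpha\pi/2)\geq 0$, so $|(i\tau)^{\alpha}+m_{\vec{\phi}}(\vec{\xi})|\geq m_{\vec{\phi}}(\vec{\xi})$ and $|\tilde{m}(\tau,\vec{\xi})|\leq 1$ uniformly. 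Plancherel's theorem then delivers \eqref{eqn 06.24.15:27}.

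The principal technical obstacle will be rigorously justifying the fractional-calculus manipulations near $t=0$ and $\vec{x}=0$, because $q$ is a genuine heat kernel, singular in both variables. I expect to handle this by running the argument pointwise for $\vec{x}\neq 0$, invoking the spatial integrability bounds of Corollary \ref{lem 02.17.22:32}-(iii) to exchange orders of integration and apply dominated convergence, and then testing against $f\in C_{c}^{\infty}$. An equivalent route is to verify the identity $\vec{\phi}\cdot\Delta_{\vec{d}}\,q_{\alpha,1}=q_{\alpha,\alpha+1}$ purely on the Fourier side, where it reduces to the algebraic identity $-m_{\vec{\phi}}\cdot t^{\alpha-1}E_{\alpha,\alpha}(-t^{\alpha}m_{\vec{\phi}})=\partial_{t}E_{\alpha}(-t^{\alpha}m_{\vec{\phi}})$ via termwise differentiation of the Mittag-Leffler series. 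Once \eqref{eqn 04.28.10:33} is secured, the $L_{2}$ bound is an immediate Plancherel application.
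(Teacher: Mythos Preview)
Your proposal is correct, but for \eqref{eqn 04.28.10:33} you have the priority inverted: the paper takes precisely what you call the ``equivalent route'' and nothing more. It fixes $0<s<t$, applies the spatial Fourier transform $\cF_{d}$ to the integrand $\vec{\phi}\cdot\Delta_{\vec{d}}\,q_{\alpha,1}(t-s,\cdot)\ast f(s,\cdot)$, invokes the formula $\cF_{d}[q_{\alpha,\beta}(t,\cdot)](\vec{\xi})=t^{\alpha-\beta}E_{\alpha,1-\beta+\alpha}(-t^{\alpha}m_{\vec{\phi}}(\vec{\xi}))$, and reads off $-m_{\vec{\phi}}\cdot t^{\alpha-1}E_{\alpha,\alpha}(-t^{\alpha}m_{\vec{\phi}})=t^{-1}E_{\alpha,0}(-t^{\alpha}m_{\vec{\phi}})$ from the Mittag--Leffler series (the $k=0$ term in $E_{\alpha,0}$ vanishes because $1/\Gamma(0)=0$). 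This is a two-line argument with no fractional-calculus bookkeeping. Your primary approach via $\partial_{t}^{\alpha}\mathcal{G}_{0}f-f$, the identity $I_{t}^{1-\alpha}q_{\alpha,1}=q$, and the delta-family cancellation at $t=s$ would also work, but it requires justifying several limits (the defect term, the boundary contribution $q(0^{+},\cdot)\ast f(t,\cdot)=f(t,\cdot)$, dominated convergence near the singularity) that the Fourier-side route bypasses entirely. What your approach buys is a more ``physical'' derivation that explains \emph{why} the kernel $q_{\alpha,\alpha+1}$ appears; the paper's buys brevity.

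For \eqref{eqn 06.24.15:27} the paper simply cites \cite[Lemma 4.2]{KPR21nonlocal}, and your explicit Plancherel computation of the multiplier $\tilde{m}(\tau,\vec{\xi})=-m_{\vec{\phi}}(\vec{\xi})/((i\tau)^{\alpha}+m_{\vec{\phi}}(\vec{\xi}))$ with the bound $|\tilde{m}|\leq 1$ via $\Re[(i\tau)^{\alpha}]\geq 0$ is exactly the content of that reference.
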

\begin{proof}
Let $f \in C^{\infty}_{c}(\bR^{d+1}_{+})$, and for $0<s<t$, define
\begin{gather*}
G_{0}f(t-s,\vec{x}) = \int_{\bR^{d}}  \vec{\phi} \cdot \Delta_{\vec{d}} \,\, q_{\alpha,1}(t-s,\vec{x}-\vec{y}) f(s,\vec{y}) \mathrm{d}\vec{y},
\\
Gf(t-s,\vec{x}) = \int_{\bR^{d}} q_{\alpha,\alpha+1}(t-s, \vec{x}-\vec{y}) f(s,\vec{y}) \mathrm{d}\vec{y}.
\end{gather*}
Since $f\in C^{\infty}_{c}(\bR^{d+1}_{+})$, both integrals above are well-defined and continuous in $\vec{x}$. From \cite[Lemma 3.7 (iv)]{KPR21nonlocal}, we have
\begin{equation}\label{eqn 06.01.16:14}
\cF_{d}[q_{\alpha,\beta}(t,\cdot)](\xi) = t^{\alpha-\beta}E_{\alpha,1-\beta+\alpha}\left( -t^{\alpha}\sum_{i=1}^{\ell} \phi_{i}(|\xi_{i}|^{2}) \right),
\end{equation}
where $E_{a,b}(z)$ is the two-parameter Mittag-Leffler function defined as
$$
E_{a,b}(z) = \sum_{k=1}^{\infty}\frac{z^{k}}{\Gamma(ak+b)} \quad z\in \mathbb{C}.
$$
Applying \eqref{eqn 06.01.16:14} together with \eqref{eqn 06.24.16:34}, we obtain
\begin{align*}
\mathcal{F}_{d}\left[ G_{0}f(t-s,\cdot) \right] (\vec{\xi}) &= \mathcal{F}_{d}[q_{\alpha,1}(t-s,\cdot)](\vec{\xi}) \left( \sum_{i=1}^{\ell} -\phi_{i}(|\xi_{i}|^{2}) \right)\mathcal{F}_{d}[f(s,\cdot)](\vec{\xi})
\\
& = \mathcal{F}_{d}[q_{\alpha,\alpha+1}(t-s,\cdot)](\vec{\xi}) \mathcal{F}_{d}[f(s,\cdot)](\vec{\xi})
\\
&= \mathcal{F}_{d}[Gf(t-s,\cdot)](\vec{\xi}).
\end{align*}
Therefore, we have $G_{0}f(t-s,\vec{x}) = Gf(t-s,\vec{x})$ for all $0<s<t$ and $\vec{x} \in \bR^{d}$.
This establishes \eqref{eqn 04.28.10:33}.
For the estimate \eqref{eqn 06.24.15:27}, we follow the proof of \cite[Lemma 4.2]{KPR21nonlocal}.
This completes the proof.
\end{proof}

The next key step in proving Theorem \ref{thm 07.22.11:31} is to establish mean oscillation estimates for $\mathcal{G}f$.
To describe these estimates, we first introduce some notions related to BMO spaces.
For measurable subsets $E\subset \R^{d+1}$ with finite measure and locally integrable functions $h$, we define the average of $h$ over $E$ as
\begin{equation*}
h_E:=\aint_{E}h(s,\vec{y})\mathrm{d}\vec{y}\mathrm{d}s:=\aint_{E}h(s,y_{1},\dots,y_{\ell})\mathrm{d}y_{1}\cdots\mathrm{d}y_{\ell}\mathrm{d}s:=\frac{1}{|E|}\int_{E}h(s,y_{1},\dots,y_{\ell})\mathrm{d}y_{1}\cdots\mathrm{d}y_{\ell}\mathrm{d}s,
\end{equation*}
where $|E|$ is the $(d+1)$-dimensional Lebesgue measure of $E$. 
To specify the class of measurable sets under consideration, we introduce the following notations:
\begin{equation*}
\kappa_{i}(b):=\left(\phi^{-1}_{i}(b^{-\alpha})\right)^{-1/2}, \quad b>0,
\end{equation*}
\begin{equation*}\label{eqn 12.26.17:26}
Q_b(t, \vec{x}):=(t-b,t+b)\times  \prod_{i=1}^{\ell}B^{i}_{\kappa_{i}(b)}(x_{i}):= (t-b,t+b)\times \mathbb{B}_{\kappa(b)}(\vec{x}),
\end{equation*}
and
\begin{equation*}
B^{i}_{\kappa_{i}(b)}=B^{i}_{\kappa_{i}(b)}(0), \quad \mathbb{B}_{\kappa(b)} = \prod_{i=1}^{\ell}B^{i}_{\kappa_{i}(b)},\quad Q_b=Q_b(0,\vec{0})= (-b,b)\times \mathbb{B}_{\kappa(b)}.
\end{equation*}
From \eqref{eqn 07.15.14:49}, we have
\begin{equation}\label{eqn 07.15.19:01}
|Q_{\lambda b}(t,x)| \leq \lambda c^{-\frac{d}{\delta_{0}}}_{0}\lambda^{\frac{\alpha d}{2\delta_{0}}} |Q_{b}(t,x)| \quad \forall\, \lambda >1.
\end{equation}
For a locally integrable function $h$ on $\bR^{d+1}$, we define the BMO semi-norm of $h$ on $\bR^{d+1}$ as
\begin{equation*}
\|h\|_{BMO(\bR^{d+1})}:=\|h^{\#}\|_{L_{\infty}(\bR^{d+1})}
\end{equation*}
where
\begin{align*}
h^{\#}(t,\vec{x}):=\sup_{(t,\vec{x})\in Q_b(r,\vec{z})} \aint_{Q_b(r,\vec{z})}|h(s,\vec{y})-h_{Q_b(r,\vec{z})}| \mathrm{d}y \mathrm{d}s.
\end{align*}
We now state the following theorem, which establishes the mean oscillation estimates for $\mathcal{G}f$.
\begin{theorem}
\label{23.02.22.17.33}
For any $f\in L_2(\bR^{d+1})\cap L_\infty(\bR^{d+1})$,
\begin{align} \label{bmoestimate}
\|\cG f\|_{BMO(\bR^{d+1})} \leq C(\alpha,d,c_{0}, \delta_{0},\ell) \|f\|_{L_\infty(\bR^{d+1})}.
\end{align}
\end{theorem}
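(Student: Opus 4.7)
The plan is to apply the classical Calder\'on--Zygmund splitting: for a fixed cylinder $Q_b(r,\vec{z})$, which by translation invariance of the kernel we reduce to $Q_b = Q_b(0,\vec{0})$, write $f = f_1 + f_2$ with $f_1 := f\,\mathbf{1}_{Q_{\lambda b}}$ for a fixed $\lambda > 1$. Since the mean oscillation is invariant under additive constants, with the choice $c := \mathcal{G}f_2(0,\vec{0})$ it suffices to bound the local term $\aint_{Q_b}|\mathcal{G}f_1|$ and the pointwise near-constancy $\sup_{(t,\vec{x})\in Q_b}|\mathcal{G}f_2(t,\vec{x}) - c|$, each by a constant multiple of $\|f\|_{L_\infty}$.

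For the local term, Cauchy--Schwarz, the $L_2$-boundedness \eqref{eqn 06.24.15:27}, and the volume comparison \eqref{eqn 07.15.19:01} give
\[
\aint_{Q_b}|\mathcal{G}f_1|\,\mathrm{d}t\,\mathrm{d}\vec{x} \leq |Q_b|^{-1/2}\|\mathcal{G}f_1\|_{L_2(\bR^{d+1})} \leq C|Q_b|^{-1/2}\|f_1\|_{L_2} \leq C\bigl(|Q_{\lambda b}|/|Q_b|\bigr)^{1/2}\|f\|_{L_\infty} \leq C\|f\|_{L_\infty}.
\]
The non-local term reduces, via the representation \eqref{eqn 04.28.10:33}, to proving the uniform kernel-difference bound
\[
\cI := \sup_{(t,\vec{x})\in Q_b}\int_{Q_{\lambda b}^c}\bigl|q_{\alpha,\alpha+1}(t-s,\vec{x}-\vec{y})\mathbf{1}_{s<t} - q_{\alpha,\alpha+1}(-s,-\vec{y})\mathbf{1}_{s<0}\bigr|\,\mathrm{d}s\,\mathrm{d}\vec{y} \leq C,
\]
since then $|\mathcal{G}f_2(t,\vec{x}) - c| \leq \|f\|_{L_\infty}\,\cI$ pointwise on $Q_b$.

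To estimate $\cI$, I will decompose the complement as $Q_{\lambda b}^c \subseteq \{|s|>\lambda b\}\cup\bigcup_{i=1}^\ell\{|y_i|>\kappa_i(\lambda b)\}$ and handle the pieces separately. On each piece I apply the fundamental theorem of calculus in whichever of the arguments the two kernels disagree: a time integration of $|\partial_t q_{\alpha,\alpha+1}|$ on the time-exterior part, and an $x_i$-integration of $|D_{x_i}q_{\alpha,\alpha+1}|$ on each spatial exterior. The choice $\kappa_i(\lambda b) = (\phi_i^{-1}((\lambda b)^{-\alpha}))^{-1/2}$ is precisely the one that places $(t-s,\vec{x}-\vec{y})$ in the off-diagonal regime $|t-s|^\alpha \phi_i(|x_i-y_i|^{-2}) \lesssim 1$ of Corollary \ref{lem 02.17.22:32}(i), so that the $(d-d_i)$ transverse variables can be integrated out to yield an estimate in terms of $|x_i-y_i|$ and $\phi_i(|x_i-y_i|^{-2})$ alone; for the time-exterior piece, Lemma \ref{lem 06.08.14:52} supplies the corresponding $L^1$-type control. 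Combining these with the weak lower scaling \eqref{eqn 07.15.14:49} reduces each piece to a single-variable integral that converges and is bounded by a constant depending only on $\alpha,d,c_0,\delta_0,\ell,\lambda$, independent of $b$.

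The main obstacle is precisely this last step. Unlike the parabolic case \eqref{25.04.07.15.22}, the kernel $q_{\alpha,\alpha+1}$ does not factor across the coordinate blocks, so the integral of $|D_{x_i}q_{\alpha,\alpha+1}|$ in the transverse $(d-d_i)$ variables cannot be reduced by Fubini to a product of one-dimensional integrals; this is exactly the point highlighted in Remark \ref{rmk 12.23.17:06}(ii). The role of Corollary \ref{lem 02.17.22:32} is to supply the correct marginal bound, with the right off-diagonal behavior in $|x_i|$ and $\phi_i(|x_i|^{-2})$, that replaces this failed factorization. Once this marginal bound is invoked, the resulting one-variable integral is easily controlled by the scaling \eqref{eqn 07.15.14:49}, and assembling the bounds over all $\ell+1$ pieces of the decomposition closes the estimate for $\cI$ and hence proves \eqref{bmoestimate}.
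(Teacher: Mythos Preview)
Your local/nonlocal Calder\'on--Zygmund split and the use of the $L_2$-bound \eqref{eqn 06.24.15:27} for the local piece are exactly right, and this part matches the paper. The gap is in the nonlocal piece. The kernel difference
\[
q_{\alpha,\alpha+1}(t-s,\vec{x}-\vec{y})\mathbf{1}_{s<t} - q_{\alpha,\alpha+1}(-s,-\vec{y})\mathbf{1}_{s<0}
\]
involves \emph{simultaneous} shifts in time and in all $\ell$ spatial blocks, not ``whichever argument the two kernels disagree''. On the time-exterior $\{s<-\lambda b\}$ you propose only the time FTC; this controls the time shift via Lemma~\ref{lem 06.08.14:52}, but leaves the spatial remainder
\[
\int_{s>\lambda b}\int_{\bR^d}\bigl|q_{\alpha,\alpha+1}(s,\vec{x}-\vec{y})-q_{\alpha,\alpha+1}(s,-\vec{y})\bigr|\,\mathrm{d}\vec{y}\,\mathrm{d}s
\]
with integration over \emph{all} of $\bR^d$. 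This region contains the time-far, space-near set $\{s>\lambda b\}\times\mathbb{B}_{\kappa(b)}$, for which your off-diagonal hypothesis fails and Corollary~\ref{lem 02.17.22:32}(i) does not apply; a spatial FTC there produces $\int_{\bR^d}|\nabla_{x_i}q_{\alpha,\alpha+1}(s,\cdot)|$, whose control via the available bounds is delicate for small $\delta_0$ and $\ell\geq 2$. You never invoke part~(ii) of Corollary~\ref{lem 02.17.22:32}, which is precisely the near-diagonal tool needed here.

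The paper (Lemma~\ref{outwholeestimate}) avoids this by comparing $\cG f$ at two points $(t,\vec{x}),(s,\vec{y})\in Q_b$ and telescoping so that each step changes only one group of variables: a four-piece decomposition $f=f_1+f_2$, $f_2=f_3+f_4$ with smooth cutoffs isolates (a)~the time-local piece (your $L_2$ argument), (b)~the pure time oscillation of $\cG f_2$ (time FTC $+$ Lemma~\ref{lem 06.08.14:52}), (c)~the pure space oscillation of $\cG f_3$ on the space-far support (space FTC $+$ Corollary~\ref{lem 02.17.22:32}(i) with $|y_i|>\kappa_i(b)$ guaranteed), and (d)~the time-far, space-near piece $f_4$, handled by a direct pointwise bound using the near-diagonal estimates of Corollary~\ref{lem 02.17.22:32}(ii). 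Your plan, as written, folds (d) into the time-exterior without the matching estimate; once you add that split, you recover the paper's argument.
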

\begin{proof}
Let $(t_0,\vec{x}_0)\in\bR^{d+1}$. Then due to the definition of BMO semi-norm, it suffices to prove
\begin{align*}
\int_{Q_{b}(t_{0},\vec{x_{0}})} \left|  \mathcal{G}f(t,\vec{x}) - (\mathcal{G}f)_{Q_{b}(t_{0},\vec{x}_{0})} \right| \mathrm{d}\vec{x} \mathrm{d}t \leq C \|f\|_{L_{\infty}(\bR^{d+1})}, \quad \forall\, b>0,
\end{align*}
where $C$ is independent of $b$ and $(t_0,\vec{x}_0)$.
Applying the change of variable formula, we observe that
\begin{eqnarray*}
\aint_{Q_b(t_0,\vec{x}_0)} |\cG f(t,\vec{x})-(\cG f)_{Q_b(t_0,\vec{x}_0)}| \mathrm{d}\vec{x} \mathrm{d}t 
= \aint_{Q_b} |\cG \tilde{f}(t,\vec{x})-(\cG \tilde{f})_{Q_b}| \mathrm{d}\vec{x} \mathrm{d}t ,
\end{eqnarray*}
where $\tilde{f}(t,\vec{x}):=f(t+t_0, \vec{x}+\vec{x}_0)$. 
Since the $L_{\infty}(\bR^{d+1})$-norm is invariant under translation, the problem reduces to proving
\begin{equation} \label{meanaverageineq}
\aint_{Q_b} |\cG f(t,\vec{x})-(\cG f)_{Q_b}|  \mathrm{d}\vec{x} \mathrm{d}t \leq C \|f\|_{L_\infty(\bR^{d+1})}, \quad  \forall \, b>0.
\end{equation}
The proof of \eqref{meanaverageineq} for $f\in C^{\infty}_{c}(\bR^{d+1})$ will be presented in Lemma \ref{outwholeestimate}. 
For general case, choose a sequence of functions $f_n\in C_c^\infty(\bR^{d+1})$ such that $\cG f_n \to \cG f \ (a.e.)$, and $\|f_n\|_{L_\infty(\bR^{d+1})}\leq \|f\|_{L_\infty(\bR^{d+1})}$. Then by Fatou's lemma, and Lemma \ref{outwholeestimate}, we obtain
\begin{align*}
\aint_{Q_b} |\cG f(t,\vec{x})-(\cG f)_{Q_b}| \mathrm{d}t \mathrm{d}\vec{x}& \leq \aint_{Q_b} \aint_{Q_b} |\cG f(t,\vec{x})-\cG f(s,\vec{y})| \mathrm{d}t \mathrm{d}\vec{x} \mathrm{d}s \mathrm{d}\vec{y}
\\
& \leq  \liminf_{n\to \infty} \aint_{Q_b} \aint_{Q_b} |\cG f_n(t,\vec{x})-\cG f_n(s,\vec{y})| \mathrm{d}t \mathrm{d}\vec{x} \mathrm{d}s \mathrm{d}\vec{y}
\\
& \leq C  \liminf_{n\to \infty} \|f_n\|_{L_\infty(\bR^{d+1})}
\leq C \|f\|_{L_\infty(\bR^{d+1})}.
\end{align*}
This completes the proof.
\end{proof}

The final step in this section is to establish \eqref{meanaverageineq} for $f\in C_c^{\infty}(\mathbb{R}^{d+1})$.
To achieve this, it suffices to show that
\begin{align*}
\aint_{Q_b}\aint_{Q_b}|\cG f (t,\vec{x})-\cG f(s,\vec{y})| \mathrm{d}\vec{y} \mathrm{d}s  \mathrm{d}\vec{x} \mathrm{d}t    \leq C \|f\|_{L_\infty(\bR^{d+1})}.
\end{align*}
Once this is established, \eqref{meanaverageineq} follows immediately from the inequality
$$
\aint_{Q_b} |\cG f(t,\vec{x})-(\cG f)_{Q_b}|  \mathrm{d}\vec{x} \mathrm{d}t\leq \aint_{Q_b}\aint_{Q_b}|\cG f (t,\vec{x})-\cG f(s,\vec{y})| \mathrm{d}\vec{y} \mathrm{d}s\mathrm{d}\vec{x} \mathrm{d}t .
$$
We now state the key lemma that completes the proof of \eqref{meanaverageineq}.
\begin{lemma}
\label{outwholeestimate}
Let $f\in C_c^\infty(\bR^{d+1})$ and $b>0$. Then we have
\begin{align*}
\aint_{Q_b}\aint_{Q_b}|\cG f (t,\vec{x})-\cG f(s,\vec{y})| \mathrm{d}\vec{y} \mathrm{d}s  \mathrm{d}\vec{x} \mathrm{d}t    \leq C \|f\|_{L_\infty(\bR^{d+1})},
\end{align*}
where $C$ depends only on $\alpha,d, c_{0},\delta_{0},\ell$.
\end{lemma}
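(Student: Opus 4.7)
The plan is to use a standard mean oscillation decomposition, separating a local contribution controlled by the $L_2$-boundedness \eqref{eqn 06.24.15:27} and a far-field contribution controlled by the anisotropic kernel bounds of Section \ref{sec 07.31.15:14}. Fix a dilation factor $\lambda>1$ (for concreteness, $\lambda=4$) and split $f=f_1+f_2$ with $f_1:=f\,\mathbf{1}_{Q_{\lambda b}}$ and $f_2:=f-f_1$. By the triangle inequality, the quantity to be estimated is bounded by $2\aint_{Q_b}|\mathcal{G}f_1|\,\mathrm{d}\vec{x}\mathrm{d}t$ plus $\aint_{Q_b}\aint_{Q_b}|\mathcal{G}f_2(t,\vec{x})-\mathcal{G}f_2(s,\vec{y})|\,\mathrm{d}\vec{y}\mathrm{d}s\mathrm{d}\vec{x}\mathrm{d}t$. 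For the local piece, Cauchy--Schwarz combined with the $L_2$-bound \eqref{eqn 06.24.15:27} and the doubling-type inequality \eqref{eqn 07.15.19:01} immediately give
\[
\aint_{Q_b}|\mathcal{G}f_1|\,\mathrm{d}\vec{x}\mathrm{d}t \leq |Q_b|^{-1/2}\|\mathcal{G}f_1\|_{L_2(\bR^{d+1})} \leq C\bigl(|Q_{\lambda b}|/|Q_b|\bigr)^{1/2}\|f\|_{L_\infty} \leq C\|f\|_{L_\infty}.
\]

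For the far-field piece, since $f_2\equiv 0$ on $Q_{\lambda b}$ and $\mathcal{G}$ is a convolution against $q_{\alpha,\alpha+1}$ (extended by $0$ for negative times, as in \eqref{eqn 04.28.10:33}),
\[
|\mathcal{G}f_2(t,\vec{x})-\mathcal{G}f_2(s,\vec{y})|\leq\|f\|_{L_\infty}\int_{\bR^{d+1}\setminus Q_{\lambda b}}|q_{\alpha,\alpha+1}(t-\tau,\vec{x}-\vec{z})-q_{\alpha,\alpha+1}(s-\tau,\vec{y}-\vec{z})|\,\mathrm{d}\vec{z}\mathrm{d}\tau.
\]
Inserting the intermediate value $q_{\alpha,\alpha+1}(t-\tau,\vec{y}-\vec{z})$ decomposes the integrand into a spatial increment and a temporal increment. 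For the spatial increment I would telescope through the $\ell$ components of $\vec{x}-\vec{y}$ and apply the fundamental theorem of calculus, producing $\sum_{i=1}^{\ell}|x_i-y_i|\sup|\nabla_{x_i}q_{\alpha,\alpha+1}|$ with $|x_i-y_i|\leq 2\kappa_i(b)$. For the temporal increment, the identity $\partial_t q_{\alpha,\alpha+1}=q_{\alpha,\alpha+2}$ (consequence of \eqref{eqn 06.01.16:14} and the recursion $D_tq_{\alpha,\beta}=q_{\alpha,\beta+1}$) yields the bound $2b\sup|q_{\alpha,\alpha+2}|$. This reduces the problem to proving, uniformly in $b>0$ and $(t,\vec{x}),(s,\vec{y})\in Q_b$, that
\[
\sum_{i=1}^{\ell}\int_{\bR^{d+1}\setminus Q_{\lambda b}}\kappa_i(b)|\nabla_{x_i}q_{\alpha,\alpha+1}(t-\tau,\vec{x}-\vec{z})|\,\mathrm{d}\vec{z}\mathrm{d}\tau+\int_{\bR^{d+1}\setminus Q_{\lambda b}}b\,|q_{\alpha,\alpha+2}(t-\tau,\vec{y}-\vec{z})|\,\mathrm{d}\vec{z}\mathrm{d}\tau\leq C.
\]

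Each integral will be split by whether $|\tau|\gtrsim b$ (temporal far field, where $|t-\tau|\simeq|\tau|$) or $|\tau|\lesssim b$ with $\vec{z}\notin\mathbb{B}_{\kappa(\lambda b)}$ (spatial far field). In the temporal far-field region, Lemma \ref{lem 06.08.14:52} disposes of the spatial integration, leaving $\tau$-integrals of the form $\int_{|\tau|\gtrsim b}|\tau|^{\alpha-\beta}\mathrm{d}\tau$ with $\beta\in\{\alpha+1,\alpha+2\}$ that exactly absorb the prefactors $\kappa_i(b)$ and $b$. In the spatial far field, at least one $z_j$ lies in the off-diagonal regime $\tau\phi_j(|z_j|^{-2})\leq 1$; one then decomposes $\bR^d$ according to the permutation structure of Theorem \ref{lem 06.01.17:52}, applies the marginal bound \eqref{eqn 02.17.23:45} of Corollary \ref{lem 02.17.22:32} to integrate out the $d-d_i$ transverse coordinates, and is left with a one-variable power integral in $x_i$ whose finiteness follows from \eqref{eqn 07.15.14:49} and \eqref{int phi}.

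The main obstacle is the combinatorial--analytic bookkeeping in the spatial far field: Theorem \ref{lem 06.01.17:52} forces a decomposition over all permutations of $\{1,\ldots,\ell\}$ and all near-/off-diagonal subset choices, and for each sub-case one must check that the $\kappa_i(b)$ pulled out of $\nabla_{x_i}$ (resp.\ the factor $b$ from $\partial_t$) is exactly cancelled by the sub-dimensional mass bounds of Corollary \ref{lem 02.17.22:32}, so that the resulting estimate is genuinely scale invariant in $b$ and depends on $\lambda$ only through the ratio $\kappa_i(\lambda b)/\kappa_i(b)=O(\lambda^{1/(2\delta_0)})$ provided by \eqref{eqn 07.15.14:49}. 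This is precisely what the somewhat intricate formulations of Lemma \ref{lem 06.08.14:52} and Corollary \ref{lem 02.17.22:32} were designed to handle; once the case-by-case verification is carried out, summing the finitely many contributions yields the desired bound $C\|f\|_{L_\infty(\bR^{d+1})}$.
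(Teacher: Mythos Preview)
Your overall scheme (local piece via $L_2$, far-field piece via kernel increments) is the right one, and the temporal increment $\partial_t q_{\alpha,\alpha+1}=q_{\alpha,\alpha+2}$ combined with Lemma~\ref{lem 06.08.14:52} works exactly as you say. The gap is in the \emph{spatial} increment on the temporal far-field region $|\tau|\gtrsim b$. There you need $\int_{\bR^d}|\nabla_{x_i}q_{\alpha,\alpha+1}(\sigma,\cdot)|\,\mathrm{d}\vec{z}$, and you cite Lemma~\ref{lem 06.08.14:52}; but that lemma bounds only $\int|q_{\alpha,\beta}|$, not $\int|\nabla_{x_i}q_{\alpha,\beta}|$. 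More seriously, your stated conclusion that $\int_{|\tau|\gtrsim b}|\tau|^{\alpha-(\alpha+1)}\,\mathrm{d}\tau$ ``absorbs $\kappa_i(b)$'' is false: $\kappa_i(b)\int_b^{\infty}\tau^{-1}\,\mathrm{d}\tau=\infty$. The correct full-space gradient bound carries an extra factor $(\phi_i^{-1}(\sigma^{-\alpha}))^{1/2}=\kappa_i(\sigma)^{-1}$, and only \emph{that} makes the $\tau$-integral converge via \eqref{eqn 07.15.14:49}; this gradient analogue of Lemma~\ref{lem 06.08.14:52} is nowhere stated, so you would have to prove it first. There is a dual problem for the temporal increment in the spatial far field $|\tau|\lesssim b$: Corollary~\ref{lem 02.17.22:32}(i) gives $\int_{\bR^{d-d_j}}|q_{\alpha,\alpha+2}(\sigma,\cdot)|\lesssim\sigma^{\alpha/2-2}$, which is not integrable at $\sigma=0$, so the prefactor $b$ cannot save you.

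The paper sidesteps both cross terms by telescoping at the level of $f$ rather than the kernel. It first removes the near-in-time part $f_1=f(1-\eta)$ (handled by your $L_2$ argument plus the off-diagonal bound), so that every remaining piece is supported in $(-\infty,-2b)$ and the singular zone $\sigma=t-\tau\approx 0$ never appears. Then it writes $\cG f_2(t,\vec{x})-\cG f_2(s,\vec{y})$ as a time difference $\cG f_2(t,\vec{x})-\cG f_2(s,\vec{x})$ at a \emph{fixed} spatial point (no gradient needed; Lemma~\ref{lem 06.08.14:52} applies directly), followed by a space difference $\cG f_2(s,\vec{x})-\cG f_2(s,\vec{y})$ at a \emph{fixed} time, which is in turn split into a spatially far part $f_3$ (where $\nabla_{x_i}q_{\alpha,\alpha+1}$ is integrated only over $|y_i|\geq\kappa_i(b)$, precisely what Corollary~\ref{lem 02.17.22:32} supplies) and a compactly supported part $f_4$ bounded pointwise. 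This ordering---time first, space second, after the $\sigma\approx 0$ region has been excised---is what makes the available kernel estimates suffice without ever needing the full-space gradient $L^1$ bound.
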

\begin{proof}
Take functions $\eta=\eta(t) \in C^\infty(\bR)$ and $\zeta = \zeta(\vec{x}) \in C_c^\infty(\bR^{d})$ satisfying
\begin{itemize}
    \item $0\leq \eta\leq 1$, $\eta=1$ on $(-\infty, -8b/3)$ and $\eta(t)=0$ for $t\geq -7b/3$.
    \item $0\leq \zeta\leq 1$, $\zeta=1$ on $\mathbb{B}_{7\kappa(b)/3}$ and $\zeta=0$ outside of $\mathbb{B}_{8\kappa(b)/3}$.
\end{itemize}
Then using $\eta$ and $\zeta$, we split the integrand as follows (exploit the linearity of $\mathcal{G}$);
\begin{equation*}
\begin{aligned}
|\mathcal{G} f(t,\vec{x})-\mathcal{G} f(s,\vec{y})|&\leq |\mathcal{G} f_1(t,\vec{x})-\mathcal{G} f_1(s,\vec{y})|+|\mathcal{G} f_2(t,\vec{x})-\mathcal{G} f_2(s,\vec{x})|\\
&\quad+ |\mathcal{G} f_3(s,\vec{x}) - \mathcal{G} f_3(s,\vec{y})| + |\mathcal{G} f_4(s,\vec{x}) - \mathcal{G} f_4(s,\vec{y})| 
\\
&=:G_1(t,s,\vec{x},\vec{y})+G_2(t,s,\vec{x},\vec{y})+G_3(t,s,\vec{x},\vec{y})+G_4(t,s,\vec{x},\vec{y}),
\end{aligned}
\end{equation*}
where
\begin{itemize}
    \item $f_1:=f(1-\eta)$; $f_1$ is supported in $(-3b,\infty)\times\bR^{\vec{d}}$.
    \item $f_2:=f\eta$; $f_2$ is supported in $(-\infty,-2b)\times\bR^{\vec{d}}$.
    \item $f_3:=f\eta(1-\zeta)$; $f_3$ is supported in $(-\infty,-2b)\times (\mathbb{B}_{\kappa(b)})^{c}$.
    \item $f_4:=f\eta\zeta$; $f_4$ is supported in
    $(-\infty,-2b)\times \mathbb{B}_{2\kappa(b)}$.
\end{itemize}
Therefore, it is enough to show
$$
\aint_{Q_b}\aint_{Q_b}(G_1+G_2+G_3+G_4)(t,s,\vec{x},\vec{y})\mathrm{d}t \mathrm{d}\vec{x} \mathrm{d}s \mathrm{d}\vec{y}\leq C\|f\|_{L_{\infty}(\bR^{d+1})}.
$$

\textbf{Step 1.} In Step 1, we prove
\begin{align}\label{eqn 06.25.14:02}
\aint_{Q_b}\aint_{Q_b} G_1(t,s,\vec{x},\vec{y}) \mathrm{d}\vec{x} \mathrm{d}t \vec{y} \mathrm{d}s:=\aint_{Q_b}\aint_{Q_b}|\cG f_1 (t,\vec{x})-\cG f_1 (s,\vec{y})| \mathrm{d}\vec{x} \mathrm{d}t \vec{y} \mathrm{d}s\leq C \|f\|_{L_\infty(\bR^{d+1})}.
\end{align}
Recall that $f_1$ is supported in $(-3b,\infty)\times\bR^{\vec{d}}$. To show \eqref{eqn 06.25.14:02} we prove 
\begin{align}
\label{23.03.10.13.13}
    \aint_{Q_b}|\cG f_1(t,\vec{x})|\mathrm{d}\vec{x} \mathrm{d}t\leq C\|f\|_{L_{\infty}(\bR^{d+1})},
\end{align}
which certainly implies \eqref{eqn 06.25.14:02}. We divide the proof of \eqref{23.03.10.13.13} into two steps.
\\
\textbf{Step 1-1.} The support of $f_1$ is contained in $(-3b,3b)\times \mathbb{B}_{3\kappa(b)}$.
\\
By the assumption and \eqref{eqn 07.15.19:01},
$$
\|f_1\|_{L_2(\bR^{d+1})}\leq C |Q_{b}|^{1/2}\|f\|_{L_{\infty}(\bR^{d+1})}.
$$
Thus,  by H\"older's inequality and \eqref{eqn 06.24.15:27},
\begin{align*}
\aint_{Q_b}|\cG f_1 (t,\vec{x})|\mathrm{d}\vec{x}\mathrm{d}t \leq & \left(\int_{Q_b}|\cG f_1 (t,\vec{x})|^2 \mathrm{d}\vec{x} \mathrm{d}t\right)^{1/2}|Q_b|^{-1/2}
\leq  C \|f\|_{L_{\infty}(\bR^{d+1})}.
\end{align*}
\\
\textbf{Step 1-2.} General case.

Take $\zeta_0=\zeta_0(t)\in C^{\infty}(\bR)$ such that $0\leq \zeta_0\leq 1$, $\zeta_0(t)=1$ for $t\leq 2b$, and $\zeta_0(t)=0$ for $t\geq 5b/2$.
Note that $\cG f_{1}=\cG (f_{1}\zeta_0)$ on $Q_b$ and $|f_{1}\zeta_0|\leq |f_{1}|$. Hence, replacing $f_{1}$ by $f_{1}\zeta_{0}$ in \eqref{23.03.10.13.13}, we may assume that $f_{1}(t,\vec{x})=0$ if $|t|\geq 3b$.

Recall that $\zeta=\zeta(\vec{x}) \in C_c^\infty (\bR^d)$ is the function satisfying that $\zeta=1$ in $\mathbb{B}_{7\kappa(b)/3}$ and $\zeta=0$ outside of $\mathbb{B}_{8\kappa(b)/3}$ and $0\leq\zeta\leq1$.
Set $f_{1,1}=\zeta f_{1}$ and $f_{1,2}=(1-\zeta)f_{1}$. Then $\cG f_{1} = \cG f_{1,1} + \cG f_{1,2}$. Since $\cG f_{1,1}$ can be estimated by Step 1-1, we may further assume that $f_{1}(t,\vec{x})=0$ if $\vec{x}\in \mathbb{B}_{2\kappa(b)}$.  Therefore, for any $\vec{x}\in \mathbb{B}_{\kappa(b)}$,
\begin{align*}
\int_{\bR^{\vec{d}}} \left|q_{\alpha,\alpha+1} (t-s,\vec{x}-\vec{y}) f_{1}(s,\vec{y})\right| \mathrm{d}\vec{y}
&= \int_{(\mathbb{B}_{2\kappa(b)})^{c}} |q_{\alpha,\alpha+1}(t-s,\vec{x}-\vec{y})f_{1}(s,\vec{y})| \mathrm{d}\vec{y} 
\\
&\leq \sum_{i=1}^{\ell} \int_{(B^{i}_{2\kappa_{i}(b)})^{c}\times \bR^{d-d_{i}}} |q_{\alpha,\alpha+1}(t-s,\vec{x}-\vec{y})f_{1}(s,\vec{y})| \mathrm{d}\vec{y}
\\
&\leq  \|f\|_{L_{\infty}(\bR^{d+1})} \sum_{i=1}^{\ell}  \int_{(B^{i}_{\kappa_{i}(b)})^{c}\times \bR^{d-d_{i}}} |q_{\alpha,\alpha+1}(t-s,\vec{y})| \mathrm{d}\vec{y}
\\
&:=  \|f\|_{L_{\infty}(\bR^{d+1})}\sum_{i=1}^{\ell} G_{1,i}.
\end{align*}
By Corollary \ref{lem 02.17.22:32} and \eqref{int phi},
\begin{align*}
G_{1,i}
&\leq C{\bf1}_{|s|\leq 3b} |t-s|^{\frac{\alpha}{2}-1}  \int_{(\phi_{i}^{-1}(b^{-\alpha}))^{-1/2}}^{\infty} \frac{(\phi_{i}(\rho^{-2}))^{1/2}}{\rho^{d_{i}}}\rho^{d_{i}-1} \mathrm{d}\rho
\\
&\leq C {\bf1}_{|s|\leq 3b} |t-s|^{\frac{\alpha}{2}-1}  b^{-\alpha/2}.
\end{align*}
Note that if $|t| \leq b$ and $|s|\leq 3b$ then $|t-s|\leq 4b$.  Hence, it follows that for any $(t,\vec{x})\in Q_{b}$,  
\begin{align*}
|\cG f(t,\vec{x})| & \leq \|f\|_{L_{\infty}(\bR^{d+1})} \int_{-\infty}^{t} \sum_{i=1}^{\ell}  G_{1,i} \mathrm{d}s  \leq C\|f\|_{L_\infty(\bR^{d+1})} b^{-\alpha/2} \int_{|t-s|\leq 4b} |t-s|^{-1+\alpha/2} \mathrm{d}s\leq C \|f\|_{L_{\infty}(\R^{d+1})}.
\end{align*}
By taking the average over $Q_{b}$ on both sides, we have \eqref{23.03.10.13.13}.

\textbf{Step 2.} In Step 2, we prove
\begin{align*}
\aint_{Q_b}\aint_{Q_b} G_2(t,s,\vec{x},\vec{y}) \mathrm{d}\vec{x} \mathrm{d}t \vec{y} \mathrm{d}s:=\aint_{Q_b}\aint_{Q_b}|\cG f_2 (t,\vec{x})-\cG f_2 (s,\vec{x})| \mathrm{d}\vec{x} \mathrm{d}t \vec{y} \mathrm{d}s\leq C \|f\|_{L_\infty(\bR^{d+1})}.
\end{align*}
Recall that $f_2$ is supported in $(-\infty,-2b)\times\bR^{\vec{d}}$. If we show that  
\begin{align}
\label{23.03.10.01.27}
|\cG f_2 (t_1,\vec{x})-\cG f_2(t_2,\vec{x})|\leq C \|f\|_{L_\infty(\bR^{d+1})} \quad \forall\, (t_1,\vec{x}), (t_2,\vec{x})\in Q_b,
\end{align}
then by taking the average over $Q_{b}$ on both sides, we have the desired result. Thus we only prove \eqref{23.03.10.01.27}. Also, due to the symmetry of the left-hand side of \eqref{23.03.10.01.27}, we may assume $t_{1}>t_{2}$. Then, since $f_2(s,\vec{x})=0$ for $s\geq -2b$ and $t_1, t_2\geq -b$, using this and the fundamental theorem of calculus, it follows that
\begin{align*}
&|\cG f_2(t_1,\vec{x})-\cG f_2(t_2,\vec{x})|\\
&=\Big|\int_{-\infty}^{t_1}  \int_{\bR^{\vec{d}}} q_{\alpha,\alpha+1}(t_{1}-s,\vec{y})f(s,\vec{x}-\vec{y}) \mathrm{d}\vec{y} \mathrm{d}s  - \int_{-\infty}^{t_2}  \int_{\bR^{\vec{d}}} q_{\alpha,\alpha+1}(t_{2}-s,\vec{y})f(s,\vec{x}-\vec{y})  \mathrm{d}\vec{y} \mathrm{d}s \Big|
\\
&= \left| \int_{-\infty}^{-2b} \int_{\bR^{d}} \int^{t_{1}}_{t_{2}} q_{\alpha,\alpha+2}(t-s,\vec{x}-\vec{y}) f(s,\vec{y}) \mathrm{d}t \mathrm{d}\vec{y} \mathrm{d}s \right|.
\end{align*}
By Lemma \ref{lem 06.08.14:52}, and Fubini's theorem,
\begin{align*}
|\cG f_2(t_1,\vec{x})-\cG f_2(t_2,\vec{x})|
& \leq C \|f\|_{L_\infty(\bR^{d+1})}  \int_{-\infty}^{-2b} 
 \int_{t_2}^{t_1} (t-s)^{-2}\mathrm{d}t \mathrm{d}s.
\end{align*}
Therefore,  for  $-b\leq t_2<t_1 \leq b$, 
\begin{align*}
|\cG f_2(t_1,\vec{x})-\cG f_2(t_2,\vec{x})|
&\leq C \|f\|_{L_\infty(\bR^{d+1})} \left(\int_{t_2}^{t_1}\int_{-\infty}^{-2b} (t-s)^{-2}\mathrm{d}s \mathrm{d}t \right)
\\
&\leq C \|f\|_{L_\infty(\bR^{d+1})} \left(\int_{t_2}^{t_1} b^{-1} \mathrm{d}t \right)
\leq C \|f\|_{L_\infty(\bR^{d+1})}.
\end{align*}
This certainly proves \eqref{23.03.10.01.27}.

\textbf{Step 3.} In Step 3, we prove
\begin{align*}
\aint_{Q_b}\aint_{Q_b} G_4(t,s,\vec{x},\vec{y}) \mathrm{d}\vec{x} \mathrm{d}t \vec{y} \mathrm{d}s:=\aint_{Q_b}\aint_{Q_b}|\cG f_4 (s,\vec{x})-\cG f_4 (s,\vec{y})| \mathrm{d}\vec{x} \mathrm{d}t \vec{y} \mathrm{d}s\leq C \|f\|_{L_\infty(\bR^{d+1})}.
\end{align*}
Recall that $f_4$ is supported in $(-\infty,-2b)\times \mathbb{B}_{3\kappa(b)}$. Like Step 2, it suffices to show
\begin{align}
    \label{23.03.10.01.56}
|\cG f_4(t,\vec{x})|\leq C\|f\|_{L_{\infty}(\bR^{d+1})} \quad \forall\, (t,\vec{x}) \in Q_{b}.
\end{align}
For $(t,\vec{x})\in Q_b$,
\begin{align*}\label{eqn 01.04.14:27}
|\cG f_4(t,\vec{x})| \leq&\int_{-\infty}^{-2b} \int_{\mathbb{B}_{3\kappa(b)}} |q_{\alpha,\alpha+1} (t-s,\vec{x}-\vec{y}) f(s,\vec{y})|\mathrm{d}\vec{y}\mathrm{d}s  \nonumber
\\
\leq&  \|f\|_{L_{\infty}(\bR^{d+1})} \int_{-\infty}^{-2b} \int_{\mathbb{B}_{3\kappa(b)}} |q_{\alpha,\alpha+1} (t-s,\vec{x}-\vec{y})|\mathrm{d}\vec{y} \mathrm{d}s   \nonumber
\\
\leq&  \|f\|_{L_{\infty}(\bR^{d+1})} \int_{b}^{\infty} \int_{\mathbb{B}_{4\kappa (b)}} |q_{\alpha,\alpha+1} (s,\vec{y})|\mathrm{d}\vec{y}\mathrm{d}s   = \|f\|_{L_{\infty}(\bR^{d+1})} \left( G_{4,1} +G_{4,2} \right),
\end{align*}
where
$$
G_{4,1}=\int^{16b}_{b} \int_{\mathbb{B}_{4\kappa(b)}} |q_{\alpha,\alpha+1} (s,\vec{y})|\mathrm{d}\vec{y} \mathrm{d}s, \quad  
G_{4,2}=\int_{16b}^{\infty} \int_{\mathbb{B}_{4\kappa(b)}} |q_{\alpha,\alpha+1} (s,\vec{y})|\mathrm{d}\vec{y}\mathrm{d}s.
$$
Using Lemma \ref{lem 06.08.14:52}, we have 
\begin{align*}
G_{4,1}\leq C \int^{4b}_{b}  s^{-1}\mathrm{d}s =C.
\end{align*}
For $G_{4,2}$, observe that
$$
G_{4,2} \leq \int_{16b}^{\infty} \int_{B^{1}_{4\kappa_{1}(b)}} \left( \int_{\bR^{d-d_{1}}} \left| q_{\alpha,\alpha+1}(s,\vec{y}) \right| \mathrm{d}y_{2}\dots \mathrm{d}y_{\ell} \right) \mathrm{d}y_{1} \mathrm{d}s.
$$
For $s \geq 16b$, by Fubini's theorem and Corollary \ref{lem 02.17.22:32} (ii),
\begin{align*}
&  \int_{B^{1}_{4\kappa_{1}(b)}} \left( \int_{\bR^{d-d_{1}}} \left| q_{\alpha,\alpha+1}(s,\vec{y}) \right| \mathrm{d}y_{2}\dots \mathrm{d}y_{\ell} \right)  \mathrm{d}y_{1}
\\
& \leq  C \sum_{k=1}^{\ell}  \int_{B^{1}_{4\kappa_{1}(b)}}  s^{-1-\alpha/k} \int_{(\phi_{1}(|y_{1}|^{-2}))^{-1/k}}^{2s^{\alpha/k}} (\phi_{1}^{-1}(r^{-k}))^{d_{1}/2} \mathrm{d}r  \mathrm{d}y_{1}
\\
& \leq C \sum_{k=1}^{\ell} \int_{B^{1}_{4\kappa_{1}(b)}} s^{-1-\alpha/k} \int_{(\phi_{1}(|y_{1}|^{-2}))^{-1/k}}^{(16b)^{\alpha/k}} (\phi_{1}^{-1}(r^{-k}))^{d_{1}/2} \mathrm{d}r  \mathrm{d}y_{1}
\\
&\quad + C \sum_{k=1}^{\ell} \int_{B^{1}_{4\kappa_{1}(b)}} s^{-1-\alpha/k} \int_{(16b)^{\alpha/k}}^{2s^{\alpha/k}} (\phi_{1}^{-1}(r^{-k}))^{d_{1}/2} \mathrm{d}r  \mathrm{d}y_{1}
\\
&\leq C \sum_{k=1}^{\ell} \int_{0}^{(16b)^{\alpha/k}} \int_{|y_{1}| \leq \left( \phi_{1}(r^{-k}) \right)^{-1/2}} \left( \phi^{-1}_{1}(r^{-k}) \right)^{d_{1}/2} s^{-1-\alpha/k} \mathrm{d}y_{1}\mathrm{d}r
\\
& \quad + C \sum_{k=1}^{\ell} \int_{B^{1}_{4\kappa_{1}(b)}} \int_{(16b)^{\alpha/k}}^{2s^{\alpha/k}} s^{-1-\alpha/k} (\phi^{-1}_{1}(r^{-k}))^{d_{1}/2} \mathrm{d}r \mathrm{d}y_{1}
\\
&\leq C \sum_{k=1}^{\ell} b^{\alpha/k} s^{-1-\alpha/k} 
 +  C \sum_{k=1}^{\ell} \int_{B^{1}_{4\kappa_{1}(b)}} \int_{(16b)^{\alpha/k}}^{2s^{\alpha/k}} s^{-1-\alpha/k} (\phi^{-1}_{1}(r^{-k}))^{d_{1}/2} \mathrm{d}r \mathrm{d}y_{1}.
\end{align*}
Since
$\sum_{k=1}^{\ell} b^{\alpha/k} \int_{16b}^{\infty} s^{-1-\alpha/k} \mathrm{d}s = C$ which independent of $b$,
it only remains to consider
\begin{align*}
&\sum_{k=1}^{\ell} \int_{16b}^{\infty}  \int_{B_{4\kappa_{1}(b)}} \int_{(16b)^{\alpha/k}}^{2s^{\alpha/k}} s^{-1-\alpha/k} \left( \phi^{-1}(r^{-k}) \right)^{d_{1}/2} \mathrm{d}r \mathrm{d}y_{1} \mathrm{d}s
\\
& =\sum_{k=1}^{\ell} \int_{B_{4\kappa_{1}(b)}} \int_{(16b)^{\alpha/k}}^{\infty} \int_{(r/2)^{k/\alpha}}^{\infty} s^{-1-\alpha/k} \left( \phi^{-1}(r^{-k}) \right)^{d_{1}/2}  \mathrm{d}s\mathrm{d}r \mathrm{d}y_{1} 
\\
&= C \sum_{k=1}^{\ell} \int_{B_{4\kappa_{1}(b)}} \int_{(16b)^{\alpha/k}}^{\infty} \int_{(r/2)^{k/\alpha}}^{\infty} s^{-1-\alpha/k} \left( \phi^{-1}(r^{-k}) \right)^{d_{1}/2}  \mathrm{d}s\mathrm{d}r \mathrm{d}y_{1} 
\\
&=\sum_{k=1}^{\ell} \int_{B_{4\kappa_{1}(b)}} \int_{(16b)^{\alpha/k}}^{\infty} r^{-1} \left( \phi^{-1}(r^{-k}) \right)^{d_{1}/2}  \mathrm{d}s\mathrm{d}r \mathrm{d}y_{1}.
\end{align*}
Using \eqref{eqn 07.15.14:49}, we check that
\begin{align*}
& \sum_{k=1}^{\ell} \int_{B_{4\kappa_{1}(b)}} \int_{(16b)^{\alpha/k}}^{\infty} r^{-1} \left( \phi^{-1}(r^{-k}) \right)^{d_{1}/2}  \mathrm{d}s\mathrm{d}r \mathrm{d}y_{1}
\\
&\leq C \sum_{k=1}^{\ell} \int_{B^{1}_{4\kappa_{1}(b)}} (16b)^{\alpha d_{1}/2} (\kappa_{1}(16b))^{-d_{1}} \int_{(16b)^{\alpha/k}}^{\infty} r^{-1-\frac{k d_{1}}{2}} \mathrm{d}r \mathrm{d}y_{1}
\\
&\leq C \sum_{k=1}^{\ell} b^{\alpha  d_{1}/2} (\kappa_{1}(b))^{d_{1}} (\kappa_{1}(16b))^{-d_{1}} b^{-\alpha  d_{1}/2}
\leq C.
\end{align*}
Hence, we have \eqref{23.03.10.01.56}.

\textbf{Step 4.}
In Step 4, we prove
\begin{align*}
\aint_{Q_b}\aint_{Q_b} G_3(t,s,\vec{x},\vec{y}) \mathrm{d}\vec{x} \mathrm{d}t \vec{y} \mathrm{d}s:=\aint_{Q_b}\aint_{Q_b}|\cG f_3 (s,\vec{x})-\cG f_3 (s,\vec{y})| \mathrm{d}\vec{x} \mathrm{d}t \vec{y} \mathrm{d}s\leq C \|f\|_{L_\infty(\bR^{d+1})}.
\end{align*}
Recall that $f_3$ is supported in $(-\infty,-2b)\times (\mathbb{B}_{3\kappa(b)})^c$. It suffices to prove
\begin{equation}\label{eqn 06.25.16:56}
|\cG f_3(t,\vec{x})-\cG f_3(t,\vec{z})|\leq C\|f\|_{L_{\infty}(\bR^{d+1})} \quad \forall\,  (t,\vec{x}),(t,\vec{z})\in Q_b.
\end{equation}
Since $f_3(s,\vec{y})=0$ if $s\geq -2b$ or $\vec{y} \in \mathbb{B}_{2\kappa(b)}$, we see that for $t>-b$,  
\begin{align*}
|\cG f_{3}(t,\vec{x})-\cG f_{3}(t,\vec{z})|
= \left |\int_{-\infty}^{-2b}  \int_{(\mathbb{B}_{2\kappa(b)})^{c}} \left( q_{\alpha,\alpha+1}(t-s,\vec{x}-\vec{y})- q_{\alpha,\alpha+1}(t-s,\vec{z}-\vec{y}) \right) f(s,\vec{y}) \mathrm{d}\vec{y} \mathrm{d}s \right|.
\end{align*}
By the fundamental theorem of calculus, we have
\begin{align*}
&\left|\mathcal{G}f_{3}(t,\vec{x}) - \mathcal{G}f_{3} (t,\vec{z})  \right| 
\\
&\leq \|f\|_{L_{\infty}(\bR^{d+1})} \sum_{i=1}^{\ell} \int_{-\infty}^{-2b} \int_{(\mathbb{B}_{2\kappa(b)})^{c}} \int_{0}^{1} \left| (\nabla_{x_{i}} q_{\alpha,\alpha+1})(t-s,\vec{\theta}(\vec{x},\vec{z},\vec{y},u)) \cdot (x_{i}-z_{i}) \right|  \mathrm{d}u \mathrm{d}\vec{y} \mathrm{d}s
\\
&\leq \|f\|_{L_{\infty}(\bR^{d+1})} \sum_{i=1}^{\ell} \int_{-\infty}^{-2b} \int_{(B^{i}_{2\kappa_{i}(b)})^{c}\times \bR^{d-d_{i}}} \int_{0}^{1} \left| (\nabla_{x_{i}} q_{\alpha,\alpha+1})(t-s,\vec{\theta}(\vec{x},\vec{z},\vec{y},u)) \cdot (x_{i}-z_{i}) \right|  \mathrm{d}u \mathrm{d}\vec{y} \mathrm{d}s
\\
&:= \|f\|_{L_{\infty}(\bR^{d+1})} \sum_{i=1}^{\ell} G_{3,i}.
\end{align*}
where $\vec{\theta}(\vec{x},\vec{z},\vec{y},u)=(1-u)\vec{z} + u \vec{x} - \vec{y}$.  
By Fubini's theorem, and change of variables $\vec{\theta}(\vec{x},\vec{z},\vec{y},u) \to \vec{y}$,  we have
\begin{align*}
& \int_{(B^{i}_{2\kappa_{i}(b)})^{c}\times \bR^{d-d_{i}}} \int_{0}^{1} \left| (\nabla_{x_{i}} q_{\alpha,\alpha+1})(t-s,\vec{\theta}(\vec{x},\vec{z},\vec{y},u)) \cdot (x_{i}-z_{i}) \right|  \mathrm{d}u \mathrm{d}\vec{y}
\\
& =  \int_{0}^{1}  \int_{(B^{i}_{2\kappa_{i}(b)})^{c}\times \bR^{d-d_{i}}} \left| (\nabla_{x_{i}} q_{\alpha,\alpha+1})(t-s,\vec{\theta}(x_{i},z_{i},u)-y_{i}) \cdot (x_{i}-z_{i}) \right| \mathrm{d}\vec{y} \mathrm{d}u
\\
& \leq    \int_{(B^{i}_{\kappa_{i}(b)})^{c}\times \bR^{d-d_{i}}} \left| (\nabla_{x_{i}} q_{\alpha,\alpha+1})(t-s,\vec{y}) \cdot (x_{i}-z_{i}) \right| \mathrm{d}\vec{y}.
\end{align*}
Therefore, for each $i=1,\dots,\ell$
\begin{align*}
G_{3,i} &\leq \int_{-\infty}^{-2b}  \int_{(B^{i}_{\kappa_{i}(b)})^{c}\times \bR^{d-d_{i}}} \left| (\nabla_{x_{i}} q_{\alpha,\alpha+1})(t-s,\vec{y}) \cdot (x_{i}-z_{i}) \right| \mathrm{d}\vec{y} \mathrm{d}s  \nonumber
\\
& \leq C \kappa_{i}(b) \|f\|_{L_\infty(\bR^{d+1})}  \int_{-\infty}^{ -2b} \int_{(B^{i}_{\kappa_{i}(b)})^{c}\times \bR^{d-d_{i}}} |\nabla_{x_{i}} q_{\alpha,\alpha+1} (t-s,\vec{y})| \mathrm{d}\vec{y} \mathrm{d}s  \nonumber
\\
& \leq C \kappa_{i}(b)   \int_{b}^{\infty} \int_{(B^{i}_{\kappa_{i}(b)})^{c}\times \bR^{d-d_{i}}} |\nabla_{x_{i}} q_{\alpha,\alpha+1} (s,\vec{y})| \mathrm{d}\vec{y} \mathrm{d}s.
\end{align*}
By Corollary \ref{lem 02.17.22:32},
\begin{align*}
& \int_{b}^{\infty} \int_{(B^{i}_{\kappa_{i}(b)})^{c}\times \bR^{d-d_{i}}} |\nabla_{x_{i}} q_{\alpha,\alpha+1} (s,\vec{y})| \mathrm{d}\vec{y} \mathrm{d}s
\\
& \leq C \int_{b}^\infty \int_{\left(\phi_{i}^{-1}(s^{-\alpha})\right)^{-1/2}}^\infty s^{\frac{\alpha}{2}-1} \frac{(\phi_{i}(\rho^{-2}))^{1/2}}{\rho^2} \mathrm{d}\rho \mathrm{d}s 
\\
&\quad + C  \sum_{k=1}^{\ell} \int_{b}^\infty \int_{\left(\phi_{i}^{-1}(b^{-\alpha})\right)^{-1/2}}^{\left(\phi_{i}^{-1}(s^{-\alpha})\right)^{-1/2}} \int^{2s^{\alpha/k}}_{(\phi_{i}(\rho^{-2}))^{-1/k}} \rho^{d_{i}-1} s^{-1-\frac{\alpha}{k}} (\phi_{i}^{-1}(r^{-k}))^{(d_{i}+1)/2 } \mathrm{d} r \mathrm{d}\rho \mathrm{d}s.
\end{align*}
We now estimate the last two integrals above.  First, by \eqref{int phi},
\begin{align*}
\int_{b}^\infty \int_{\left(\phi_{i}^{-1}(s^{-\alpha})\right)^{-1/2}}^\infty s^{\frac{\alpha}{2}-1} \frac{(\phi_{i}(\rho^{-2}))^{1/2}}{\rho^2} \mathrm{d}\rho \mathrm{d}s 
&\leq \int_{b}^\infty s^{\frac{\alpha}{2}-1} \left(\phi_{i}^{-1}(s^{-\alpha})\right)^{1/2} \int_{\left(\phi_{i}^{-1}(s^{-\alpha})\right)^{-1/2}}^\infty  \frac{(\phi_{i}(\rho^{-2}))^{1/2}}{\rho} \mathrm{d}\rho  \mathrm{d}s \nonumber
\\
&\leq C\int_{b}^\infty \left(\phi_{i}^{-1}(s^{-\alpha})\right)^{1/2} s^{-1} \mathrm{d}s. 
\end{align*}
Second, for each $k=1,\dots \ell$, by Fubini's theorem, it is easy to see that
\begin{align*}
& \int_{0}^{b} \int_{\left(\phi_{i}^{-1}(b^{-\alpha})\right)^{-1/2}}^{\left(\phi_{i}^{-1}(s^{-\alpha})\right)^{-1/2}} \int^{2s^{\alpha/k}}_{(\phi_{i}(\rho^{-2}))^{-1/k}} \rho^{d_{i}-1} s^{-1-\frac{\alpha}{k}} (\phi_{i}^{-1}(r^{-k}))^{(d_{i}+1)/2 } \, \mathrm{d} r \mathrm{d}\rho \mathrm{d}s  \nonumber
\\
&\leq \int_{0}^{b}   \int^{2s^{\alpha/k}}_{b^{\alpha/k}}  \int_{0}^{\left(\phi_{i}^{-1}(r^{-k})\right)^{-1/2}}  \rho^{d_{i}-1} s^{-1-\frac{\alpha}{k}} (\phi_{i}^{-1}(r^{-k}))^{(d_{i}+1)/2 } \, \mathrm{d}\rho \mathrm{d} r  \mathrm{d}s  \nonumber
\\
&\leq C \int_{0}^{b}   \int^{2s^{\alpha/k}}_{b^{\alpha/k}}   s^{-1-\frac{\alpha}{k}} (\phi_{i}^{-1}(r^{-k}))^{1/2 } \, \mathrm{d} r  \mathrm{d}s   \nonumber
\\
&\leq C \int_{b^{\alpha/k}}^{\infty} \int_{(r/2)^{k/\alpha}} s^{-1-\frac{\alpha}{k}} (\phi_{i}^{-1}(r^{-k}))^{1/2 } \, \mathrm{d}s \mathrm{d} r     \nonumber
\\
&\leq C \int_{b^{\alpha/k}}^{\infty}  r^{-1} (\phi_{i}^{-1}(r^{-k}))^{1/2} \, \mathrm{d} r  \mathrm{d}s 
\end{align*}
Using \eqref{eqn 07.15.14:49}, we have
\begin{align*} 
&\int_{b}^\infty \left(\phi_{1}^{-1}(s^{-\alpha})\right)^{1/2} s^{-1} \mathrm{d}s +\sum_{k=1}^{\ell} \int_{b^{\alpha/k}}^{\infty} r^{-1} (\phi_{i}^{-1}(r^{-k}))^{1/2} \mathrm{d}r  \nonumber
\\
& \leq  C  \left(\phi_{i}^{-1}(b^{-\alpha})\right)^{1/2}b^{\alpha/2} \left( \int_{b}^{\infty} s^{-1-\frac{\alpha}{2}} \mathrm{d}s + \sum_{k=1}^{\ell} \int_{b^{\alpha/k}}^{\infty} r^{-1-\frac{k}{2}} \mathrm{d}r \right) \leq C (\kappa_{i}(b))^{-1}. 
\end{align*}
Therefore, we have $G_{3,i} \leq C$ for $i=1, \dots, \ell$, and thus \eqref{eqn 06.25.16:56} follows.
The lemma is proved.
\end{proof}

We conclude this section with the proof of Theorem \ref{thm 07.22.11:31}.
\begin{proof}[Proof of Theorem \ref{thm 07.22.11:31}]
The first part of the proof is based on the Fefferman-Stein theorem (see \textit{e.g.} \cite[Theorem I.3.1., Theorem IV.2.2.]{stein1993harmonic}) and the Marcinkiewicz interpolation theorem (see \textit{e.g.} \cite[Theorem 1.3.2.]{grafakos2014classical}). To use these theorems, we remark that the cubes $Q_b(s,y)$ satisfy the conditions (i)-(iv) in  \cite[Section 1.1]{stein1993harmonic} (recall \eqref{eqn 07.15.19:01}), and that the map $f\mapsto \cG f$ is sublinear.

\textbf{Step 1.} Proof of \eqref{eqn 07.02.14:56} when $p=q$. 
\\
First, assume that $p\geq 2$. Then using \eqref{eqn 06.24.15:27} and then the Fefferman-Stein theorem, for any $f\in L_2(\bR^{d+1})\cap L_\infty(\bR^{d+1})$, we have
\begin{equation*}
\|(\cG f)^{\#}\|_{L_2(\bR^{d+1})}\leq C \|f\|_{L_2(\bR^{d+1})}.
\end{equation*}
Due to \eqref{bmoestimate}, we also have
\begin{equation*}
\|(\cG f)^{\#}\|_{L_\infty(\bR^{d+1})}\leq C \|f\|_{L_\infty(\bR^{d+1})}.
\end{equation*}
Using these estimates and the Marcinkiewicz interpolation theorem, for any $p\in [2,\infty)$ we have
\begin{equation*}
\|(\cG f)^{\#}\|_{L_p(\bR^{d+1})}\leq C \|f\|_{L_p(\bR^{d+1})}
\end{equation*}
for all $f\in L_2(\bR^{d+1})\cap L_\infty(\bR^{d+1})$. Using the Fefferman-Stein theorem again, we get
\begin{equation}\label{eqn 07.02.15:15}
\|\cG f\|_{L_p(\bR^{d+1})}\leq C \|f\|_{L_p(\bR^{d+1})}
\end{equation}
for $p\in[2,\infty)$. For $p\in (1,2)$ one can prove \eqref{eqn 07.02.15:15} using the standard duality argument.

\textbf{Step 2.} Proof of \eqref{eqn 07.02.14:56} for general $p,q\in(1,\infty)$.  
\\
Extend $q_{\alpha,\alpha+1}(t,\cdot):=0$ for $t\leq 0$. For each $(t,s)\in\bR^2$, we define the operator $G_{t,s}$ as follows:
\begin{equation*}
G_{t,s}f(\vec{x}):=\int_{\bR^d} q_{\alpha,\alpha+1}(t-s,\vec{x}-\vec{y})  f(\vec{y}) \, \mathrm{d}\vec{y}, \quad f\in C_c^\infty(\R^{d}).
\end{equation*}
Let $p\in(1,\infty)$. Then, by Lemma \ref{lem 06.08.14:52}, we have
\begin{align*}
\|G_{t,s}f\|_{L_p(\bR^{d})}
\leq \|f\|_{L_p(\bR^{d})} \int_{\bR^d} |q_{\alpha,\alpha+1}(t-s,\vec{x}-\vec{y})|\mathrm{d}y  \leq C(t-s)^{-1}\|f\|_{L_p(\bR^{d})}.
\end{align*}
Hence, the operator $G_{t,s}$ is uniquely extendible to $L_p(\R^{d})$ for $t\neq s$. Denote
\begin{equation*}
Q:=[t_0,t_0+\delta), \quad Q^*:=[t_0-\delta,t_0+2\delta), \quad  \delta>0.
\end{equation*}
Then for $t\notin Q^*$ and $s_1,s_2\in Q$, we can easily see that
$$
|s_1-s_2|\leq\delta, \quad |t-(t_0+\delta)|\geq\delta.
$$
Also for such $t,s_{1},s_{2}$, and for any $f\in L_p$ such that $\|f\|_{L_p}=1$, using Minkowski's inequality, we have
\begin{align*}
\|G_{t,s_{1}}f-G_{t,s_{2}}f\|_{L_p}&\leq \|f\|_{L_p} \int_{\bR^d} \left|q_{\alpha,\alpha+1}(t-s_{1},\vec{x}-\vec{y})-q_{\alpha,\alpha+1}(t-s_{2},\vec{x}-\vec{y})\right| \mathrm{d}\vec{y}
\\
&\leq  \int_{\bR^{d_{1}}} \int_0^1 |\partial_{t}q_{\alpha,\alpha+1}(t-us_1-(1-u)s_2,y_{1})| |s_1-s_2| \mathrm{d}u \mathrm{d}y
\\
&\leq \frac{C|s_1-s_2|}{(t-(t_0+\delta))^2},
\end{align*}
where the last inequality holds due to Lemma \ref{lem 06.08.14:52}. Here, recall that $\cK(t,s)=0$ if $t\leq s$. This yields that
\begin{align*}
\|G_{t,s_{1}}-G_{t,s_{2}}\|_{\Lambda} \leq \frac{C|s_1-s_2|}{(t-(t_0+\delta))^2}.
\end{align*}
where $\|\cdot\|_{\Lambda}$ denotes the operator norm on $L_p(\R^{d})$. Therefore,
\begin{align*}
&\int_{\bR\setminus Q^*} \|G_{t,s_{1}}-G_{t,s_{2}}\|_{\Lambda} \mathrm{d}t \leq C \int_{\bR\setminus Q^*}\frac{|s_1-s_2|}{(t-(t_0+\delta))^2} \mathrm{d}t
\\
&\leq C|s_1-s_2|\int_{|t-(t_0+\delta)|\geq \delta}\frac{1}{(t-(t_0+\delta))^2} \mathrm{d}t \leq N\delta \int_\delta^\infty t^{-2}\mathrm{d}t \leq C.
\end{align*}
Furthermore, by following the argument of \cite[Section 7]{krylov2001caideron}, one can easily check that for almost every $t$ outside of the support of $f\in C_c^\infty(\bR;L_p(\R^{d}))$,
\begin{equation*}
\cG f(t,\vec{x})=\int_{-\infty}^\infty G_{t,s}f(s,\vec{x})\mathrm{d}s
\end{equation*}
where $\cG$ denotes the  extension  to  $L_p(\bR^{d+1})$ which is verified in Step 1. Hence, by the Banach space-valued version of the Calder\'on-Zygmund theorem (\textit{e.g.} \cite[Theorem 4.1]{krylov2001caideron}), our assertion is proved for $1<q\leq p$.

For $1<p<q<\infty$, use the duality argument in Step 1 again. The theorem is proved.
\end{proof}

\mysection{Trace and Extension Theorem for Solution Spaces}\label{sec 01.17.17:00}
In this section, we establish the trace and extension theorem for the solution space $\mathbb{H}_{q,p}^{\alpha,\vec{\phi},\gamma+2}(T)$.
\begin{theorem}
\label{25.03.13.20.52}
    Let $p,q\in(1,\infty)$ and  $\alpha\in(0,1]$.
    Suppose that $\alpha q>1$.
    \begin{enumerate}[(i)]
        \item Then for any $u\in\mathbb{H}_{q,p}^{\alpha,\vec{\phi},\gamma}(T) \cap H^{\vec{\phi},\gamma+2}_{q,p}(T)$,
    $$
    \|u(0,\cdot)\|_{B_{p,q}^{\vec{\phi},\gamma+2-2/(\alpha q)}}\leq C\left(\|u\|_{\mathbb{H}_{q,p}^{\alpha,\vec{\phi},\gamma}(T)} + \|u\|_{H^{\vec{\phi},\gamma+2}_{q,p}(T)}\right),
    $$
    where $C$ is independent of $u$ and $u(0,\cdot)$.
    \item Then for any $u_{0} \in B^{\vec{\phi},\gamma+2-2/(\alpha q)}_{q,p}$, there exists $u\in \mathbb{H}_{q,p}^{\alpha,\vec{\phi},\gamma}(T) \cap H^{\vec{\phi},\gamma+2}_{q,p}(T)$ such that $u(0)=u_0$ in the sense of Definition \ref{def 01.06.16:16} with the estimate
$$
\|u\|_{\mathbb{H}_{q,p}^{\alpha,\vec{\phi},\gamma}(T)} + \|u\|_{H^{\vec{\phi},\gamma+2}_{q,p}(T)}\leq C\|u_0\|_{B^{\vec{\phi},\gamma+2-2/\alpha q}_{q,p}},
$$
where $C$ is independent of $u$, $f$ and $u_0$.
    \end{enumerate}
\end{theorem}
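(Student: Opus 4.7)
The plan is to combine the abstract generalized real interpolation trace theorem developed in \cite{CLSW2023trace,KW2025} with a Littlewood--Paley identification of the resulting interpolation space with the Besov space $B_{p,q}^{\vec{\phi},\gamma+2-2/(\alpha q)}$. First, I would reduce to the base case $\gamma=0$ using Proposition \ref{prop 03.21.16:12}~(iii): the lift $(1-\vec{\phi}\cdot\Delta_{\vec{d}})^{\gamma/2}$ is an isometry from $\bH_{q,p}^{\alpha,\vec{\phi},\gamma}(T)\cap H_{q,p}^{\vec{\phi},\gamma+2}(T)$ onto $\bH_{q,p}^{\alpha,\vec{\phi},0}(T)\cap H_{q,p}^{\vec{\phi},2}(T)$, and by the Fourier-multiplier structure of $\Delta_j^{\vec{\phi}}$ it is likewise an isometry on the scale $B_{p,q}^{\vec{\phi},\cdot}$. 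Thus both (i) and (ii) transfer and we may assume $\gamma=0$.

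Second, with $\gamma=0$, I would invoke the abstract Caputo trace/extension theorem in the framework of generalized real interpolation for the Banach couple $(H_p^{\vec{\phi},2},L_p)$. The ingredients needed from the abstract machinery are: (a) the semigroup-type bound on $(H_p^{\vec{\phi},2},L_p)$ provided by the operator $\vec{\phi}\cdot\Delta_{\vec{d}}$ acting as the negative generator of the convolution family with heat kernel $p(t,\vec{x})$; (b) the maximal regularity of $\cG_0$ established in Theorem \ref{thm 07.22.11:31}; and (c) the Mittag--Leffler-type subordination built from $q(t,\vec{x})$ of Section \ref{sec 07.31.15:14}. These yield
\[
\bH_{q,p}^{\alpha,\vec{\phi},0}(T)\cap H_{q,p}^{\vec{\phi},2}(T)\hookrightarrow C\bigl([0,T];(H_p^{\vec{\phi},2},L_p)_{1/(\alpha q),q}\bigr)
\]
with a bounded right inverse, thereby handling both trace and extension modulo the identification of the interpolation endpoint.

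Third, the crux of the proof is the identification
\[
(H_p^{\vec{\phi},2},L_p)_{\frac{1}{\alpha q},q}=B_{p,q}^{\vec{\phi},2-2/(\alpha q)},
\]
which is precisely the reason for introducing the anisotropic projections $\Delta_j^{\vec{\phi}}$ based on the symbol $m_{\vec{\phi}}(\xi)=\sum_{i=1}^\ell\phi_i(|\xi_i|^2)$. The plan here is: prove the $L_p$ square-function equivalence $\|u\|_{H_p^{\vec{\phi},s}}\simeq \|S_0^{\vec{\phi}}u\|_{L_p}+\|(\sum_j 2^{2sj}|\Delta_j^{\vec{\phi}}u|^2)^{1/2}\|_{L_p}$ by a vector-valued Calder\'on--Zygmund argument that avoids classical Mikhlin/Marcinkiewicz (which fail for $m_{\vec{\phi}}$, cf.\ \cite[Remark 2.14]{CKP23}) and instead uses the sharp heat-kernel estimates of Theorem \ref{lem 06.01.17:52} together with the scaling \textbf{WLS($c_0,\delta_0$)}; then compute the K-functional for the couple $(H_p^{\vec{\phi},2},L_p)$ by splitting at frequency level $2^j\simeq t^{-1}$, obtaining $K(t,u)\simeq \|S_0^{\vec{\phi}}u\|_{L_p}+\sum_{2^{-j}\leq t}\|\Delta_j^{\vec{\phi}}u\|_{L_p}+t\sum_{2^{-j}>t}2^j\|\Delta_j^{\vec{\phi}}u\|_{L_p}$; and finally invoke the discrete characterization of real interpolation to conclude the equivalence of norms.

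The main obstacle is the square-function equivalence underlying the third step, since the standard Littlewood--Paley calculus is unavailable for the coordinate-wise, non-smooth symbol $m_{\vec{\phi}}$. I expect this to be overcome by combining pointwise upper bounds on $\Delta_j^{\vec{\phi}}$-kernels (obtained by freezing the scale and applying Theorem \ref{lem 06.01.17:52} with $\alpha=1$ to the Poisson-type approximation of $\cF_1[\Psi](2^{-j}m_{\vec{\phi}})$) with Khintchine-type randomization, exactly paralleling the isotropic argument. Once the Besov identification is in hand, the extension in (ii) is realized concretely by $u(t,\vec{x}):=\int_{\bR^d}q(t,\vec{y})u_0(\vec{x}-\vec{y})\,\mathrm{d}\vec{y}$, and verification that $u$ lies in $\bH_{q,p}^{\alpha,\vec{\phi},0}(T)\cap H_{q,p}^{\vec{\phi},2}(T)$ with $u(0,\cdot)=u_0$ in the sense of Definition \ref{def 01.06.16:16} follows from the already established kernel estimates and the bounded right-inverse from step two.
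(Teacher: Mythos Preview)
Your overall architecture matches the paper: reduce via the abstract trace/extension machinery of \cite{CLSW2023trace} to identifying the interpolation space, and then establish
\[
(H_p^{\vec{\phi},\gamma+2},H_p^{\vec{\phi},\gamma})_{1/(\alpha q),q}=B_{p,q}^{\vec{\phi},\gamma+2-2/(\alpha q)}
\]
through a Littlewood--Paley characterization built on $\Delta_j^{\vec{\phi}}$. That much is correct and is exactly what the paper does (Proposition \ref{25.02.07.16.36}, Proposition \ref{exs}, Corollary \ref{thm 02.13.13:30}).

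However, your plan for the square-function equivalence rests on a misconception. You assert that Mikhlin/Marcinkiewicz ``fail for $m_{\vec{\phi}}$'' and therefore propose to recover the $L_p$--Littlewood--Paley equivalence from the heat-kernel estimates of Theorem \ref{lem 06.01.17:52} via a ``Poisson-type approximation of $\cF_1[\Psi](2^{-j}m_{\vec{\phi}})$.'' This is neither necessary nor workable. The failure of classical multiplier theorems referenced in \cite[Remark 2.14]{CKP23} concerns the symbol of the \emph{solution operator} $\cG$, not the Littlewood--Paley multipliers. The paper's key observation (see \eqref{eqn 02.10.13:42}) is that the raw symbol satisfies
\[
|D_{\xi^{j_1}}\cdots D_{\xi^{j_k}}m_{\vec{\phi}}(\xi)|\leq C\,m_{\vec{\phi}}(\xi)\prod_{i=1}^k|\xi^{j_i}|^{-1},
\]
which, after composition with the smooth cutoff $\eta_{s/2}(\cdot)=\cF_1[\Psi](\cdot)(\cdot)^{-s/2}$ via Fa\`a di Bruno, gives exactly the Marcinkiewicz-type decay needed for the multipliers $\eta_{s/2}(2^{-j}m_{\vec{\phi}}(\xi))$. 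The paper then runs a direct Khintchine/Marcinkiewicz argument to obtain Proposition \ref{25.02.07.16.36}. Your detour through Theorem \ref{lem 06.01.17:52} is also technically mismatched: that theorem estimates the \emph{time-fractional} kernel $q_{\alpha,\beta}$ with $\alpha\in(0,1)$, not the purely spatial Littlewood--Paley kernels $\Psi_j^{\vec{\phi}}$, and there is no evident Poisson-type identity linking $\cF_1[\Psi](2^{-j}m_{\vec{\phi}})$ to those kernels.

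Two smaller points. First, the paper does \emph{not} use the maximal regularity of Theorem \ref{thm 07.22.11:31} as an input to the trace theorem; the results of \cite{CLSW2023trace} invoked here are purely interpolation-theoretic, so your item (b) is extraneous. Second, for the extension in (ii) the paper simply quotes \cite[Theorem 1.6]{CLSW2023trace} rather than constructing $u$ explicitly as $q(t,\cdot)\ast u_0$; your concrete formula may work but would require separate verification of membership in $\bH_{q,p}^{\alpha,\vec{\phi},\gamma}(T)$, which is more labor than appealing to the abstract extension.
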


To prove this theorem, we employ established trace and extension results, such as those in \cite{ALM2023,CLSW2023trace,KW2025,Z05abstract,Z06abstract}. 
In particular, we utilize the framework developed in \cite{CLSW2023trace}, which provides a detailed characterization of real interpolation spaces.
Since generalized real interpolation theory plays a central role in \cite{CLSW2023trace}, we begin by recalling several fundamental concepts, following the exposition therein.

\begin{definition}
A function $\psi:\mathbb{R}_{+} \to \mathbb{R}_{+}$ is said to belong to the class $\mathcal{I}_{o}(0,1)$ if it satisfies the following conditions: 
\begin{align*}
&\sup_{t>0} \frac{\psi(\lambda t)}{\psi(t)} = o(1) \quad \text{as} \quad \lambda \downarrow 0, 
\\
&\sup_{t>0} \frac{\psi(\lambda t)}{\psi(t)} = o(\lambda) \quad \text{as} \quad \lambda \to \infty.
\end{align*}
\end{definition}
\begin{definition}
Let \( A_0 \) and \( A_1 \) be Banach spaces. The pair \( (A_0, A_1) \) is called an \emph{interpolation couple} if both \( A_0 \) and \( A_1 \) are continuously embedded in a common topological vector space \( V \). 
\end{definition}
It follows that the two subspaces of \( V \) 
\begin{equation*} 
\begin{gathered} 
A_0\cap A_1 = \{a\in V: a\in A_0,\, a\in A_1\},\\ A_0+A_1 = \{a\in V: a = a_0 + a_1,\, a_0\in A_0,\, a_1\in A_1\} 
\end{gathered} 
\end{equation*} 
are Banach spaces with the respective norms: 
\begin{equation*} 
\begin{gathered} 
\|a\|_{A_0\cap A_1} = \max(\|a\|_{A_0},\|a\|_{A_1}),\\ \|a\|_{A_0+A_1} = \inf\{\|a_0\|_{A_0}+\|a_1\|_{A_1}: a=a_0+a_1,\, a_0\in A_0,\, a_1\in A_1\}. 
\end{gathered} 
\end{equation*}
Given an interpolation couple \( (A_0, A_1) \),  we define the \( K \)-functional for \( t>0 \) as 
\begin{equation*} 
K(t, a; A_0, A_1) := \inf \{ \|a_0\|_{A_0} + t\|a_1\|_{A_1}: a=a_0+a_1,\, a_0\in A_0,\, a_1\in A_1\}. 
\end{equation*}
For measurable functions $F:\bR_+\rightarrow [0,\infty]$, a function \( \psi \in \mathcal{I}_{o}(0,1) \), and a parameter \( p \in [1,\infty] \), the functional $\Phi^{\psi}_{p}(F)$ is defined by
$$
\Phi^{\psi}_{p}(F):=\begin{cases}
    \left(\int_0^\infty \left(\psi(t^{-1})F(t) \right)^p \frac{\mathrm{d}t}{t}\right)^{1/p}\quad &\textrm{if}\quad p\in[1,\infty),\\
    \sup_{t>0}\psi(t^{-1})F(t)\quad &\textrm{if}\quad p=\infty.
    \end{cases}
$$
The interpolation space \( (A_0, A_1)_{\psi, p} \) is given by 
\begin{equation*} 
(A_0, A_1)_{\psi, p} := \{a\in A_0 + A_1 : \|a\|_{(A_0, A_1)_{\psi, p}} := \Phi^{\psi}_{p}(K(\cdot,a;A_0,A_1)) <\infty \}. 
\end{equation*}
For details on $(A_0, A_1)_{\psi, p}$, see \cite{CLSW2023trace}.

As a preliminary step for proving Theorem \ref{25.03.13.20.52}, we introduce the Littlewood-Paley characterization of \( H_{p}^{\vec{\phi},s} \).
\begin{proposition}
\label{25.02.07.16.36}
    Let $p\in(1,\infty)$ and $s\in\mathbb{R}$.
    For $f\in\mathcal{S}(\mathbb{R}^d)$, we have the equivalence
    $$
    \|f\|_{H_p^{\vec{\phi},s}}\simeq \|S_0^{\vec{\phi}}f\|_{L_p}+\left\|\left(\sum_{j=1}^{\infty}2^{js}|\Delta_j^{\vec{\phi}}f|^2\right)^{1/2}\right\|_{L_p},
    $$
    and
    $$
    \|f\|_{\mathring{H}_p^{\vec{\phi},s}}\simeq \left\|\left(\sum_{j\in\mathbb{Z}}2^{js}|\Delta_j^{\vec{\phi}}f|^2\right)^{1/2}\right\|_{L_p},
    $$
where $\mathring{H}^{\vec{\phi},s}_{p}$ is the space of distributions equipped with the norm
$\|f\|_{\mathring{H}^{\vec{\phi},s}_{p}} =: \| (\vec{\phi} \cdot \Delta_{\vec{d}})^{s/2} f\|_{L_{p}}$.
\end{proposition}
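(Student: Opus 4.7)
The plan is to combine a frequency-localization observation with a vector-valued Littlewood--Paley square function estimate for the anisotropic projections $\{\Delta_j^{\vec{\phi}}\}$. The starting point is that on the Fourier support of $\Delta_j^{\vec{\phi}} f$ the symbol $m_{\vec{\phi}}(\xi)=\sum_i\phi_i(|\xi_i|^2)$ satisfies $m_{\vec{\phi}}(\xi)\simeq 2^j$, so the Bessel multiplier $(1+m_{\vec{\phi}}(\xi))^{s/2}$ is comparable to $2^{js/2}$. This lets one write
\begin{equation*}
(1-\vec{\phi}\cdot\Delta_{\vec{d}})^{s/2}\Delta_j^{\vec{\phi}}f = 2^{js/2}\,T_j\,\Delta_j^{\vec{\phi}}f,
\end{equation*}
where $T_j$ has symbol $2^{-js/2}(1+m_{\vec{\phi}}(\xi))^{s/2}\widetilde{\chi}(2^{-j}m_{\vec{\phi}}(\xi))$ for a smooth cutoff $\widetilde{\chi}$ supported in $[2^{-2},2^{2}]$. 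Because this symbol depends on $\xi$ only through the scalar $m_{\vec{\phi}}(\xi)$ and the function $\lambda\mapsto 2^{-js/2}(1+\lambda)^{s/2}\widetilde{\chi}(2^{-j}\lambda)$ is a Mikhlin symbol in $\lambda$ with bounds uniform in $j\ge 1$, a spectral-multiplier argument for the generator $-\vec{\phi}\cdot\Delta_{\vec{d}}$ yields uniform $L_p(\ell^2)$-boundedness of $\{T_j\}$.

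Next, I would establish the pure Littlewood--Paley equivalence
\begin{equation*}
\|g\|_{L_p}\simeq \|S_0^{\vec{\phi}}g\|_{L_p}+\left\|\Bigl(\sum_{j\ge 1}|\Delta_j^{\vec{\phi}}g|^{2}\Bigr)^{1/2}\right\|_{L_p}
\end{equation*}
for $g\in L_p$. One direction follows from randomisation and Khintchine's inequality, reducing the estimate to a uniform $L_p$-bound for the multiplier $\sum_j r_j(\omega)\mathcal{F}_1[\Psi](2^{-j}m_{\vec{\phi}}(\cdot))$; the reverse direction follows by duality together with the identity $g=S_0^{\vec{\phi}}g+\sum_{j\ge 1}\Delta_j^{\vec{\phi}}g$ and an almost-orthogonality argument. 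Both estimates are controlled through the functional calculus of $-\vec{\phi}\cdot\Delta_{\vec{d}}$; the needed bounded $H^{\infty}$-calculus on $L_p$ is available because the heat semigroup $e^{t\vec{\phi}\cdot\Delta_{\vec{d}}}$ enjoys the Gaussian-type upper bounds supplied by Theorem~\ref{lem 06.01.17:52} (applied with $\alpha=1$, $\beta=0$), which in turn imply the Davies--Gaffney and sub-Gaussian off-diagonal estimates that drive the Mikhlin-type theorem in this non-isotropic setting.

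Combining the two ingredients, apply the square function inequality to $g=(1-\vec{\phi}\cdot\Delta_{\vec{d}})^{s/2}f$, substitute the factorization $(1-\vec{\phi}\cdot\Delta_{\vec{d}})^{s/2}\Delta_j^{\vec{\phi}}f=2^{js/2}T_j\Delta_j^{\vec{\phi}}f$, and use the uniform $L_p(\ell^2)$-bound for $\{T_j\}$ (together with its inverse on the truncated range) to pass between $\sum_j|2^{js/2}T_j\Delta_j^{\vec{\phi}}f|^{2}$ and $\sum_j 2^{js}|\Delta_j^{\vec{\phi}}f|^{2}$. The same argument, with the two-sided index $j\in\mathbb{Z}$ and no low-frequency piece, handles the homogeneous space $\mathring{H}_p^{\vec{\phi},s}$; here one replaces $(1+m_{\vec{\phi}})^{s/2}$ by $m_{\vec{\phi}}^{s/2}$ and uses that $m_{\vec{\phi}}^{s/2}\simeq 2^{js/2}$ on the Fourier support of $\Delta_j^{\vec{\phi}}f$ for every $j\in\mathbb{Z}$.

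The main obstacle is the second step: proving the anisotropic $L_p(\ell^2)$ Littlewood--Paley inequality. Classical Mikhlin--H\"ormander conditions fail for the symbol $\mathcal{F}_1[\Psi](2^{-j}m_{\vec{\phi}}(\cdot))$ because $m_{\vec{\phi}}$ is not homogeneous under isotropic dilations, as emphasised in the introduction. Passing through the spectral calculus of $-\vec{\phi}\cdot\Delta_{\vec{d}}$ (rather than treating the multiplier directly on Fourier side) circumvents this obstruction, but requires the sharp heat-kernel input from Section~\ref{sec 07.31.15:14}; arranging that transfer carefully, with constants independent of $j$, is the technical heart of the proof.
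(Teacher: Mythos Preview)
Your high-level plan (factorize $(1-\vec{\phi}\cdot\Delta_{\vec{d}})^{s/2}\Delta_j^{\vec{\phi}}$ via a multiplier depending only on $m_{\vec{\phi}}$, then reduce to an $L_p(\ell^2)$ square-function inequality, handled through Khintchine randomisation) matches the paper's strategy. The divergence is in how you propose to bound the resulting Fourier multipliers.

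You assert that ``classical Mikhlin--H\"ormander conditions fail'' and therefore route the argument through an $H^\infty$-functional calculus for $-\vec{\phi}\cdot\Delta_{\vec{d}}$, driven by heat-kernel bounds from Section~\ref{sec 07.31.15:14}. This is a misdiagnosis. The warning in the introduction concerns the \emph{solution operator} $\mathcal{G}$, not the Littlewood--Paley projections. For symbols of the form $\eta(2^{-j}m_{\vec{\phi}}(\xi))$ with $\eta$ smooth and supported in $[1/4,4]$, the paper observes directly that the Bernstein representation \eqref{23.03.08.12.52} gives $|\phi_i^{(n)}(\lambda)|\le C(n)\lambda^{-n}\phi_i(\lambda)$, whence
\[
|D_{\xi^{j_1}}\cdots D_{\xi^{j_k}}m_{\vec{\phi}}(\xi)|\le C\,m_{\vec{\phi}}(\xi)\prod_{i=1}^{k}|\xi^{j_i}|^{-1}.
\]
Combined with Fa\`a di Bruno, this yields the \emph{Marcinkiewicz} product-type condition $\prod_i|\xi^{j_i}|\cdot|D_{\xi^{j_1}}\cdots D_{\xi^{j_k}}m(\xi)|\le C$ uniformly in $j$, so the classical Marcinkiewicz multiplier theorem applies directly on the Fourier side. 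No spectral-multiplier machinery, Davies--Gaffney estimates, or heat-kernel input is needed; the entire proof stays at the level of elementary multiplier calculus.

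Your proposed detour is not wrong in spirit, but it is substantially heavier, and two of its ingredients are shaky as stated: Theorem~\ref{lem 06.01.17:52} is formulated for $\alpha\in(0,1)$, so invoking it with $\alpha=1$ requires going back to the product kernel $p(t,\vec{x})=\prod_i p_i(t,x_i)$ and Theorem~\ref{pestimate} instead; and the off-diagonal/Davies--Gaffney framework you allude to is tailored to a single metric-measure structure, whereas here the natural geometry is genuinely anisotropic, so one would have to verify that the relevant spectral-multiplier theorem actually applies in this setting. The paper avoids all of this by noticing that Marcinkiewicz (as opposed to the isotropic Mikhlin condition) is exactly suited to the coordinate-wise structure of $m_{\vec{\phi}}$.
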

\begin{proof}
    First, we prove the second relation.
    Let $\{Z_j\}_{j\in\mathbb{Z}}$ be a sequence of independent identically distributed random variables with
    $$
    \mathbb{P}(Z_i=1)=\mathbb{P}(Z_i=-1)=\frac{1}{2}.
    $$
    One can check that
\begin{align}\label{eqn 02.10.13:21}
    2^{js/2}\mathcal{F}_d[\Delta_j^{\vec{\phi}}f](\xi)=\frac{\mathcal{F}_1[\Psi](2^{-j}m_{\vec{\phi}}(\xi))}{2^{-js/2}(m_{\vec{\phi}}(\xi))^{s/2}}(m_{\vec{\phi}}(\xi))^{s/2}\mathcal{F}_d[f](\xi)=\eta_{s/2}(2^{-j}m_{\vec{\phi}}(\xi))\mathcal{F}_d[(\vec{\phi}\cdot\Delta_{\vec{d}})^{s/2}f],
\end{align}
    where $\eta_{s/2}(\lambda):=\mathcal{F}_1[\Psi](\lambda)\lambda^{-s/2}$.
    By Khintchine's inequality and \eqref{eqn 02.10.13:21},
    \begin{align}\label{eqn 02.10.13:47}
    \left\|\left(\sum_{j\in\mathbb{Z}}2^{js}|\Delta_j^{\vec{\phi}}f|^2\right)^{1/2}\right\|_{L_p}^p&=\int_{\mathbb{R}^d}\left(\sum_{j\in\mathbb{Z}}2^{js}|\Delta_j^{\vec{\phi}}f(x)|^2\right)^{p/2}\mathrm{d}x   \nonumber
\\
    &\simeq \int_{\mathbb{R}^d}\mathbb{E}\left[\left|\sum_{j\in\mathbb{Z}}2^{js/2}\Delta_j^{\vec{\phi}}f(x)Z_j\right|^{p}\right]\mathrm{d}x   \nonumber
\\
    &=\mathbb{E}\left[\left\|M_{Z}^{\vec{\phi},s}((\vec{\phi}\cdot\Delta_{\vec{d}})^{\gamma/2}f)\right\|_{L_p}^p\right],
    \end{align}
where
$$
\mathcal{F}_d[M_{Z}^{\vec{\phi},s}f](\xi)=m_{Z}^{\vec{\phi},s}(\xi)\mathcal{F}_d[f](\xi)=\left(\sum_{j\in\mathbb{Z}}\eta_{s/2}(2^{-j}m_{\vec{\phi}}(\xi))Z_j\right)\mathcal{F}_d[f](\xi).
$$
Using the inequality
\begin{align*}
|\phi^{(n)}_{i}(\lambda)| \leq C(n) \lambda^{-n}\phi_{i}(\lambda), \quad \forall\, \lambda>0, \quad \forall\, n\in \bN,
\end{align*}
which can be derived from \eqref{23.03.08.12.52} we see that
\begin{align}\label{eqn 02.10.13:42}
	|D_{\xi^{j_1}}\cdots D_{\xi^{j_k}}m_{\vec{\phi}}(\xi)|\leq C(d) m_{\vec{\phi}}(\xi)\prod_{i=1}^{k}|\xi^{j_i}|^{-1}
\end{align}
Applying \eqref{eqn 02.10.13:42} to Fa\'a di Bruno's formula (see \cite[Proposition 1]{Hardy2006}), we obtain
$$
\left|D_{\xi^{j_1}}\cdots D_{\xi^{j_k}}\left(\sum_{j\in\mathbb{Z}}\eta_{s/2}^{\vec{\phi}}(2^{-j}m_{\vec{\phi}}(\cdot))Z_j\right)(\xi)\right|\leq C(d,\gamma,\Psi,k)\prod_{i=1}^{k}|\xi^{j_i}|^{-1}.
$$
Hence, we can apply the Marcinkiewicz multiplier theorem (\textit{e.g.} \cite[Corollary 6.2.5]{grafakos2014classical}) to deduce (recall \eqref{eqn 02.10.13:47})
\begin{align}\label{eqn 02.10.15:22}
\left\|\left(\sum_{j\in\mathbb{Z}}2^{js}|\Delta_j^{\vec{\phi}}f|^2\right)^{1/2}\right\|_{L_p}^p & \leq C \, \mathbb{E}\left[\left\|M_{Z}^{\vec{\phi},s}((\vec{\phi}\cdot\Delta_{\vec{d}})^{s/2}f)\right\|_{L_p}^p\right]  \nonumber
\\
&\leq C \|(\vec{\phi}\cdot\Delta_{\vec{d}})^{s/2}f\|_{L_p}^p=C \|f\|_{\mathring{H}_p^{\vec{\phi},s}}^p.
\end{align}
Using the duality, we also obtain the converse inequality.

Now we consider the first relation.
Since 
$$
\mathcal{F}_1[\Psi](2^{-j}\lambda)=\mathcal{F}_1[\Psi](2^{-j}\lambda)(\mathcal{F}_1[\Psi](2^{-(j-1)}\lambda)+\mathcal{F}_1[\Psi](2^{-j}\lambda)+\mathcal{F}_1[\Psi](2^{-(j+1)}\lambda)),
$$
we have
\begin{align}\label{eqn 02.10.14:56}
\Delta_j^{\vec{\phi}}=
\Delta_j^{\vec{\phi}}(\Delta_{j-1}^{\vec{\phi}}+\Delta_j^{\vec{\phi}}+\Delta_{j+1}^{\vec{\phi}}) \quad \forall\, j\in \mathbb{Z}.
\end{align}
Using \eqref{eqn 02.10.14:56} we have the following correspondence of \eqref{eqn 02.10.13:21}
\begin{align}\label{eqn 02.10.15:19}
& 2^{js/2}\mathcal{F}_d[\Delta_j^{\vec{\phi}}f](\xi)   \nonumber
\\
 =& \eta_{s/2}^{\vec{\phi}}(2^{-j}m_{\vec{\phi}}(\xi))\mathcal{F}_d[M^{\vec{\phi},s}_{\infty}(1-\vec{\phi}\cdot\Delta_{\vec{d}})^{s/2}f] \quad \forall\, j\geq 1,
\end{align}
where
\begin{align*}
M^{\vec{\phi},s}_{\infty} := (\vec{\phi}\cdot\Delta_{\vec{d}})^{s/2}(1-S^{\vec{\phi}}_0 + \Delta^{\vec{\phi}}_{0}) (1-\vec{\phi}\cdot\Delta_{\vec{d}})^{-s/2}.
\end{align*}
By following the argument from \eqref{eqn 02.10.13:47}  to \eqref{eqn 02.10.15:22} with \eqref{eqn 02.10.15:19}
$$
\|S_0^{\vec{\phi}}f\|_{L_p}+\left\|\left(\sum_{j=1}^{\infty}2^{js}|\Delta_j^{\vec{\phi}}f|^2\right)^{1/2}\right\|_{L_p} \leq C \left( \|M_0^{\vec{\phi},s}(1-\vec{\phi}\cdot\Delta_{\vec{d}})^{s/2}f\|_{L_p}+\|M_{\infty}^{\vec{\phi},s}(1-\vec{\phi}\cdot\Delta_{\vec{d}})^{s/2}f\|_{L_p} \right),
$$
where $M_0^{\vec{\phi},s}:=S^{\vec{\phi}}_0(1-\vec{\phi}\cdot\Delta_{\vec{d}})^{-s/2}$.
Using \eqref{eqn 02.10.13:42} and the Marcinkiewicz multiplier theorem, we obtain $L_{p}$-boundedness of operators
$M_0^{\vec{\phi,s}}$ and $M_{\infty}^{\vec{\phi},s}$. Hence, we prove that
\begin{align*}
\|S^{\vec{\phi}}_{0}f\|_{L_{p}} + \left\|   \left( \sum^{\infty}_{j=1} 2^{js} \left| \Delta^{\vec{\phi}}_{j}f  \right|^{2} \right)^{1/2}  \right\|_{L_{p}} \leq C \|f\|_{H^{\vec{\phi},s}_{p}}.
\end{align*}
For the converse, we observe that
$$
\|f\|_{H_p^{\vec{\phi},s}}\leq \|S_0^{\vec{\phi}}f\|_{H_p^{\vec{\phi},s}}+\left\|(1-\vec{\phi}\cdot\Delta_{\vec{d}})^{s/2}(\vec{\phi}\cdot\Delta_{\vec{d}})^{-s/2}(\vec{\phi}\cdot\Delta_{\vec{d}})^{s/2}(1-S_0^{\vec{\phi}})f\right\|_{L_p}.
$$
By the Marcinkiewicz multiplier theorem, $(1-\vec{\phi}\cdot\Delta_{\vec{d}})^{s/2}(S_0^{\vec{\phi}}+\Delta_1^{\vec{\phi}})$  bounded in $L_p$. Hence, using \eqref{eqn 02.10.14:56} we have
\begin{align}\label{eqn 02.10.18:09}
\|S^{\vec{\phi}}_{0}f\|_{H^{\vec{\phi},s}_{p}} &= \|(1-\vec{\phi}\cdot\Delta_{\vec{d}})^{s/2}S^{\vec{\phi}}_{0}f\|_{L_{p}}   \nonumber
\\
& = \|(1-\vec{\phi}\cdot\Delta_{\vec{d}})^{s/2}(S^{\vec{\phi}}_{0}+\Delta^{\vec{\phi}}_{1})S^{\vec{\phi}}_{0}f\|_{L_{p}}  \nonumber
\\
&\leq C \|S^{\vec{\phi}}_{0}f\|_{L_{p}}.
\end{align}
Since
\begin{align*}
(1-S_0)(\vec{\phi}\cdot\Delta_{\vec{d}})^{s/2}f&=\sum_{j=1}^{\infty}(\vec{\phi}\cdot\Delta_{\vec{d}})^{s/2}\Delta_j^{\vec{\phi}}f
\\
&=\sum_{j=1}^{\infty}\left(\frac{(\vec{\phi}\cdot\Delta_{\vec{d}})^{s/2}(\Delta_{j-1}^{\vec{\phi}}+\Delta_j^{\vec{\phi}}+\Delta_{j+1}^{\vec{\phi}})}{2^{js/2}}\right)2^{js/2}\Delta_j^{\vec{\phi}}f
\\
&=:\sum_{j=1}^{\infty}M_j^{\vec{\phi},s}2^{js/2}\Delta_j^{\vec{\phi}}f,
\end{align*}
for any $g\in \mathcal{S}(\mathbb{R}^{d})$, by H\"older's inequality, we have ($p'=p/(p-1)$)
\begin{align*}
\int_{\mathbb{R}^{d}}(1-S_0)(\vec{\phi}\cdot\Delta_{\vec{d}})^{s/2}f(x)g(x)\mathrm{d}x&=\int_{\mathbb{R}^d}\sum_{j=1}^{\infty}(2^{js/2}\Delta_j^{\vec{\phi}})f(x)M_j^{\vec{\phi},s}g(x)\mathrm{d}x
\\
&\leq \left\|\left(\sum_{j=1}^{\infty}2^{js}|\Delta_j^{\vec{\phi}}f|^2\right)^{1/2}\right\|_{L_p}\left\|\left(\sum_{j=1}^{\infty}|M_j^{\vec{\phi},s}g|^2\right)^{1/2}\right\|_{L_{p'}}.
\end{align*}
By following the argument from \eqref{eqn 02.10.13:47}  to \eqref{eqn 02.10.15:22} again,
\begin{align*}
    \left\|\left(\sum_{j=1}^{\infty}|M_j^{\vec{\phi},s}g|^2\right)^{1/2}\right\|_{L_{p'}}^{p'} \simeq  \int_{\mathbb{R}^d}\mathbb{E}\left[\left|\sum_{j=1}^{\infty}M_j^{\vec{\phi},s}g(x)Z_j\right|^{p'}\right]\mathrm{d}x \leq C \|g\|_{L_{p'}}^{p'}.
\end{align*}
Hence, a proper choice of $g$ gives
$$
\|(1-S_0)(\vec{\phi}\cdot\Delta_{\vec{d}})^{s/2}f\|_{L_p} \leq C \left\|\left(\sum_{j=1}^{\infty}2^{js}|\Delta_j^{\vec{\phi}}f|^2\right)^{1/2}\right\|_{L_p}.
$$
Combining this with \eqref{eqn 02.10.18:09}, we have the desired inequality.
The proposition is proved.
\end{proof}

For a Banach space $A$, by $\ell_p(A)$, we denote the set of all $A$-valued sequences $a = (a_j)_{j \in \bZ}$ satisfying $\|a\|_{\ell_p(A)}<\infty$, where 
$$
\| a \|_{\ell_p(A)}:=\begin{cases}
     \left(\sum_{j \in \bZ} \|a_j\|_{A}^{p} \right)^{1/p} \quad &\textrm{for} \quad p\in[1,\infty),\\
     \sup_{j \in \bZ} \|a_j\|_{A} \quad &\textrm{for} \quad p=\infty.
    \end{cases}
$$
Using the Littlewood-Paley characterization of the space $H_p^{\vec{\phi},s}$,  we can derive generalized real interpolation results for Sobolev and Besov spaces.
\begin{proposition}\label{exs}
    Let $p,p_0,p_1\in[1,\infty]$, $q_0,q_1,q\in[1,\infty]$, $s,s_0,s_1\in\bR$ and $\psi\in\cI_o(0,1)$, and let $s_0\neq s_1$.

(i) We have
        \begin{align*}
        (B_{p, q_0}^{\vec{\phi},s_0}, B_{p, q_1}^{\vec{\phi},s_1})_{\psi, q} &= B_{p,q}^{\vec{\phi},\psi(s_0, s_1)},\\
        (\mathring{B}_{p, q_0}^{\vec{\phi},s_0}, \mathring{B}_{p, q_1}^{\vec{\phi},s_1})_{\psi, q} &= \mathring{B}_{p,q}^{\vec{\phi},\psi(s_0, s_1)},
    \end{align*}
    where $B_{p,q}^{\vec{\phi},\psi(s_0, s_1)}$ and $\mathring{B}_{p,q}^{\vec{\phi},\psi(s_0, s_1)}$ are spaces equipped with norms given by
    \begin{align*}
 \|f\|_{B_{p,q}^{\vec{\phi},\psi(s_0,s_1)}}&:=\|S_0^{\vec{\phi}}f\|_{L_p}+\left(\sum_{j=1}^{\infty}\psi(2^{(s_1-s_0)/2})^q2^{js_0 q/2}\|\Delta_{j}^{\vec{\phi}}f\|_{L_p}^{q}\right)^{1/q},\\
 \|f\|_{\mathring{B}_{p,q}^{\vec{\phi},\gamma}}&:=\left(\sum_{j\in\mathbb{Z}}\psi(2^{(s_1-s_0)/2})^q2^{js_0 q/2}\|\Delta_{j}^{\vec{\phi}}f\|_{L_p}^{q}\right)^{1/q}
    \end{align*}

(ii) If $p\in(1,\infty)$, then 
        \begin{align*}
    (H_p^{\vec{\phi},s_0}, H_p^{\vec{\phi},s_1})_{\psi, q} &= B_{p,q}^{\vec{\phi},\psi(s_0, s_1)},\\
    (\mathring{H}_p^{\vec{\phi},s_0}, \mathring{H}_p^{\vec{\phi},s_1})_{\psi, q} &= \mathring{B}_{p,q}^{\vec{\phi},\psi(s_0, s_1)}.
\end{align*}
\end{proposition}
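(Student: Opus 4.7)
The plan is to combine the Littlewood--Paley characterizations of Proposition \ref{25.02.07.16.36} with a standard retraction/coretraction argument, adapted to the generalized real interpolation functor $(\cdot,\cdot)_{\psi,q}$ developed in \cite{CLSW2023trace}. For part (i), define the coretraction
$$
Sf:=(S_0^{\vec\phi}f,\,\Delta_1^{\vec\phi}f,\,\Delta_2^{\vec\phi}f,\,\ldots),
$$
together with the reconstruction $R(g_0,(g_j)_{j\geq1}):=g_0+\sum_{j\geq1}\widetilde{\Delta}_j^{\vec\phi}g_j$, where $\widetilde{\Delta}_j^{\vec\phi}:=\Delta_{j-1}^{\vec\phi}+\Delta_j^{\vec\phi}+\Delta_{j+1}^{\vec\phi}$ satisfies $\widetilde{\Delta}_j^{\vec\phi}\Delta_j^{\vec\phi}=\Delta_j^{\vec\phi}$ by \eqref{eqn 02.10.14:56}. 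Proposition \ref{25.02.07.16.36} then shows that $(R,S)$ realizes $B_{p,q}^{\vec\phi,s}$ as a retract of the weighted sequence space
$$
\mathsf{E}_{p,q}^{s}:=L_p\oplus\ell_q^{s/2}(L_p),\qquad\|(a_j)\|_{\ell_q^{s/2}(L_p)}:=\Bigl(\sum_{j\geq1}2^{jsq/2}\|a_j\|_{L_p}^q\Bigr)^{1/q}.
$$

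The retraction/coretraction principle extends to the generalized $\Phi^\psi_q$-based functor (the classical proof goes through verbatim once the $\theta$-integral is replaced by $\Phi^\psi_q$, \emph{cf.}~\cite{CLSW2023trace}), so the Besov identity reduces to the weighted-sequence formula
$$
\bigl(\ell_{q_0}^{s_0/2}(L_p),\,\ell_{q_1}^{s_1/2}(L_p)\bigr)_{\psi,q}=\ell_q^{\psi(s_0,s_1)/2}(L_p),
$$
with weight $\psi(2^{j(s_1-s_0)/2})2^{js_0/2}$. This identity is established in the standard way: the $K$-functional of a monomial sequence satisfies $K(t,(a_j);\ell_{q_0}^{s_0/2}(L_p),\ell_{q_1}^{s_1/2}(L_p))\simeq\min(2^{js_0/2},t\,2^{js_1/2})\|a_j\|_{L_p}$ in the relevant regime, which after summation in $j$ and substitution into $\Phi^\psi_q$ yields the weight $\psi(2^{j(s_1-s_0)/2})2^{js_0/2}$ via the change of variable $t\mapsto2^{j(s_0-s_1)/2}$; the scaling axioms defining $\mathcal{I}_o(0,1)$ together with $s_0\neq s_1$ make the resulting geometric-type series converge. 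The homogeneous variant is proved identically, extending the sum to $j\in\mathbb{Z}$ and working modulo polynomials.

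For part (ii), the Sobolev Littlewood--Paley norm involves the vector-valued square function $\|(\sum_j2^{js}|\Delta_j^{\vec\phi}f|^2)^{1/2}\|_{L_p}$ rather than an $\ell_q(L_p)$ norm, so the coretraction now lands in the mixed-norm space $L_p\oplus L_p(\ell_2^{s/2})$. Rather than interpolating this directly, I will use a sandwich argument. From the pointwise inclusions $\ell^1\hookrightarrow\ell^2\hookrightarrow\ell^\infty$ and Minkowski's inequality one obtains continuous embeddings
$$
B_{p,1}^{\vec\phi,s}\hookrightarrow H_p^{\vec\phi,s}\hookrightarrow B_{p,\infty}^{\vec\phi,s}
$$
valid for each $s\in\mathbb{R}$. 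Applying $(\cdot,\cdot)_{\psi,q}$ at the endpoints $s_0,s_1$ and invoking part (i) for both the $(B_{p,1}^{s_0},B_{p,1}^{s_1})$ and the $(B_{p,\infty}^{s_0},B_{p,\infty}^{s_1})$ pairs yields the \emph{same} target space $B_{p,q}^{\vec\phi,\psi(s_0,s_1)}$ on both ends of the sandwich. Monotonicity of the interpolation functor in each argument then forces $(H_p^{\vec\phi,s_0},H_p^{\vec\phi,s_1})_{\psi,q}$ to coincide with this common value; the homogeneous Sobolev identity is handled the same way.

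The main obstacle will be the rigorous verification of the weighted-sequence interpolation identity with the generalized parameter $\psi$ in place of the classical $t^\theta$: the classical $K$-functional manipulations must be redone with $\Phi^\psi_q$ in the outer norm, and the absence of clean multiplicative scaling of $\psi$ forces us to rely on the asymptotic bounds built into $\mathcal{I}_o(0,1)$. The condition $s_0\neq s_1$ ensures that the change of variable mapping $t$ to the discrete scale is a genuine scaling, and without it the identification of the interpolation limit with a Besov-type weight would break down.
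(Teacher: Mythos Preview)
Your proposal is correct and follows essentially the same route as the paper: a retraction/coretraction argument reducing part~(i) to the sequence-space identity $(\ell_{q_0}^{s_0}(L_p),\ell_{q_1}^{s_1}(L_p))_{\psi,q}=\ell_q^{\psi(s_0,s_1)}(L_p)$ (which the paper simply cites from \cite[Proposition~A.4]{CLSW2023trace} rather than sketching), followed by a Besov sandwich for part~(ii). The only notable difference is your choice of sandwich $B_{p,1}^{\vec\phi,s}\hookrightarrow H_p^{\vec\phi,s}\hookrightarrow B_{p,\infty}^{\vec\phi,s}$, which is valid for all $p\in(1,\infty)$ at once, whereas the paper uses the $p$-dependent pair $B_{p,p}^{\vec\phi,s}$ and $B_{p,2}^{\vec\phi,s}$ via Minkowski's inequality and splits into the cases $p\le 2$ and $p\ge 2$; your version is marginally cleaner but otherwise equivalent.
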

\begin{proof}
    (i) Consider two maps;
    \begin{equation*}
        \begin{gathered}
            f\mapsto I(f):=(S_0^{\vec{\phi}}f,\{2^{j\gamma/2}\Delta_j^{\vec{\phi}}f\}_{j\in\mathbb{N}}),\\
            \boldsymbol{f}=(f_0,f_1,\cdots)\mapsto P(\boldsymbol{f}):=(S_0^{\vec{\phi}}+\Delta_1^{\vec{\phi}})f_0+(S_0^{\vec{\phi}}+\Delta_1^{\vec{\phi}}+\Delta_{2}^{\vec{\phi}})f_1+\sum_{j=2}^{\infty}(\Delta_{j-1}^{\vec{\phi}}+\Delta_j^{\vec{\phi}}+\Delta_{j+1}^{\vec{\phi}})f_j.
        \end{gathered}
    \end{equation*}
By \eqref{eqn 02.10.14:56}, $PI$ is an identity operator on $B_{p,q}^{\vec{\phi},\gamma}$.
It can be easily checked that $I:B_{p,q}^{\vec{\phi},\gamma}\to\ell_q^{\gamma}(L_p)$ is a linear transformation with
\begin{equation}
\label{25.03.04.14.26}
\|I(f)\|_{\ell_q^{\gamma}(L_p)}= \|f\|_{B_{p,q}^{\vec{\phi},\gamma}}.
\end{equation}
Using
$$
\|\Delta_j^{\vec{\phi}}P(\boldsymbol{f})\|_{L_p}\lesssim \sum_{r=j-2}^{j+2}\|f_r\|_{L_p},
$$
we also have
\begin{equation}
\label{25.03.04.14.25}
    \|P(\boldsymbol{f})\|_{B_{p,q}^{\vec{\phi},\gamma}}\lesssim\|\boldsymbol{f}\|_{\ell_q^{\gamma}(L_p)}.
\end{equation}
Therefore, $P:\ell_q^{\gamma}(L_p)\to B_{p,q}^{\vec{\phi},\gamma}$ is a bounded linear transformation.
By \eqref{25.03.04.14.26},
\begin{align*}
    K(t,I(f);\ell_{q_0}^{s_0},\ell_{q_1}^{s_1})\leq \|I(f^0)\|_{\ell_{q_0}^{s_0}}+t\|I(f^1)\|_{\ell_{q_1}^{s_1}}=\|f^0\|_{B_{p,q_0}^{\vec{\phi},s_0}}+t\|f^1\|_{B_{p,q_1}^{\vec{\phi},s_1}},
\end{align*}
for $f=f^0+f^1$, where $f^0\in B_{p,q_0}^{\vec{\phi},s_0}$ and $f^1\in B_{p,q_1}^{\vec{\phi},s_1}$.
Taking the infimum, we have
\begin{equation}
\label{25.03.14.15.38}
K(t,I(f);\ell_{q_0}^{s_0},\ell_{q_1}^{s_1})\leq K(t,f;B_{p,q_0}^{\vec{\phi},s_0},B_{p,q_1}^{\vec{\phi},s_1}).
\end{equation}
For the converse, consider a pair $(\boldsymbol{f}^0,\boldsymbol{f}^1)\in \ell_{q_0}^{s_0}(L_p)\times\ell_{q_1}^{s_1}(L_p)$ satisfying $I(f)=\boldsymbol{f}^0+\boldsymbol{f}^1$.
Since $PI$ is an identity operator on $B_{p,q}^{\vec{\phi},\gamma}$, we have $f=P(\boldsymbol{f}^0)+P(\boldsymbol{f}^1)$.
By \eqref{25.03.04.14.25},
\begin{equation*}
K(t,f;B_{p,q_0}^{\vec{\phi},s_0},B_{p,q_1}^{\vec{\phi},s_1})\leq \|P(\boldsymbol{f}^0)\|_{B_{p,q_0}^{\vec{\phi},s_0}}+t\|P(\boldsymbol{f}^1)\|_{B_{p,q_1}^{\vec{\phi},s_1}}\lesssim \|\boldsymbol{f}^0\|_{\ell_{q_0}^{s_0}(L_p)}+t\|\boldsymbol{f}^1\|_{\ell_{q_1}^{s_1}(L_p)}.
\end{equation*}
Taking the infimum, we have
\begin{align}\label{eqn 04.28.18:17}
K(t,f;B_{p,q_0}^{\vec{\phi},s_0},B_{p,q_1}^{\vec{\phi},s_1})\lesssim K(t,I(f);\ell_{q_0}^{s_0}(L_{p}),\ell_{q_1}^{s_1}(L_{p})).
\end{align}
Using \eqref{25.03.04.14.26}, \eqref{25.03.14.15.38}, \eqref{eqn 04.28.18:17}, and the fact that $(\ell_{q_0}^{s_0}(L_p),\ell_{q_1}^{s_1}(L_p))_{\psi,q}=\ell_q^{\psi(s_0,s_1)}(L_p)$ (see \cite[Proposition A.4]{CLSW2023trace}), we have
\begin{align*}
\|f\|_{( B^{\vec{\phi},s_{0}}_{p,q_{0}}, B^{\vec{\phi},s_{1}}_{p,q_{1}} )_{\psi,q}} &= \int_{0}^{\infty} \left( \psi(t^{-1}) K(t,f;B^{\vec{\phi},s_{0}}_{p,q_{0}},B^{\vec{\phi},s_{1}}_{p,q_{1}}) \right)^{q} \frac{\mathrm{d}t}{t}
\\
& \simeq \int_{0}^{\infty} \left( \psi(t^{-1}) K(t,I(f);\ell_{q_0}^{s_0}(L_{p}),\ell_{q_1}^{s_1}(L_{p})) \right)^{q} \frac{\mathrm{d}t}{t}
\\
& = \| If \|_{\ell^{\psi(s_{0},s_{1})}_{q}(L_{p})} = \|f\|_{B^{\vec{\phi},s}_{p,q}}. 
\end{align*}
This certainly implies the desired result.
    
(ii) By Proposition \ref{25.02.07.16.36} and Minkowski's inequality, we can check that
\begin{equation*}
    \begin{gathered}
    B_{p,p}^{\vec{\phi},s}\subset H_p^{\vec{\phi},s}\subset B_{p,2}^{\vec{\phi},s}\quad \text{if}\quad 1<p\leq 2,\\
    B_{p,2}^{\vec{\phi},s}\subset H_p^{\vec{\phi},s}\subset B_{p,p}^{\vec{\phi},s}\quad \text{if}\quad p\geq 2.
    \end{gathered}
\end{equation*}
By the definition of generalized interpolation and (i), we have
$$
B_{p,q}^{\vec{\phi},\psi(s_0,s_1)}=(B_{p,p}^{\vec{\phi},s_0},B_{p,p}^{\vec{\phi},s_1})_{\psi,q}\subseteq (H_p^{\vec{\phi},s_0},H_p^{\vec{\phi},s_1})_{\psi,q}\subseteq (B_{p,2}^{\vec{\phi},s_0},B_{p,2}^{\vec{\phi},s_1})_{\psi,q}=B_{p,q}^{\vec{\phi},\psi(s_0,s_1)},\quad \text{if}\quad 1<p\leq 2,
$$
and
$$
B_{p,q}^{\vec{\phi},\psi(s_0,s_1)}=(B_{p,2}^{\vec{\phi},s_0},B_{p,2}^{\vec{\phi},s_1})_{\psi,q}\subseteq (H_p^{\vec{\phi},s_0},H_p^{\vec{\phi},s_1})_{\psi,q}\subseteq (B_{p,p}^{\vec{\phi},s_0},B_{p,p}^{\vec{\phi},s_1})_{\psi,q}=B_{p,q}^{\vec{\phi},\psi(s_0,s_1)},\quad \text{if}\quad p\geq 2.
$$
The proposition is proved.
\end{proof}

\begin{corollary}\label{thm 02.13.13:30}
Let $\alpha \in (0,1]$, $p,p_{0},p_{1} \in [1,\infty]$, $q_{0},q_{1},q\in[1,\infty]$, and $s,s_{0},s_{1} \in \mathbb{R}$. Suppose that $\alpha q >1$ and let $\psi(t) = t^{1/\alpha q}$.
\begin{enumerate}[(i)]
    \item If $s_0\neq s_1$, then
        \begin{align*}
        (B_{p, q_0}^{\vec{\phi},s_0}, B_{p, q_1}^{\vec{\phi},s_1})_{\psi, q} &= B_{p,q}^{\vec{\phi},(s_{1}-s_{0})/\alpha q+s_{0}},\\
        (\mathring{B}_{p, q_0}^{\vec{\phi},s_0}, \mathring{B}_{p, q_1}^{\vec{\phi},s_1})_{\psi, q} &= \mathring{B}_{p,q}^{\vec{\phi},(s_{1}-s_{0})/\alpha q+s_{0}}.
    \end{align*}
    \item If $s_0\neq s_1$, then
        \begin{align*}
    (H_p^{\vec{\phi},s_0}, H_p^{\vec{\phi},s_1})_{\psi, q} &= B_{p,q}^{\vec{\phi},(s_{1}-s_{0})/\alpha q+s_{0}},\\
    (\mathring{H}_p^{\vec{\phi},s_0}, \mathring{H}_p^{\vec{\phi},s_1})_{\psi, q} &= \mathring{B}_{p,q}^{\vec{\phi},(s_{1}-s_{0})/\alpha q+s_{0}}.
\end{align*}
    \item In particular, for $\gamma\in \mathbb{R}$, we have
 \begin{align*}
        (B_{p, q_0}^{\vec{\phi},\gamma+2}, B_{p, q_1}^{\vec{\phi},\gamma})_{\psi, q} &= B_{p,q}^{\vec{\phi},\gamma+2-2/(\alpha q)},\\
        (\mathring{B}_{p, q_0}^{\vec{\phi},\gamma+2}, \mathring{B}_{p, q_1}^{\vec{\phi},\gamma})_{\psi, q} &= \mathring{B}_{p,q}^{\vec{\phi},\gamma+2-2/(\alpha q)},
    \end{align*}
and
\begin{align*}
    (H_p^{\vec{\phi},\gamma+2}, H_p^{\vec{\phi},\gamma})_{\psi, q} &= B_{p,q}^{\vec{\phi},\gamma+2-2/(\alpha q)},\\
    (\mathring{H}_p^{\vec{\phi},\gamma+2}, \mathring{H}_p^{\vec{\phi},\gamma})_{\psi, q} &= \mathring{B}_{p,q}^{\vec{\phi},\gamma+2-2/(\alpha q)}.
\end{align*}
\end{enumerate}
\end{corollary}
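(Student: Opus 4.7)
The strategy is to realize all three assertions as consequences of Proposition~\ref{exs} for the particular choice $\psi(t) = t^{1/\alpha q}$. Only two things need to be verified: that this $\psi$ lies in the class $\mathcal{I}_{o}(0,1)$, and that the abstract smoothness label $\psi(s_0, s_1)$ appearing in Proposition~\ref{exs} coincides with the concrete index $(s_1 - s_0)/(\alpha q) + s_0$ claimed here.

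First I would verify the two scaling conditions defining $\mathcal{I}_{o}(0,1)$. For $\psi(t) = t^{1/\alpha q}$ one has $\sup_{t>0} \psi(\lambda t)/\psi(t) = \lambda^{1/\alpha q}$ independently of $t$, which tends to $0$ as $\lambda \downarrow 0$ since $1/(\alpha q) > 0$. As $\lambda \to \infty$, the ratio
\[
\frac{\psi(\lambda t)}{\lambda\,\psi(t)} = \lambda^{1/\alpha q - 1}
\]
is again independent of $t$ and tends to $0$ precisely because $\alpha q > 1$ forces $1/\alpha q - 1 < 0$. Hence $\psi \in \mathcal{I}_{o}(0,1)$, and this is the single place where the hypothesis $\alpha q > 1$ is used.

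Next I would identify the interpolated smoothness index. By the definition of the norms on $B_{p,q}^{\vec{\phi}, \psi(s_0,s_1)}$ and $\mathring{B}_{p,q}^{\vec{\phi},\psi(s_0,s_1)}$ given in Proposition~\ref{exs}, the dyadic weight is
\[
\psi\bigl(2^{j(s_1-s_0)/2}\bigr)\cdot 2^{j s_0/2} \;=\; 2^{\,j(s_1 - s_0)/(2\alpha q)}\cdot 2^{j s_0/2} \;=\; 2^{\,j[(s_1-s_0)/(\alpha q) + s_0]/2},
\]
so the effective regularity parameter is exactly $(s_1-s_0)/(\alpha q) + s_0$. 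Substituting this identification into Proposition~\ref{exs}(i) yields statement (i); substituting into Proposition~\ref{exs}(ii) yields statement (ii). Statement (iii) is then the specialization $s_0 = \gamma + 2$, $s_1 = \gamma$, for which the index collapses to $\gamma + 2 - 2/(\alpha q)$.

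There is essentially no obstacle in this argument beyond bookkeeping: the heavy lifting has already been carried out in Proposition~\ref{exs} via the retraction/coretraction pair $(I, P)$. The only delicate point is the $\lambda \to \infty$ scaling bound in the first paragraph, which both forces the hypothesis $\alpha q > 1$ and explains why the identification of the optimal initial-data space degenerates in the complementary range $\alpha q \leq 1$ discussed in Remark~\ref{rmk 01.31.16:48}(ii).
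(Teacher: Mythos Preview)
Your proposal is correct and follows exactly the same approach as the paper, which simply states that one need only verify $\psi(t)=t^{1/\alpha q}\in\mathcal{I}_o(0,1)$ by direct computation. You have supplied that computation in full, together with the explicit identification of the dyadic weight, so your argument is a fleshed-out version of the paper's one-line proof.
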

\begin{proof}
We only need to observe that $\psi(t) = t^{1/\alpha q} \in \mathcal{I}_{o}(0,1)$, which can be easily checked by a direct computation.
\end{proof}

We conclude this section with the proof of Theorem \ref{25.03.13.20.52}.

\begin{proof}[Proof of Theorem \ref{25.03.13.20.52}]
It suffices to adapt the framework provided in \cite{CLSW2023trace}.
We first consider the time non-local (\textit{i.e.} $\alpha \in (0,1)$) case.
By setting  $W(t) = t$, $\kappa(t) = t^{-\alpha}/\Gamma(1-\alpha)$ and $\kappa^{\ast}(t) = \kappa^{-1}(t^{-1}) = (\Gamma(1-\alpha)t)^{1/\alpha}$ we obtain $(W\circ\kappa^{\ast})^{1/q}(t) = (\Gamma(1-\alpha)t)^{1/\alpha q}$. 
Applying Corollary \ref{thm 02.13.13:30}, we have
\begin{align*}
(H^{\vec{\phi},\gamma+2}_{p},H^{\vec{\phi},\gamma}_{p})_{(W\circ\kappa^{\ast}),q} = B^{\vec{\phi},\gamma+2-2/\alpha q}_{p,q}.
\end{align*}
Then, statement (i) follows directly from \cite[Theorem 5.3]{CLSW2023trace}.

Moreover, according to \cite[Theorem 1.6]{CLSW2023trace}, for each \(u_0\in B^{\vec{\phi},\gamma+2-2/(\alpha q)}_{p,q}\), there exist \(u \in L_{q}(\mathbb{R}_{+};H^{\vec{\phi},\gamma+2}_{p})\) and \(f \in L_{q}(\mathbb{R}_{+};H^{\vec{\phi},\gamma}_{p})\) satisfying \(\partial_t^{\alpha}(u-u_0)=f\) along with the estimate
\[
\|u\|_{L_{q}(\mathbb{R}_{+};H^{\vec{\phi},\gamma+2}_{p})} + \|f\|_{L_{q}(\mathbb{R}_{+};H^{\vec{\phi},\gamma}_{p})} \leq C \|u_{0}\|_{B^{\vec{\phi},\gamma+2-2/(\alpha q)}_{p,q}},
\]
where the constant $C$ is independent of $u_{0},u,f$.
Since
$$
\|u\|_{\mathbb{H}^{\alpha,\vec{\phi},\gamma}_{q,p}(T)} + \|u\|_{H^{\vec{\phi},\gamma+2}_{q,p}(T)} \leq \|u\|_{L_{q}(\mathbb{R}_{+};H^{\vec{\phi},\gamma+2}_{p})} + \|f\|_{L_{q}(\mathbb{R}_{+};H^{\vec{\phi},\gamma}_{p})}+\|u_0\|_{B^{\vec{\phi},\gamma+2-2/\alpha q}_{p,q}},
$$
statement (ii) immediately follows.
For time local (\textit{i.e.} $\alpha=1$) case, by following the above argument with \cite[Corollary 5.1]{CLSW2023trace} (for (i)) and \cite[Theorem 1.5]{CLSW2023trace} (for (ii)), we prove the theorem. 
\end{proof}

\mysection{Proof of Theorem \ref{main theorem}}

In this section, we prove Theorem \ref{main theorem}. 
Note that due to Proposition \ref{prop 03.21.16:12} (iii), it suffices to prove case $\gamma=0$.

\vspace{1mm}

\noindent{\textbf{Step 1}} (Existence and estimation of solution). 

\textbf{Step 1-1} We consider the case $u_{0}=0$.
For time local case (\textit{i.e.} $\alpha =1$), the theorem is a direct consequence of \cite[Theorem 2.8]{CKP23} with $\vec{a} = \vec{1}$ and $\vec{b} = \vec{b}_{0}$ therein. Hence, we only consider the case $\alpha <1$.
First, assume $f\in C_c^\infty(\bR^{d+1}_+)$.
Then by Theorem \ref{lem 06.24.15:35} a function $u(t,x)$ defined in \eqref{eqn 07.22.16:51} is a solution to equation \eqref{25.03.14.10.36}.
Moreover, $u(t,x)$ is infinitely differentiable in $(t,x)$ and hence $\partial^{\alpha}_{t}u$ exists as a function. Those facts and \eqref{eqn 06.24.16:34} imply  $u\in H^{\vec{\phi},\gamma+2}_{q,p}(T)$ and $\partial^{\alpha}_{t}u = \vec{\phi}\cdot \Delta_{\vec{d}} \, u \in H^{\vec{\phi},\gamma}_{q,p}(T)$, and hence $u\in \mathbb{H}^{\alpha,\vec{\phi},\gamma+2}_{q,p,0}(T)$.
 
Now, we show estimations \eqref{mainestimate1} and \eqref{mainestimate-111}. Take $\eta_{k}=\eta_{k}(t)\in C^{\infty}(\bR)$ such that $0\leq \eta_{k}\leq 1$, $\eta_{k}(t)=1$ for $t\leq T+1/k$ and $\eta_{k}(t)=0$ for $t\geq T+2/k$. Since $f\eta_{k}\in L_{q}(\bR;L_{p}(\bR^{d}))$, and $f(t)=f\eta_{k}(t)$ for $t\leq T$, By Theorem \ref{thm 07.22.11:31}, we have
\begin{equation*}
\begin{aligned}
\|\vec{\phi}\cdot\Delta_{\vec{d}} \, u\|_{\mathbb{L}_{q,p}(T)} &= \|\mathcal{G} f \|_{\mathbb{L}_{q,p}(T)} = \|\mathcal{G}(f\eta_{k})\|_{\mathbb{L}_{q,p}(T)} 
\\
&\leq \|\mathcal{G}(f\eta_{k})\|_{L_{q}(\bR;L_{p}(\bR^{d}))}\leq C \|f\eta_{k}\|_{L_{q}(\bR;L_{p}(\bR^{d}))}.
\end{aligned}
\end{equation*}
Hence, by the dominated convergence theorem, taking $k\to\infty$, we have
$$
\|\vec{\phi}\cdot\Delta_{\vec{d}} \, u\|_{\mathbb{L}_{q,p}(T)} \leq C \|f\|_{\mathbb{L}_{q,p}(T)}.
$$
Also, by Lemma \ref{lem 06.08.14:52} and Minkowski's inequality, we can easily check that
$$
\|u\|_{\mathbb{L}_{q,p}(T)} \leq C(T) \|f\|_{\mathbb{L}_{q,p}(T)}.
$$
Therefore, using the above inequalities and Lemma \ref{eqn 03.25.15:03}, we prove estimations \eqref{mainestimate1} and \eqref{mainestimate-111}.
For general $f$,  we take a sequence of functions   $f_{n}\in \Ccinf(\R^{d+1}_{+})$  such that $f_n \to f$ in $\bL_{q,p}(T)$. Let $u_{n}$  denote the solution with representation  \eqref{eqn 07.22.16:51} with $f_{n}$ in place of $f$. 
Then \eqref{mainestimate1} applied to $u_m-u_n$ shows that $u_{n}$ is a Cauchy sequence in $\bH^{\alpha,\vec{\phi},0}_{q,p,0}(T) \cap H^{\vec{\phi},2}_{q,p}(T)$. 
By taking $u$ as the limit of $u_{n}$ in $\bH^{\alpha,\vec{\phi},2}_{q,p,0}(T) \cap H^{\vec{\phi},2}_{q,p}(T)$, we find that $u$ satisfies \eqref{mainequation1}. Also, the estimations  \eqref{mainestimate1} and \eqref{mainestimate-111} directly follows. 

\textbf{Step 1-2} Now we consider non-trivial initial condition (\textit{i.e.} $u_{0} \neq 0$).

Recall that we consider non-trivial initial condition only when $\alpha q >1$.
Hence, we apply Theorem \ref{25.03.13.20.52} (ii), to obtain $v\in \mathbb{H}^{\alpha,\vec{\phi},0}_{q,p}(T) \cap H^{\vec{\phi},2}_{q,p}(T)$ satisfying
\begin{align*}
\partial^{\alpha}_{t}v = g, \quad t>0, \quad v(0,x) = u_{0},
\end{align*}
with estimation
\begin{align*}
\|v\|_{\mathbb{H}^{\alpha,\vec{\phi},0}_{q,p}(T)} + \|v\|_{H^{\vec{\phi},2}_{q,p}(T)} \leq C \|u_{0}\|_{B^{\alpha,\vec{\phi},2-2/(\alpha q)}_{p,q}}.
\end{align*}
By Step 1-1, there exists a solution $\tilde{v} \in \mathbb{H}^{\alpha,\vec{\phi},0}_{q,p,0}(T) \cap H^{\vec{\phi},2}_{q,p}(T)$  to
\begin{align*}
\partial^{\alpha}_{t}\tilde{v} = \vec{\phi}\cdot\Delta_{\vec{d}}\, \tilde{v} + f - g +\vec{\phi}\cdot \Delta_{\vec{d}}\, v, \quad 0<t<T, \quad \tilde{v}(0,x) = 0.
\end{align*}
One can check that $u=\tilde{v} + v \in \mathbb{H}^{\alpha,\vec{\phi},0}_{q,p}(T) \cap H^{\vec{\phi},2}_{q,p}(T)$ satisfies \eqref{mainequation1} and the desired estimations.

\vspace{1mm}

\noindent\textbf{Step 2} (Uniqueness of solution).  

Let $u,v\in \mathbb{H}^{\alpha,\vec{\phi},0}_{q,p}(T) \cap H^{\vec{\phi},2}_{q,p}(T)$ be solutions to \eqref{mainequation1} with $f\in L_{q,p}(T)$ and $u_{0}\in U^{\alpha.\vec{\phi},2}_{q,p}$. Then $w:=u-v \in \mathbb{H}^{\alpha,\vec{\phi},0}_{q,p,0}(T) \cap H^{\vec{\phi},2}_{q,p}(T)$ satisfies \eqref{mainequation1} with $f=0$ and $u_{0}=0$. 
By Proposition \ref{lem 01.13.15:58} (viii), there exists a sequence $w_{n}\in C^{\infty}_{c}(\bR^{d+1}_{+})$ which converges to $w$ in $\mathbb{H}^{\alpha,\vec{\phi},0}_{q,p}(T) \cap H^{\vec{\phi},2}_{q,p}(T)$. Now define
$$
f_{n} = \partial^{\alpha}_{t}w_{n}- \vec{\phi}\cdot\Delta_{\vec{d}} \, w_{n}.
$$
Making use of  Theorem \ref{lem 06.24.15:35}, we have the representation \eqref{eqn 07.22.16:51} with $f_{n}$. Therefore, Step 1 yields that $w_{n}$ satisfies estimation \eqref{mainestimate1} with $f_{n}$, which converges to $0$ in $L_{q,p}(T)$ due to its definition. Therefore, by taking $n\to\infty$, we deduce that $w=0$, and hence $u=v$ in $\mathbb{H}^{\alpha,\vec{\phi},0}_{q,p}(T) \cap H^{\vec{\phi},2}_{q,p}(T)$. The theorem is proved. \qed

\mysection{Proofs of Proposition \ref{lem 01.13.15:58} and Proposition \ref{prop 03.21.16:12}}
\label{23.03.08.12.24}
In this section, we provide the proof of Proposition \ref{lem 01.13.15:58}.
\begin{proof}[Proof of Proposition \ref{lem 01.13.15:58}]
(i) By the definition of $H^{\vec{\phi},\gamma}_{p}$, there exists a sequence $u_{0n}\in \mathcal{S}(\mathbb{R}^{d})$ which converges to $u_{0}$ in $H^{\vec{\phi},\gamma}_{p}$.
Then we can check 
\begin{align}\label{eqn 01.07.14:41}
I^{1-\alpha}_{t}u_{0n} = \frac{t^{1-\alpha}}{(1-\alpha)\Gamma(1-\alpha)}u_{0n}, \quad \partial_{t}I^{1-\alpha}_{t}u_{0n}= \frac{t^{-\alpha}}{\Gamma(1-\alpha)}u_{0n}.
\end{align}
Since $0<\alpha q<1$, a direct computation to \eqref{eqn 01.07.14:41} implies
\begin{gather}\label{eqn 01.06.15:53}
\|I^{1-\alpha}_{t}u_{0n}\|_{H^{\vec{\phi},\gamma}_{q,p}(T)} \leq C(\alpha,q)T^{(1-\alpha) +1/q} \|u_{0n}\|_{H^{\vec{\phi},\gamma}_{p}},
\\
\|\partial_{t}I^{1-\alpha}_{t}u_{0n}\|_{H^{\vec{\phi},\gamma}_{q,p}(T)} \leq C(\alpha,q)T^{1/q-\alpha} \|u_{0n}\|_{H^{\vec{\phi},\gamma}_{p}},  \nonumber
\end{gather}
and 
\begin{equation*}
\begin{gathered}\|I^{1-\alpha}_{t}(u_{0n}-u_{0m})\|_{H^{\vec{\phi},\gamma}_{q,p}(T)} \leq C(\alpha,q,T) \|u_{0n}-u_{0m}\|_{H^{\vec{\phi},\gamma}_{p}},\\
\|\partial_tI_t^{1-\alpha}(u_{0n}-u_{0m})\|_{H_{q,p}^{\vec{\phi},\gamma}(T)}\leq C(\alpha,q,T)\|u_{0n}-u_{0m}\|_{H_p^{\vec{\phi},\gamma}}.
\end{gathered}
\end{equation*}
These mean that both $I^{1-\alpha}_{t}u_{0n}$ and $\partial_{t}I^{1-\alpha}_{t}u_{0n}$ are Cauchy sequences in $H^{\vec{\phi},\gamma}_{q,p}(T)$. 
Taking $I^{1-\alpha}u_{0}$ and $\partial_{t}I^{1-\alpha}u_{0}$ as the limit of $I^{1-\alpha}_{t}u_{0n}$ and $\partial_{t}I^{1-\alpha}_{t}u_{0n}$ in $H^{\vec{\phi},\gamma}_{q,p}(T)$, we prove that $u_{0} \in \mathbb{H}^{\alpha,\vec{\phi},\gamma}_{q,p,0}(T)$ and \eqref{eqn 01.07.14:11} follows.

(ii) Clearly, $\mathbb{H}^{\alpha,\vec{\phi},\gamma}_{q,p,0}(T) \subset \mathbb{H}^{\alpha,\vec{\phi},\gamma}_{q,p}(T)$ by taking $u_{0}\equiv 0$ in Definition \ref{def 01.06.16:16}-(iii). 
Now suppose that $u\in \mathbb{H}^{\alpha,\vec{\phi},\gamma}_{q,p}(T)$, and let $u_{0}\in H^{\vec{\phi},\gamma}_{p}$ such that $u-u_{0} \in \mathbb{H}^{\alpha,\vec{\phi},\gamma}_{q,p,0}(T)$. Then by (i), $\partial_{t}I^{1-\alpha}_{t}u_{0} \in H^{\vec{\phi},\gamma}_{q,p}(T)$ exists. Hence, we deduce that $u\in \mathbb{H}^{\alpha,\vec{\phi},\gamma}_{q,p,0}(T)$ by taking
$$
f = \partial^{\alpha}_{t}u+\partial_{t}I^{1-\alpha}_{t}u_{0}\in H_{q,p}^{\vec{\phi},\gamma}(T)
$$
which fulfills \eqref{eqn 01.02.17:53}. 

(iii) Let $u_{0n}\in \mathcal{S}(\mathbb{R}^{d})$ which converges to $u_{0}$ in $H^{\vec{\phi},\gamma}_{p}$, then
\begin{equation}
\label{25.03.14.16.45}
\begin{aligned}
& \int_{0}^{T}\int_{\mathbb{R}^{d}} \left(I^{1-\alpha}_{t}(1-\vec{\phi}\cdot\Delta_{\vec{d}})^{\gamma/2}u_{0n}(t,x) \right) \partial_{t}\left((1-\vec{\phi}\cdot\Delta_{\vec{d}})^{-\gamma/2} \eta(t,x) \right) \mathrm{d}x\mathrm{d}t 
\\
=& - \int_{0}^{T}\int_{\mathbb{R}^{d}} \left((1-\vec{\phi}\cdot\Delta_{\vec{d}})^{\gamma/2}\partial_{t}I^{1-\alpha}_{t}u_{0n}(t,x)\right) \left((1-\vec{\phi}\cdot\Delta_{\vec{d}})^{-\gamma/2}\eta(t,x)\right) \mathrm{d}x\mathrm{d}t   
\end{aligned}
\end{equation}
for all $\eta\in C_c^{\infty}([0,T)\times\mathbb{R}^d)$.
By \eqref{eqn 01.06.15:53}, there exists $I_t^{1-\alpha}u_0\in H_{q,p}^{\vec{\phi},\gamma}(T)$, thus the limit of the first term of \eqref{25.03.14.16.45} also exists.
This certainly implies that the limit of the second term of \eqref{25.03.14.16.45} exists.
Since we assume that $\partial_tI_t^{1-\alpha}u_0$ exists in $H_{q,p}^{\vec{\phi},\gamma}(T)$,
\begin{equation*}
\begin{aligned}
&\lim_{n\to\infty}\int_{0}^{T}\int_{\mathbb{R}^{d}} \left((1-\vec{\phi}\cdot\Delta_{\vec{d}})^{\gamma/2}\partial_{t}I^{1-\alpha}_{t}u_{0n}(t,x)\right) \left((1-\vec{\phi}\cdot\Delta_{\vec{d}})^{-\gamma/2}\eta(t,x)\right) \mathrm{d}x\mathrm{d}t\\
=&\int_{0}^{T}\int_{\mathbb{R}^{d}} \left((1-\vec{\phi}\cdot\Delta_{\vec{d}})^{\gamma/2}\partial_{t}I^{1-\alpha}_{t}u_{0}(t,x)\right) \left((1-\vec{\phi}\cdot\Delta_{\vec{d}})^{-\gamma/2}\eta(t,x)\right) \mathrm{d}x\mathrm{d}t.
\end{aligned}
\end{equation*}
Hence, there exists $N\in\mathbb{N}$ such that
\begin{align*}
&\left|\int_{0}^{T}\int_{\mathbb{R}^{d}} \left((1-\vec{\phi}\cdot\Delta_{\vec{d}})^{\gamma/2}\partial_{t}I^{1-\alpha}_{t}u_{0n}(t,x)\right) \left((1-\vec{\phi}\cdot\Delta_{\vec{d}})^{-\gamma/2}\eta(t,x)\right) \mathrm{d}x\mathrm{d}t\right|\\
\leq& 2 \left|\int_{0}^{T}\int_{\mathbb{R}^{d}} \left((1-\vec{\phi}\cdot\Delta_{\vec{d}})^{\gamma/2}\partial_{t}I^{1-\alpha}_{t}u_{0}(t,x)\right) \left((1-\vec{\phi}\cdot\Delta_{\vec{d}})^{-\gamma/2}\eta(t,x)\right) \mathrm{d}x\mathrm{d}t\right|
\end{align*}
for all $n\geq N$.
According to the duality argument,
$$
\|\partial_tI_t^{1-\alpha}u_{0n}\|_{H_{q,p}^{\vec{\phi},\gamma}(T)}\leq 2 \|\partial_tI_t^{1-\alpha}u_{0}\|_{H_{q,p}^{\vec{\phi},\gamma}(T)} \quad \forall\, n \geq N.
$$
However, from \eqref{eqn 01.07.14:41}, $\partial_{t}I^{1-\alpha}_{t}u_{0n}$ fails to exist in $H^{\vec{\phi},\gamma}_{q,p}(T)$ unless $u_{0n}=0$ since $\alpha q  \geq 1$.
Therefore, $u_{0n}=0$, and thus $u_{0}=0$.

(iv) Suppose that $u\in \mathbb{H}^{\alpha,\vec{\phi},\gamma}_{q,p}(T)$ and let $u_{0},v_{0} \in U^{\vec{\phi},\gamma}_{p,q}$ such that $u-u_{0},u-v_{0} \in \mathbb{H}^{\alpha,\vec{\phi},\gamma}_{q,p,0}(T)$. Since $\alpha q \geq 1$, we have $U^{\alpha,\vec{\phi},\gamma}_{p,q} = B^{\vec{\phi},\gamma+2-2/(\alpha q)}_{p,q}  \subset H^{\vec{\phi},\gamma}_{p}$ by Corollary \ref{thm 02.13.13:30} (iii). Hence, $\partial_{t}I^{1-\alpha}_{t}(u_{0}-v_{0})$ exists in $H^{\vec{\phi},\gamma}_{q,p}(T)$,  $u_{0} = v_{0}$ follows due to (iii). The proposition is proved.
\end{proof}

\begin{proof}[Proof of Proposition \ref{prop 03.21.16:12}]
All of the assertions in the proposition for $\alpha=1$ are proved in \cite[Lemma 2.7]{CKP23}. Hence, we only consider the case $\alpha \in (0,1)$.
\\
(i) The definition of $H^{\vec{\phi},\gamma}_{q,p}(T)$ directly yields the statement for it. 
Thus we only consider the space $\mathbb{H}^{\alpha,\vec{\phi},\gamma}_{q,p}(T)$.
It suffices to prove only the completeness.
Suppose that $u_{n} \in \mathbb{H}^{\alpha,\vec{\phi},\gamma}_{q,p}(T)$ is a Cauchy sequence. 
We divide the proof into two cases.

\textbf{Case 1.} $\alpha q <1$. 
\\
In this case, by Proposition \ref{lem 01.13.15:58} (ii), $u_{n}$ is a Cauchy sequence in $\mathbb{H}^{\alpha,\vec{\phi},\gamma}_{q,p,0}(T)$. 
By the definition of $\mathbb{H}^{\alpha,\vec{\phi},\gamma+2}_{q,p,0}(T)$, $u_{n}$ and $\partial^{\alpha}_{t}u_{n}$ are both Cauchy sequences in $H^{\vec{\phi},\gamma}_{q,p}(T)$. Let $u$ and $f$ be the limits of $u_{n}$ and $\partial^{\alpha}_{t}u_{n}$ in $H^{\vec{\phi},\gamma}_{q,p}(T)$. Observe that
\begin{align}\label{eqn 01.13.16:26}
& \int_{0}^{T}\int_{\mathbb{R}^{d}} \left(I^{1-\alpha}_{t}(1-\vec{\phi}\cdot\Delta_{\vec{d}})^{\gamma/2}u_{n}(t,x) \right) \partial_{t}\left((1-\vec{\phi}\cdot\Delta_{\vec{d}})^{-\gamma/2} \eta(t,x) \right) \mathrm{d}x\mathrm{d}t     \nonumber
\\
&\quad = - \int_{0}^{T}\int_{\mathbb{R}^{d}} \left((1-\vec{\phi}\cdot\Delta_{\vec{d}})^{\gamma/2}\partial^{\alpha}_{t}u_{n}(t,x)\right) \left((1-\vec{\phi}\cdot\Delta_{\vec{d}})^{-\gamma/2}\eta(t,x)\right) \mathrm{d}x\mathrm{d}t.
\end{align}
Also by H\"older's inequality, one can check that
\begin{align}\label{eqn 01.13.18:53}
\| I^{1-\alpha}_{t}( 1-\vec{\phi}\cdot \Delta_{\vec{d}} )^{\gamma/2}u_{n}  \|_{L_{q,p}(T)} \leq C(\alpha,q,T) \|u_{n}\|_{H^{\vec{\phi},\gamma}_{q,p}(T)}.
\end{align}
Therefore, by taking limit $n\to\infty$ to both sides of \eqref{eqn 01.13.16:26}, we deduce that $\partial^{\alpha}_{t}u$ exists and equals $f$. This shows that $u_{n}$ converges to $u$ in $\mathbb{H}^{\alpha,\vec{\phi},\gamma}_{q,p}(T)$ by the definition of the norm $\|\cdot\|_{\mathbb{H}^{\alpha,\vec{\phi},\gamma}_{q,p}(T)}$.

\textbf{Case 2.} $\alpha q \geq 1$. 
\\
Let $u_{n0} \in U^{\alpha,\vec{\phi},\gamma}_{p,q}$($\subset H^{\vec{\phi},\gamma}_{p}$) such that $u_{n}-u_{n0} \in \mathbb{H}^{\alpha,\vec{\phi},\gamma}_{q,p,0}(T)$. Then due to the definition of the norm $\| \cdot \|_{\mathbb{H}^{\alpha,\vec{\phi},\gamma}_{q,p}(T)}$, we see that $(u_{n},\partial^{\alpha}_{t}u_{n},u_{n0})$ converge to $(u,f,u_{0})$ in $H_{q,p}^{\vec{\phi},\gamma}(T)\times H_{q,p}^{\vec{\phi},\gamma}(T)\times U_{p,q}^{\alpha,\vec{\phi},\gamma}$. 
Since $U^{\alpha,\vec{\phi},\gamma}_{p,q}$ is a closed subspace of $H^{\vec{\phi},\gamma}_{p}$, we deduce that $u_{0}\in H^{\vec{\phi},\gamma}_{p}$.  Then by following the argument in Case 1, we check that $\partial^{\alpha}_{t}u =f$.
Therefore, $u_{n}$ converges to $u$ in $\mathbb{H}^{\alpha,\vec{\phi},\gamma}_{q,p}(T)$.

(ii) It suffices to show that $u\in \mathbb{H}^{\alpha,\vec{\phi},\gamma}_{q,p,0}(T)$ given that there is a sequence $u_{n} \in \mathbb{H}^{\alpha,\vec{\phi},\gamma}_{q,p,0}(T)$ which converges to $u$ in $\mathbb{H}^{\alpha,\vec{\phi},\gamma}_{q,p}(T)$. Let $u_{0}$ be the element in $U^{\alpha,\vec{\phi},\gamma}_{p,q}$ such that $u-u_{0} \in \mathbb{H}^{\alpha,\vec{\phi},\gamma}_{q,p,0}(T)$.  Let $\varepsilon>0$ be given. Then there exists $n(\varepsilon)$ such that
$$
\|u_{0}\|_{U_{q,p}^{\alpha,\vec{\phi},\gamma}}\leq\| u - u_{n(\varepsilon)} \|_{\mathbb{H}^{\alpha,\vec{\phi},\gamma}_{q,p}(T)} \leq \varepsilon.
$$
Since $\varepsilon>0$ is arbitrary, $u \in \mathbb{H}^{\alpha,\vec{\phi},\gamma}_{q,p,0}(T)$.

(iii) Let $u \in \mathbb{H}^{\alpha,\vec{\phi},\gamma}_{q,p}(T) \cap H^{\vec{\phi},\gamma+2}_{q,p}(T)$ and let $u_{0} \in U^{\alpha,\vec{\phi},\gamma}_{q,p}$ such that $u-u_{0} \in \mathbb{H}^{\alpha,\vec{\phi},\gamma}_{q,p,0}(T)$ (if $u \in \mathbb{H}^{\alpha,\vec{\phi},\gamma}_{q,p,0}(T)$, then put $u_{0}=0$). For simplicity let $\partial^{\alpha}_{t}u=f$. Let $v= (1-\vec{\phi}\cdot \Delta_{\vec{d}})^{\nu/2}u$, $v_{0} = (1-\vec{\phi}\cdot\Delta_{\vec{d}})^{\nu/2}u_{0}$. Then $v \in H^{\vec{\phi},\gamma-\nu+2}_{q,p}(T)$ and $g:=(1-\vec{\phi}\cdot\Delta_{\vec{d}})^{\nu/2}f \in H^{\vec{\phi},\gamma-\nu}_{q,p}(T)$ due to Proposition \ref{H_p^phi,gamma space} (ii). Observe that if we set $\bar{\eta} = (1-\vec{\phi}\cdot\Delta_{\vec{d}})^{\nu/2}\eta$,
\begin{align*}
&\int_{0}^{T} \int_{\mathbb{R}^{d}} \left(I^{1-\alpha}(1-\vec{\phi}\cdot\Delta_{\vec{d}})^{(\gamma-\nu)/2}(v-v_{0})(t,x) \right) \partial_{t}\left( (1-\vec{\phi}\cdot\Delta_{\vec{d}})^{(-\gamma+\nu)/2} \eta(t,x)\right) \mathrm{d}x\mathrm{d}t
\\
& \quad  = \int_{0}^{T} \int_{\mathbb{R}^{d}} \left(I^{1-\alpha}(1-\vec{\phi}\cdot\Delta_{\vec{d}})^{\gamma/2}(u-u_{0})(t,x) \right) \partial_{t}\left( (1-\vec{\phi}\cdot\Delta_{\vec{d}})^{-\gamma/2} \bar{\eta}(t,x)\right) \mathrm{d}x\mathrm{d}t
\\
&\quad  = - \int_{0}^{T} \int_{\mathbb{R}^{d}} (1-\vec{\phi}\cdot\Delta_{\vec{d}})^{\gamma/2}f(t,x) \left( (1-\vec{\phi}\cdot\Delta_{\vec{d}})^{-\gamma/2}  \bar{\eta}(t,x) \right) \mathrm{d}x\mathrm{d}t
\\
&\quad = - \int_{0}^{T} \int_{\mathbb{R}^{d}} (1-\vec{\phi}\cdot\Delta_{\vec{d}})^{(\gamma-\nu)/2}g(t,x)  \left( (1-\vec{\phi}\cdot\Delta_{\vec{d}})^{-(\gamma-\nu)/2} \eta(t,x) \right)\mathrm{d}x\mathrm{d}t.
\end{align*}
This implies that 
$$
\partial^{\alpha}_{t}(1-\vec{\phi}\cdot\Delta_{\vec{d}})^{\nu/2}u=\partial^{\alpha}_{t}v = g = (1-\vec{\phi}\cdot\Delta_{\vec{d}})^{\nu/2}f = (1-\vec{\phi}\cdot\Delta_{\vec{d}})^{\nu/2} \partial^{\alpha}_{t}u.
$$
Also, since $(1-\vec{\phi}\cdot\Delta_{\vec{d}})^{\nu/2}$ is a isometry from $H^{\vec{\phi},s}_{q,p}(T)$ to $H^{\vec{\phi},s-\nu}_{q,p}(T)$ for any $s\in \mathbb{R}$, we see that $\|v\|_{\mathbb{H}^{\alpha,\vec{\phi},\gamma-\nu}_{q,p}(T)} = \|u\|_{\mathbb{H}^{\alpha,\vec{\phi},\gamma}_{q,p}(T)}$. Hence, again using due to Proposition \ref{H_p^phi,gamma space} (ii) we have
\begin{align*}
\|v\|_{\mathbb{H}^{\alpha,\vec{\phi},\gamma-\nu}_{q,p}(T)} + \|v\|_{H^{\vec{\phi},\gamma-\nu+2}_{q,p}(T)} = \|u\|_{\mathbb{H}^{\alpha,\vec{\phi},\gamma}_{q,p}(T)} + \|u\|_{H^{\vec{\phi},\gamma+2}_{q,p}(T)}.
\end{align*}
Thus we prove the assertion.

(iv) Let $u \in \mathbb{H}^{\alpha,\vec{\phi},\gamma}_{q,p,0}(T) \cap H^{\vec{\phi},\gamma+2}_{q,p}(T)$ and let $(1-\vec{\phi}\cdot \Delta_{\vec{d}})^{\gamma/2}u = v$. Extend $v(t,x)\equiv 0$ for $t\notin[0,T]$. Take nonnegative functions $\zeta_{1} \in C^{\infty}_{c}(\mathbb{R}^{d})$, $\eta \in C^{\infty}_{c}((1,2))$ with unit integrals. For $\varepsilon_{1}>0$, define
$$
v^{(\varepsilon_{1})}(t,x) = \int_{0}^{\infty} \int_{\bR^{d}} \eta_{\varepsilon_{1}}(t-s) \zeta_{1,\varepsilon_{1}}(x-y) v(s,y) \mathrm{d}y \mathrm{d}s, \quad \eta_{\varepsilon_{1}}(t) = \varepsilon_{1}^{-1}\eta(t/\varepsilon_{1}), \quad \zeta_{1,\varepsilon_{1}}(x) = \varepsilon_{1}^{-d}\zeta(x/\varepsilon_{1}).
$$
Then $v^{(\varepsilon_{1})} \in L_{q}([0,T];H^{2n}_{p})$ for any $n\in \bN$ (indeed, it is infinitely differentiable in $(t,x)$) and 
$$
v^{(\varepsilon_{1})}(0,x) = 0 \quad \text{for all} \quad t \notin [\varepsilon_{1},T+\varepsilon_{1}], \quad x\in \mathbb{R}^{d}.
$$
Hence, $\partial^{\alpha}_{t}v^{(\varepsilon_{1})}=f^{(\varepsilon_{1})}$ exists and satisfies \eqref{eqn 01.02.17:53}. Thus we can derive the following correspondence to \eqref{eqn 01.13.18:53}
\begin{align}\label{eqn 01.15.14:03}
\|I^{1-\alpha}_{t}v^{(\varepsilon_{1})}\|_{L_{q,p}(T)} \leq C(\alpha,q,T) \|v^{(\varepsilon_{1})}\|_{H^{\vec{\phi},2}_{q,p}(T)}.
\end{align}
Also, $v^{(\varepsilon_{1})} \to v$ in $H^{\vec{\phi},2}_{q,p}(T)$ as $\varepsilon_{1} \downarrow 0$. 
Using this and \eqref{eqn 01.02.17:53}, \eqref{eqn 01.15.14:03} we can check that $f^{(\varepsilon_{1})} \to \partial^{\alpha}_{t}v$ in $L_{q,p}(T)$. This implies that $v^{(\varepsilon_{1})}$ converges to $v$ in $\mathbb{H}^{\alpha,\vec{\phi},0}_{q,p}(T) \cap H^{\vec{\phi},2}_{q,p}(T)$ as $\varepsilon_{1} \downarrow 0$. 

Now take a nonnegative function $\zeta_{2} \in C^{\infty}_{c}(\mathbb{R}^{d})$ such that $\zeta_{2}(x) = 1$ for $|x| \leq 1$ and $\zeta_{2} = 0$ for $|x|>2$. For $\varepsilon_{1},\varepsilon_{2}>0$, define
$$
v^{(\varepsilon_{1},\varepsilon_{2})}(t,x) = \zeta_{2}(\varepsilon_{2} x)v^{(\varepsilon_{1})}(t,x).
$$
Then as $\varepsilon_{2} \downarrow 0$, $v^{(\varepsilon_{1},\varepsilon_{2})}$ converges to $v^{(\varepsilon_{1})}$ in $L_{q}([0,T];H^{2n}_{p})$ for any $n\in \bN$. This deduces $v^{(\varepsilon_{1},\varepsilon_{2})}$ converges to $v^{(\varepsilon_{1})}$ in $H^{\vec{\phi},2}_{q,p}(T)$ as $\varepsilon_{2} \downarrow 0$. Similarly, we also observe that $\partial^{\alpha}_{t}v^{(\varepsilon_{1},\varepsilon_{2})}$ converges to $\partial^{\alpha}_{t}v^{(\varepsilon_{1})}$ in $L_{q,p}(T)$ as $\varepsilon_{2} \downarrow 0$, and thus $v^{(\varepsilon_{1},\varepsilon_{2})}$ converges to $v$ in $\mathbb{H}^{\alpha,\vec{\phi},0}_{q,p}(T) \cap H^{\vec{\phi},2}_{q,p}(T)$ as $\varepsilon_{1},\varepsilon_{2} \downarrow 0$.
Therefore, by (iii), $u^{(\varepsilon_{1},\varepsilon_{2})} = (1-\vec{\phi}\cdot\Delta_{d})^{-\gamma/2}v^{(\varepsilon_{1},\varepsilon_{2})} \in \mathbb{H}^{\alpha,\vec{\phi},\gamma}_{q,p}(T) \cap H^{\vec{\phi},\gamma+2}_{q,p}(T)$ converges to $u$ in $\mathbb{H}^{\alpha,\vec{\phi},\gamma}_{q,p}(T) \cap H^{\vec{\phi},\gamma+2}_{q,p}(T)$ as $\varepsilon_{1},\varepsilon_{2} \downarrow 0$. Since $v^{(\varepsilon_{1},\varepsilon_{2})} \in C^{\infty}_{c}(\mathbb{R}^{d+1}_{+})$, $u^{(\varepsilon_{1},\varepsilon_{2})}$ is also infinitely differentiable in $(t,x)$ and belongs to any $L_{q}([0,T]; H^{2n}_{p})$. Thus if we define
$$
u^{(\varepsilon_{1},\varepsilon_{2},\varepsilon_{3})}(t,x) = \zeta_{2}(\varepsilon_{3}x)u^{(\varepsilon_{1},\varepsilon_{2})}(t,x) \quad \varepsilon_{1},\varepsilon_{2},\varepsilon_{3}>0,
$$
then $u^{(\varepsilon_{1},\varepsilon_{2},\varepsilon_{3})} \in C^{\infty}_{c}(\mathbb{R}^{d+1}_{+})$ and $u^{(\varepsilon_{1},\varepsilon_{2},\varepsilon_{3})}$ converges to $u$ in $\mathbb{H}^{\alpha,\vec{\phi},\gamma}_{q,p}(T) \cap H^{\vec{\phi},\gamma+2}_{q,p}(T)$ as $\varepsilon_{1},\varepsilon_{2},\varepsilon_{3} \downarrow 0$  since $u^{(\varepsilon_{1},\varepsilon_{2},\varepsilon_{3})}$ converges to $u^{(\varepsilon_{1},\varepsilon_{2})}$ in any $L_{q}([0,T];H^{2n}_{p})$ as $\varepsilon_{3} \downarrow 0$.
Therefore, for a given $u\in \mathbb{H}^{\alpha,\vec{\phi},\gamma+2}_{q,p,0}(T)$, by taking proper sequences $a_{n},b_{n},c_{n}>0$ which converges to $0$, we can define a sequence $u_{n}=u^{(a_{n},b_{n},c_{n})} \in C^{\infty}_{c}(\mathbb{R}^{d+1}_{+})$ which converges to $u$ in $\mathbb{H}^{\alpha,\vec{\phi},\gamma+2}_{q,p}(T)$. This proves the assertion.

(v) Let $u\in \mathbb{H}^{\alpha,\vec{\phi},\gamma}_{q,p}(T) \cap H^{\vec{\phi},\gamma+2}_{q,p}(T)$ and let $u_{0} \in U^{\alpha,\vec{\phi},\gamma}_{q,p}$ stand for $u$ to satisfy $u-u_0\in \mathbb{H}^{\alpha,\vec{\phi},\gamma}_{q,p,0}(T)$. By applying a standard mollification argument used in (iv) to $u$ and $u_{0}$ we have sequences $u_{n}$ and $u_{0n}$ such that 
\begin{align*}
\|\partial^{\alpha}_{t}u - \partial^{\alpha}_{t}u_{n}\|_{H^{\vec{\phi},\gamma}_{q,p}(T)} + \|u-u_{n}\|_{H^{\vec{\phi},\gamma+2}_{q,p}(T)} + \|u_{0} - u_{0n} \|_{U^{\alpha,\vec{\phi},\gamma}_{q,p}} \to 0
\end{align*}
as $n\to\infty$ and $u_{n}-u_{0n} \in \mathbb{H}^{\alpha,\vec{\phi},\gamma}_{q,p,0}(T) \cap H^{\vec{\phi},\gamma+2}_{q,p}(T)$. Then by (iv) there exists $v_{n,k} \in C^{\infty}_{c}(\mathbb{R}^{d+1}_{+})$ which converges to $u_{n}-u_{0n}$ in $\mathbb{H}^{\alpha,\vec{\phi},\gamma}_{q,p}(T) \cap H^{\vec{\phi},\gamma+2}_{q,p}(T)$ as $k \to \infty$. Hence if we define $w_{n,k} = v_{n,k} + u_{0n}$ and take a proper subsequence $k(n)$ of $k$, $w_{n,k(n)}$ converges to $u$ in $\mathbb{H}^{\alpha,\vec{\phi},\gamma}_{q,p}(T) \cap H^{\vec{\phi},\gamma+2}_{q,p}(T)$ as $n\to\infty$. The construnction of $w_{n,k(n)}$ directly shows it belongs to $C^{\infty}_{p}([0,T]\times\mathbb{R}^{d})$. 
The proposition is proved.
\end{proof}

\vspace{10mm}

\textbf{Declarations of interest.}
Declarations of interest: none

\textbf{Data Availability.}
Data sharing not applicable to this article as no datasets were generated or analyzed during the current study.

\textbf{Acknowledgements.}
J.-H. Choi has been supported by the National Research Foundation of Korea(NRF) grant funded by the Korea government(ME) (No.RS-2023-00237315).
D. Park has been supported by a 2025 Research Grant from Kangwon National University (grant no.202504300001).
J. Seo has been supported by a KIAS Individual Grant (MG095801) at Korea Institute for Advanced Study.

\end{document}